\newcommand\tsout{\bgroup\markoverwith{\textcolor{red}{\rule[0.5ex]{2pt}{1.4pt}}}\ULon}
\newcommand{\stkout}[1]{\ifmmode\text{\tsout{\ensuremath{#1}}}\else\tsout{#1}\fi}
\theoremstyle{definition}
\newtheorem{theorem}{Theorem}[section]
\newtheorem{lemma}[theorem]{Lemma}
\newtheorem{proposition}[theorem]{Proposition}
\newtheorem{definition}[theorem]{Definition}
\newtheorem{remark}[theorem]{Remark}
\numberwithin{equation}{section}
\newcommand{\bb}{\mathbb}
\newcommand{\dt}{\mathrm{d}t}
\newcommand{\ds}{\mathrm{d}s}
\newcommand{\dW}{\mathrm{d}W}
\newcommand{\divg}{\mathrm{div}\,}
\newcommand{\curl}{\mathrm{curl}\,}
\newcommand{\norm}[1]{\left\|{#1}\right\|}
\newcommand{\inpro}[2]{\left\langle#1,#2\right\rangle}
\newcommand{\abs}[1]{\left|{#1}\right|}
\newcommand{\one}{\mathds{1}}
\def\be{\begin{equation}\label}
\def\ee{\end{equation}}
\def\bd{\begin{definition}\label}
\def\ed{\end{definition}}
\def\bt{\begin{theorem}\label}
\def\et{\end{theorem}}
\def\bl{\begin{lemma}\label}
\def\el{\end{lemma}}
\def\br{\begin{remark}\label}
\def\er{\end{remark}}
\def\bal{\[\begin{aligned}}
\def\eal{\end{aligned}\]}
\begin{document}
\setcounter{page}{1}

\title[Strong solutions for a class of stochastic TMHD-type systems]{Strong solutions for a class of stochastic thermo-magneto-hydrodynamic-type systems with multiplicative noise}

\author[Agus L. Soenjaya]{Agus L. Soenjaya}
\address{School of Mathematics and Statistics, The University of New South Wales, Sydney 2052, Australia}
\email{\textcolor[rgb]{0.00,0.00,0.84}{a.soenjaya@unsw.edu.au}}

\author[Thanh Tran]{Thanh Tran}
\address{School of Mathematics and Statistics, The University of New South Wales, Sydney 2052, Australia}
\email{\textcolor[rgb]{0.00,0.00,0.84}{thanh.tran@unsw.edu.au}}

\date{\today}
\keywords{}
\subjclass{}

\begin{abstract}
    We establish the existence and uniqueness of strong solutions, in both the PDE and probabilistic sense, for a broad class of nonlinear stochastic partial differential equations (SPDEs) on a bounded domain $\mathscr{O}\subset \mathbb{R}^d$ ($d=2,3$), driven by multiplicative Gaussian noise. The solutions are global in time for $d=2$. This theory simultaneously covers several physically relevant systems, including stochastic convective Brinkman--Forchheimer equations, stochastic magnetohydrodynamics (MHD), stochastic B\'enard convection in porous media, stochastic convective dynamo system, stochastic thermo-magneto-micropolar fluids, and stochastic diffusive tropical climate model, for which previous results only provide analytically weak martingale or pathwise solutions. The proof relies on Galerkin approximation and compactness argument. Up to a suitable stopping time, we derive strong moment bounds and verify a Cauchy property for the approximate solutions, in the absence of any inherent cancellation structure. By applying a Gronwall-type lemma for stochastic processes, we establish the existence and uniqueness of maximal strong pathwise solutions, which are global in two spatial dimensions.
\end{abstract}
\maketitle

%\addcontentsline
%\tableofcontents

\section{Introduction}

This paper investigates the existence and uniqueness of \emph{strong} solutions, in the sense of PDEs and probability, for a class of abstract nonlinear stochastic partial differential equations (SPDEs) defined on a bounded domain $\mathscr{O}\subset \bb{R}^d$, where $d=2,3$. These problems are motivated by thermally-coupled models in magneto-hydrodynamics {(MHD)}, which describe the interplay between thermal, magnetic, and fluid dynamic phenomena under stochastic influences. {The problem takes this general form:}

\begin{equation}\label{equ:equation}
	\begin{aligned}
	{\frac{\mathrm{d} \Phi}{\mathrm{d}t}}+ \mathcal{A} \Phi + \mathcal{B}(\Phi,\Phi) + \mathcal{R}(\Phi)
	&=
	g(\Phi) \dot{W},
	\\
	\Phi(0) &= \Phi_0,
	\end{aligned}
\end{equation}
which describes the time evolution of a state vector $\Phi$. Here, $\mathcal{A}$ is a linear operator representing dissipative effects, $\mathcal{B}$ is a bilinear form capturing convective nonlinearities, and $\mathcal{R}$ is a nonlinear operator accounting for other reactive physical effects. The term $g(\Phi) \dot{W}$ represents a multiplicative Gaussian noise modelling random forcing that depends on the state of the system. To cover a range of physically relevant thermo-magneto-hydrodynamical systems, a set of assumptions on the maps $\mathcal{A}$, $\mathcal{B}$, $\mathcal{R}$, and $g$ will be imposed later. These assumptions ensure that \eqref{equ:equation} encompasses several physically relevant models, such as:
\begin{enumerate}
\item the stochastic convective Brinkman–Forchheimer equations~\cite{CaiJiu08, GaoLiu19b, HajRob17, KinMoh23, Moh21};
\item the stochastic magnetohydrodynamic (MHD) equations~\cite{Mot22, SerTem83, SriSun99, TitTra19, Yam16};
\item the stochastic Boussinesq model for B\'enard convection~\cite{ChuMil10, DuaMil09, TitTra24, Yam16};
\item the stochastic convective dynamo system~\cite{CatEmoWei03, CatHug06, Tob21};
\item the stochastic thermo–magneto–micropolar fluid systems~\cite{InoMatOta03, KalLanLuk19, XuQiaZha21, Yam14, Yam16m}; and
\item the stochastic diffusive tropical climate model~\cite{FriMajPau04, LiTi16, Ye17, YinYan23},
\end{enumerate}
among others. These problems will be elaborated further in Section~\ref{sec:examples}.

{The works cited above address a broad range of topics, such as the physical derivations of the models, various notions of well-posedness, existence of invariant measures, asymptotic behaviours, and large deviation principles. In what follows, we restrict our attention to the analytical studies concerning the existence of solutions.
In particular, for many of the problems listed above, the existence of \emph{analytically weak martingale} or \emph{analytically weak pathwise} solutions has been studied in separate works, which we now briefly summarise:}
\begin{enumerate}
\item Stochastic convective Brinkman--Forchheimer equations: Global martingale weak and pathwise weak solutions are shown to exist in~\cite{GaoLiu19b} for $d=2,3$ under Dirichlet boundary condition. 
\item Stochastic MHD equations: For $d=2,3$, the existence of global martingale weak solutions is established in~\cite{SriSun99, Yam16} for bounded domains, and in~\cite{Mot22} for more general Poincar\'e domains. Pathwise uniqueness is also shown for $d=2$, which implies the existence of pathwise weak solutions.
\item Stochastic Boussinesq model for B\'enard convection: Local pathwise weak solutions exist in bounded domains for $d=2,3$, as shown in~\cite{ChuMil10}, and these solutions are global for $d=2$.
\item Stochastic convective dynamo system: To the best of our knowledge, the existence of solutions for this system has not been established yet.
\item Stochastic thermo–magneto–micropolar fluid: Global martingale weak solutions exist for $d=2,3$, as proven in~\cite{Yam14, Yam16m}.
\item Stochastic diffusive tropical climate model: Local pathwise weak solutions exist for two-dimensional unbounded domains, as shown in~\cite{YinYan23}.
\end{enumerate}

In contrast, the existence of \emph{analytically strong pathwise} solutions for these problems in bounded domains with physically relevant boundary conditions is currently unknown. It is worth noting that for the stochastic convective Brinkman--Forchheimer equations, the existence and uniqueness of probabilistically and analytically strong solutions on a torus (or domains with periodic boundary conditions) are established in~\cite{KinMoh23, Moh21}. For the case of Dirichlet boundary condition, \cite{LiKim24, Moh21} remark that the arguments in~\cite{GaoLiu19, LiuGao18} contain a gap due to the use of some incorrect identities in several parts of the proof. Moreover, the authors implicitly assume throughout the paper that the Leray projector commutes with the Laplacian, which does not hold under Dirichlet boundary conditions. Consequently, the corresponding existence result for the Dirichlet case is invalid.

The primary focus of this work is the rigorous proof of the existence of analytically strong pathwise solutions for a wide class of thermally-coupled stochastic MHD-type systems for $d=2,3$, with solutions being global in time for $d=2$. To the best of our knowledge, this provides the first strong solution theory for several of these stochastic nonlinear systems, which were previously understood only in the weak sense or not at all. In addition, we develop an abstract framework that unifies the treatment of these models, encompassing both previously studied systems and new physically relevant stochastic nonlinear systems that have not been analysed rigorously before.

The proofs are based on Galerkin approximations combined with compactness arguments, but the analysis requires substantially more delicate estimates than in previous works. In particular, following~\cite{GlaZia09}, we derive strong moment bounds and establish a Cauchy property for the approximate solutions up to a suitable stopping time, despite the absence of any inherent cancellation structure in the equations. The final step employs a Gronwall-type lemma for stochastic processes, which allows us to conclude the existence and uniqueness of maximal strong pathwise solutions, with global-in-time existence guaranteed for $d=2$.

This paper is organised as follows:
\begin{itemize}
    \item Section~\ref{sec:prelim} sets up the functional framework in which the problem is posed and summarises the standing assumptions used throughout the paper.
    \item Section~\ref{sec:examples} presents several nonlinear physical models that satisfy the assumptions outlined in Section~\ref{sec:prelim}.
    \item Section~\ref{sec:loc sol} establishes the existence and uniqueness of a local strong pathwise solution to problem~\eqref{equ:equation}.
    \item Section~\ref{sec:max sol} extends the local strong solution to a maximal (random) time interval for $d=2,3$, and shows that this maximal strong solution is global when $d=2$.
    \item Auxiliary results used in the paper are collected in Appendix~\ref{sec:app}.
\end{itemize}
Our main results are the existence of a unique maximal strong pathwise solution for {$d=2,3$} (Theorem~\ref{the:max strong sol}) and the existence of a global strong pathwise solution for $d=2$ (Theorem~\ref{the:global strong 2d}).

\section{Preliminaries}\label{sec:prelim}

\subsection{Notations}\label{sec:notation}

We begin by defining some notations used in this paper. For $p\in [1,\infty]$ and $k\in \bb{N}$, the function spaces $L^p$ and $W^{k,p}$ denote, respectively, the Lebesgue space of $p$-th integrable functions and the Sobolev space of $L^p$ functions whose weak derivatives up to order $k$ also belong to $L^p$, respectively. As usual, we set $H^k := W^{k,2}$.

{If $X$ is a Banach space, $L^p(0,T; X)$ and $W^{k,p}(0,T;X)$ denote, respectively, the usual Lebesgue and Sobolev spaces of strongly measurable functions on $(0,T)$ taking values in $X$. Similarly, the space $L^p(\Omega; X)$ denotes the space of $X$-valued strongly measurable random variables with finite $p$-th moment, where $(\Omega,\mathcal{F},\bb{P})$ is a probability space. The space $C([0,T];X)$ and $C_\mathrm{w}([0,T]; X)$ denote the spaces of functions on $[0,T]$ taking values in $X$ that are continuous with respect to the strong and weak topologies of $X$, respectively.}

In the analysis, the constant $C$ denotes a
generic constant which may take different values at different occurrences. If the dependence of $C$ on a variable, e.g.~$s$, is highlighted, we will write $C_s$.

\subsection{Functional setting}
\label{subsec:fun set}
Let $(\mathcal{H}, \abs{\cdot})$ be a separable Hilbert space {and let} $\mathcal{A}$ be an unbounded, self-adjoint, positive linear operator on $\mathcal{H}$, defined on a domain $\mathrm{D}(\mathcal{A})\subset \mathcal{H}$, such that $\mathcal{A}$ is a bijection onto $\mathcal{H}$ with a compact inverse. Consequently, there exists an orthonormal basis $\{e_k\}_{k\in\bb{N}}$ for $\mathcal{H}$ consisting of eigenfunctions of $\mathcal{A}$. The associated positive eigenvalues $\{\lambda_k\}_{k\in\bb{N}}$ form an unbounded increasing sequence. Given $\alpha>0$, let
\begin{align*}
	\mathrm{D}(\mathcal{A}^\alpha) = \left\{v\in \mathcal{H} : \sum_{k=1}^\infty \lambda_k^{2\alpha} \abs{\inpro{v}{e_k}_\mathcal{H}}^2 < \infty\right\}.
\end{align*} 
For $v\in \mathrm{D}(\mathcal{A}^\alpha)$, we define
\[
\mathcal{A}^\alpha v:= \sum_{k=1}^\infty \lambda_k^{\alpha} \inpro{v}{e_k} e_k
\quad\text{and}\quad
\abs{v}_{\mathrm{D}(\mathcal{A}^\alpha)}^2:= \abs{\mathcal{A}^{\alpha} v}^2 = \sum_{k=1}^\infty \lambda_k^{2\alpha} \abs{\inpro{v}{e_k}_\mathcal{H}}^2.
\]

Let $\mathcal{V}=\mathrm{D}(\mathcal{A}^{\frac12})$, equipped with the norm $\norm{v}:= \abs{v}_{\mathrm{D}(\mathcal{A}^\frac12)}$ for any $v\in\mathcal{V}$. Let $\mathcal{V}'$ denote the dual of $\mathcal{V}$ with respect to the inner product $\inpro{\cdot}{\cdot}_\mathcal{H}$ of $\mathcal{H}$. Thus, we have a quartet of compactly embedded Hilbert spaces $\mathrm{D}(\mathcal{A}) \subset \mathcal{V}\subset \mathcal{H} \subset \mathcal{V}'$. Let $\inpro{u}{v}$ denote the duality pairing between $u\in \mathcal{V}$ and $v\in \mathcal{V}'$, such that $\inpro{u}{v}=\inpro{u}{v}_{\mathcal{H}}$ whenever $u\in\mathcal{V}$ and $v\in \mathcal{H}$.

\subsection{Assumptions}\label{sec:assum}

We assume that the bilinear form $\mathcal{B}$ satisfies the following conditions.
\\[1ex]
\textbf{Condition (B)}:
\begin{itemize}
	\item $\mathcal{B}:\mathcal{V}\times \mathcal{V} \to \mathcal{V}'$ is a bilinear continuous map.
	\item For any $v_1,v_2,v_3 \in \mathcal{V}$,
	\begin{equation}\label{equ:B antisym}
		\inpro{\mathcal{B}(v_1,v_2)}{v_3} = -\inpro{\mathcal{B}(v_1,v_3)}{v_2}.
	\end{equation}
	%\item there exists $C>0$ such that for any $v_1,v_2,v_3\in \mathcal{V}$,
	%\begin{equation}\label{equ:est B without A}
	%	\abs{\inpro{\mathcal{B}(v_1,v_2)}{v_3}}
	%	\leq
	%	\begin{cases}
	%		C \abs{v_1}^{\frac12} \norm{v_1}^{\frac12} \norm{v_2} \abs{v_3}^{\frac12} \norm{v_3}^{\frac12}, &\text{if $d=2$},
	%		\\[1ex]
	%		C \abs{v_1}^{\frac12} \norm{v_1}^{\frac12} \norm{v_2} \norm{v_3}, &\text{if $d\le 3$};
	%	\end{cases}
	%\end{equation}
	\item For any $v_1,v_2,v_3\in \mathcal{V}$,
	\begin{equation}\label{equ:est B 3d alt}
		\abs{\inpro{\mathcal{B}(v_1,v_2)}{v_3}}
		\leq
		C \norm{v_1} \norm{v_2} \abs{v_3}^{\frac12} \norm{v_3}^{\frac12}.%, \quad \text{ if $d\le 3$};
	\end{equation}
	\item There exists $C>0$ such that for any $v_1, v_3\in \mathcal{V}$ and $v_2\in \mathrm{D}(\mathcal{A})$,
	\begin{equation}\label{equ:est B with A}
		\abs{\inpro{\mathcal{B}(v_1,v_2)}{v_3}}
		\leq
		\begin{cases}
			C \abs{v_1}^{\frac12} \norm{v_1}^{\frac12} \norm{v_2}^{\frac12} \abs{\mathcal{A}v_2}^{\frac12} \abs{v_3}, &\text{if $d=2$},
			\\[1ex]
			C \norm{v_1} \norm{v_2}^{\frac12} \abs{\mathcal{A} v_2}^{\frac12} \abs{v_3}, &\text{if $d\le 3$}.
		\end{cases}
	\end{equation}
	\item $\mathcal{B}(v,v)\in \mathrm{D}(\mathcal{A}^{\frac18})$ for all $v\in \mathrm{D}(\mathcal{A})$, and there exists $C>0$ such that for any $v\in \mathrm{D}(\mathcal{A})$,
	\begin{align}
		\label{equ:H norm B}
		\abs{\mathcal{B}(v,v)}^2
		&\leq
		C \norm{v}^3 \abs{\mathcal{A} v},
		\\
		\label{equ:V norm B}
		\abs{\mathcal{B}(v,v)}_{\mathrm{D}(\mathcal{A}^{\frac18})}^2
		&\leq
		C \norm{v}^2 \abs{\mathcal{A} v}^2.
	\end{align}
\end{itemize}
In particular, the antisymmetry property \eqref{equ:B antisym} implies $\inpro{\mathcal{B}(v_1,v_2)}{v_2}=0$ for any $v_1,v_2\in \mathcal{V}$. Subsequently, we will write $\mathcal{B}(v):= \mathcal{B}(v,v)$ for simplicity.
\\

We assume that the nonlinear operator $\mathcal{R}$ satisfies the following conditions.
\\[1ex]
\textbf{Condition (R)}:
\begin{itemize}
	\item {$\mathcal{R}:\mathcal{V}\to \mathcal{H}$ is a continuous map defined by $\mathcal{R}=\mathcal{S}+\mathcal{F}$ where $\mathcal{S}, \mathcal{F}:\mathcal{V}\to\mathcal{H}$ satisfy, for any $v\in\mathcal{V}$,}
	\begin{equation}\label{equ:Rv v pos}
		\inpro{\mathcal{S}(v)}{v} \geq 0 \;\text{ and }\; \abs{\mathcal{F}(v)} \leq C\left(1+\norm{v}\right).
	\end{equation}
    %\item $\mathcal{R}:\mathcal{V}\to \mathcal{V}'$ is a continuous map defined by \stkout{$\mathcal{R}(v)= \mathcal{S}(v)+ \mathcal{F}(v)$ satisfying $\mathcal{F}(v)\in \mathcal{H}$, such that for any $v\in \mathcal{V}$,} {$\mathcal{R}=\mathcal{S}+\mathcal{F}$ where $\mathcal{S}:\mathcal{V}\to\mathcal{V}'$ and $\mathcal{F}:\mathcal{V}\to\mathcal{H}$ satisfy, for any $v\in\mathcal{V}$,}
	%\begin{equation*}\label{equ:Rv v pos}
		%\inpro{\mathcal{S}(v)}{v} \geq 0 \;\text{ and }\; \abs{\mathcal{F}(v)} \leq C\left(1+\norm{v}\right).
	%\end{equation*}
    %{Moreover, if $v\in\mathrm{D}(\mathcal{A})$ then %$\mathcal{S}(v)\in\mathcal{H}.$}
	\item There exists $C>0$ such that for any $v\in \mathcal{V}$,
    \begin{align}\label{equ:est R v in v}
		\abs{\mathcal{R}(v)}
		&\leq
		C \left(1+ \norm{v}^2 \right) \norm{v}.
    \end{align}
    Moreover, for any $v\in \mathrm{D}(\mathcal{A})$,
    \begin{equation}\label{equ:est Rv1 v2}
		\abs{\mathcal{R}(v)}
		\leq
		\begin{cases}
			C\left(1+ \abs{v}^{\frac32} \abs{\mathcal{A}v}^{\frac12} \right) \norm{v}, &\text{if $d=2$},
			\\[1ex]
			C \left(1+ \abs{v}^{\frac12} \norm{v} \abs{\mathcal{A}v}^{\frac12}\right) \norm{v}, &\text{if $d\le 3$}.
		\end{cases}
	\end{equation}
	%\begin{equation}\label{equ:est Rv1 v2}
	%	\abs{\inpro{\mathcal{R}(v_1)}{v_2}}
	%	\leq
	%	\begin{cases}
	%		C\left(1+ \abs{v_1}^{\frac32} \abs{\mathcal{A}v_1}^{\frac12} \right) \norm{v_1} \abs{v_2}, &\text{if $d=2$},
	%		\\[1ex]
	%		C \left(1+ \abs{v_1}^{\frac12} \norm{v_1} \abs{\mathcal{A}v_1}^{\frac12}\right) \norm{v_1} \abs{v_2}, &\text{if $d\le 3$}.
	%	\end{cases}
	%\end{equation}
	\item There exists $C>0$ such that for any {$v_1,v_2\in\mathcal{V}$}
	\begin{equation}\label{equ:est Rv1 minus v2}
        \abs{\mathcal{R}(v_1)-\mathcal{R}(v_2)}_{\mathcal{H}} \le C\left(1+\norm{v_1}^2+\norm{v_2}^2\right) \norm{v_1-v_2}
	\end{equation}
	\item $\mathcal{R}(v)\in \mathrm{D}(\mathcal{A}^{\frac18})$ for all $v\in \mathrm{D}(\mathcal{A})$, and there exists $C>0$ such that for any $v\in \mathrm{D}(\mathcal{A})$,
    %satisfying $\mathcal{R}(v)\in \mathrm{D}(\mathcal{A}^{\frac18})$,
	\begin{align}\label{equ:est R vv}
        \abs{\mathcal{R}(v)}_{\mathrm{D}(\mathcal{A}^\frac18)}^2
		&\leq
		C \left(1+ \norm{v}^4 \right) \norm{v} \abs{\mathcal{A}v}.
	\end{align}
	%\item there exists $C>0$ such that for any $v\in \mathrm{D}(\mathcal{A})$,
	%\begin{equation}\label{equ:est R vv}
	%	\norm{\mathcal{R}(v)}^2
	%	\leq
	%	C \left(1+ \norm{v}^4 \right) \abs{\mathcal{A}v}^2, \;\text{ for $d \le 3$}.
	%\end{equation}
\end{itemize}

\vspace{1ex}
The stochastic term in \eqref{equ:equation} can be written formally as
\begin{equation}\label{equ:g form}
	g(\Phi) \dot{W}= \sum_{k=1}^\infty g_k(\Phi) \dot{\beta}_k,
\end{equation}
where $\beta_k$ are independent standard Brownian motions. We need to impose some assumptions on $g= \{g_k\}_{k\in \bb{N}}$. To this end, we first define the following notion: for any normed spaces $Y$ and $Z$, a progressively measurable map $h:\Omega\times [0,\infty)\times Y\to Z$ is said to be uniformly Lipschitz with constant $L_Y$ if, for all $u,v\in Y$ and $(\omega,t)\in \Omega\times[0,\infty)$,
\begin{align*}
	\abs{h(\omega,t,u)-h(\omega,t,v)}_{Z} \leq L_Y \abs{u-v}_Y
%\end{align*}
\quad\text{and}\quad
%\begin{align*}
	\abs{h(\omega,t,u)}_{Z} \leq L_Y \left(1+\abs{u}_Y\right).
\end{align*}
The collection of all such mappings is denoted by $\text{Lip}(Y, Z)$.

For our analysis, we shall assume the following condition on $g$. 
\\[1ex]
\textbf{Condition (G)}:
A progressively measurable map $g:\Omega\times [0,\infty)\times \mathcal{H} \to \ell^2(\mathcal{H})$ is given such that
\begin{align}\label{equ:cond G}
	g\in \text{Lip}\big(\mathcal{H}, \ell^2(\mathcal{H})\big) \cap \text{Lip}\big(\mathcal{V}, \ell^2(\mathcal{V})\big) \cap \text{Lip}\big(\mathrm{D}(\mathcal{A}), \ell^2(\mathrm{D}(\mathcal{A}))\big).
\end{align}

\section{Physical models}\label{sec:examples}

We now present several physically relevant models that fall within the scope of our framework. We note, however, that our assumptions are not limited to these thermo-magneto-fluid models and can accommodate a broader class of nonlinear stochastic PDEs arising in various contexts.

{In the following sections, $\mathscr{O}\subset \bb{R}^d$, for $d=2,3$, denotes an open bounded domain with $C^2$-smooth boundary or a convex Lipschitz domain.}

\subsection{Stochastic convective Brinkman--Forchheimer equations}\label{sec:scbf}

The stochastic convective Brinkman--Forchheimer (or damped Navier--Stokes) equations~\cite{CaiJiu08, HajRob17, KinMoh23, Moh21} with Dirichlet (no-slip) boundary conditions read:
\begin{subequations}\label{equ:scbf}
	\begin{alignat}{2}
		&\partial_t u - \nu \Delta u + (u\cdot\nabla)u + \alpha u +\beta |u|^{r-1} u + \nabla p = g(u)\dot{W}
		\; && \quad\text{in $\mathscr{O}$,}
		\label{equ:scbf a}
		\\[1ex]
		&\divg u = 0
		\; && \quad\text{in $\mathscr{O}$,}
		\label{equ:scbf b}
		\\[1ex]
		&u(0,x)= u_0(x) 
		\; && \quad\text{in $\mathscr{O}$,}
		\label{equ:scbf c}
		\\[1ex]
		&u = 0
		\; && \quad\text{on $\partial\mathscr{O}$,}
		\label{equ:scbf d}
	\end{alignat}
\end{subequations}
where $r\in [2,3]$ and $g$ satisfies~\eqref{equ:cond G}. Here, $u:\mathscr{O}\to \bb{R}^d$ is the fluid velocity and $p:\mathscr{O}\to \bb{R}$ is the pressure. The positive coefficients $\nu$, $\alpha$, and $\beta$ respectively denote the kinematic viscosity (Brinkman coefficient), the Darcy coefficient (permeability of porous medium), and the Forchheimer coefficient. The system~\eqref{equ:scbf} describes the flow of a viscous fluid at sufficiently high velocity and moderate porosity, perturbed by a stochastic forcing term $g(u)\dot{W}$, where $g$ satisfies condition (G) in \eqref{equ:cond G}.

The existence and uniqueness of probabilistically and analytically strong solutions to \eqref{equ:scbf} {on a} torus (or a domain with periodic boundary conditions) are established in \cite{KinMoh23, Moh21}. For the case of Dirichlet boundary condition, a remark is made in~\cite{LiKim24, Moh21} that the arguments presented in~\cite{GaoLiu19, LiuGao18} contain a gap stemming from the use of an incorrect identity in various parts of the proof, which invalidates the corresponding existence result. In this work, we correct these mistakes and establish the existence of pathwise strong solutions for $r\in [2,3]$, including the case of Dirichlet boundary condition. When $d=2$, the solutions are shown to exist globally in time. Our results also extend to the case of Navier-type (or free) boundary conditions~\cite{AmrRej14, Zia98}.

Define the spaces:
\begin{align*}
    \mathcal{V} &:= \{u\in H^1_0(\mathscr{O}): \divg u=0\},
    \\
    \mathcal{H} &:= \{u\in L^2(\mathscr{O}) : \divg u=0 \text{ and } \left. u\cdot n \right|_{\partial \mathscr{O}}= 0\}.
\end{align*}
In the definition of $\mathcal{H}$ here and in what follows, the condition $\divg u=0$ is understood in the sense of distributions, while the trace of the normal component of $u$ on the boundary is interpreted in a suitable generalised sense (see~\cite[Chapter I, Section~1.3]{Tem01} and~\cite[Section~III.2]{Gal11}).
By means of the Helmholtz--Leray projector $\bb{P}:L^2(\mathscr{O}) \to \mathcal{H}$, noting that $\bb{P}v=v$ and $\bb{P}[g(v)\dot{W}]= \sum_{k=1}^\infty g_k(v)\dot{\beta}_k$ due to~\eqref{equ:g form} and \eqref{equ:cond G}, we can project \eqref{equ:scbf} onto the space of divergence-free vector fields to formulate problem \eqref{equ:scbf} in the form \eqref{equ:equation}, with
\begin{alignat}{2}
    &\mathcal{A}v =
    -\nu\bb{P}\Delta v,
    &\quad &\forall v\in\mathrm{D}(\mathcal{A}),
    \label{equ:A v}
    \\
    \label{equ:B v1 v2}
    &\mathcal{B}(v_1,v_2) = \bb{P}\big[(v_1\cdot\nabla)v_2\big],&\quad &\forall v_1,v_2 \in \mathcal{V},
    \\
    \label{equ:Rv scbf}
    &\mathcal{R}(v)
    = \alpha v+ \beta \bb{P} \big[ |v|^{r-1} v \big], &\quad &\forall v\in \mathcal{V},
\end{alignat}
where $\mathrm{D}(\mathcal{A}) := H^2(\mathscr{O})\cap \mathcal{V}$.
The operator $\mathcal{A}:\mathrm{D}(\mathcal{A})\to\mathcal{H}$ is called the \emph{Dirichlet--Stokes operator}.

The map $\mathcal{B}$ satisfies \eqref{equ:B antisym} by the definition of $\bb{P}$ and simple calculations. It also satisfies inequalities \eqref{equ:est B 3d alt} and \eqref{equ:est B with A} by a standard argument using H\"older's inequality and Sobolev embedding (see~\cite{Tem01}). Furthermore, by the boundedness of $\bb{P}$ and H\"older's inequality (with exponents $p=q=4$ when $d=2$, and $p=6,q=3$ when $d=3$), we have
\begin{align}\label{equ:Bv L2}
    \abs{\mathcal{B}(v,v)}^2 \leq \abs{v}_{L^p}^2 \abs{\nabla v}_{L^q}^2 \leq C \big(\abs{v} \norm{v}\big) \big(\norm{v} \abs{\mathcal{A}v} \big) \leq
    C\norm{v}^3 \abs{\mathcal{A} v},
\end{align}
where in the penultimate step we used the Gagliardo--Nirenberg inequalities, thus showing~\eqref{equ:H norm B}. To show \eqref{equ:V norm B}, we first note that $(v\cdot\nabla)v\in H^1(\mathscr{O})$ for any $v\in \mathrm{D}(\mathcal{A})$, since by the H\"older and the Gagliardo--Nirenberg inequalities,
\begin{align}\label{equ:v grad v H1}
    \abs{(v\cdot\nabla)v}_{H^1}^2
    &\leq
    \abs{v}_{L^4}^2 \abs{\nabla v}_{L^4}^2
    +
    \abs{v}_{L^\infty}^2 \abs{\mathcal{A}v}^2 + \abs{\nabla v}_{L^p}^2 \abs{\nabla v}_{L^q}^2 
    \leq
    C \norm{v} \abs{\mathcal{A} v}^3.
\end{align}
In particular, this implies $\mathcal{B}(v,v)\in \mathrm{D}(\mathcal{A}^{\frac18})$ by the boundedness of $\bb{P}$ in $H^{\frac14}(\mathscr{O})$ and the characterisation of the domains of fractional powers of the Dirichlet--Stokes operator (see~\cite{FefHajRob22, KunWei17}).
Furthermore, by interpolation, we infer that
\begin{align}\label{equ:Bvv A18}
    \abs{\mathcal{B}(v,v)}_{\mathrm{D}(\mathcal{A}^{\frac18})}^2 
    &\leq
    \abs{(v\cdot\nabla)v}_{H^{\frac12}}^2 
    \leq 
    \abs{(v\cdot\nabla)v}_{L^2}
    \abs{(v\cdot\nabla)v}_{H^1}
    \leq
    C \norm{v}^2 \abs{\mathcal{A} v}^2,
\end{align}
where in the last step we used \eqref{equ:Bv L2} and \eqref{equ:v grad v H1}. This proves \eqref{equ:V norm B}. Thus, condition (B) in Section~\ref{sec:assum} is satisfied by the map $\mathcal{B}$.

The map $\mathcal{R}$ clearly satisfies \eqref{equ:Rv v pos} with $\mathcal{F}(v)=0$. It remains to show that the map satisfies \eqref{equ:est R v in v}, \eqref{equ:est Rv1 v2}, \eqref{equ:est Rv1 minus v2}, and \eqref{equ:est R vv}. For ease of presentation, we assume $r=3$, noting that similar argument works for $r\leq 3$. First, for any $v\in\mathcal{V}$, we have by H\"older's inequality and the Sobolev embedding,
\begin{align}\label{equ:Rv L2}
    \abs{\mathcal{R}(v)}^2
    \leq
    C\left(\abs{v}^2+ \abs{v}_{L^6}^6 \right)
    \leq 
    C\left(1+\norm{v}^4\right) \norm{v}^2,
\end{align}
thus showing~\eqref{equ:est R v in v}. Next, we show \eqref{equ:est Rv1 v2}. By the H\"older inequality (with exponents $p=q=4$ when $d=2$, and $p=6,q=3$ when $d=3$) and the Gagliardo--Nirenberg inequalities, we have for any $v_1,v_2\in \mathcal{V}$,
\begin{align*}
    \abs{\inpro{\mathcal{R}(v_1)}{v_2}}
    &\leq
    C \left(\abs{v_1}_{L^2}+ \abs{v_1}_{L^\infty} \abs{v_1}_{L^p} \abs{v_1}_{L^q} \right) \abs{v_2}_{L^2}
    \\
    &\leq
    \begin{cases}
        C \left(\norm{v_1} + \abs{v_1}^{\frac12} \abs{\mathcal{A}v_1}^{\frac12} \abs{v_1} \norm{v_1} \right) \abs{v_2}, \; &\text{ if $d=2$,}
        \\[1ex]
        C \left(\norm{v_1} + \norm{v_1}^{\frac12} \abs{\mathcal{A}v_1}^{\frac12} \norm{v_1} \abs{v_1}^{\frac12} \norm{v_1}^{\frac12} \right) \abs{v_2},\; &\text{ if $d=3$},
    \end{cases}
\end{align*}
as required. By a similar argument, for $d\leq 3$ and any $v_1,v_2\in \mathcal{V}$ and $v_3\in \mathcal{H}$, we obtain
\begin{align*}
    \abs{\mathcal{R}(v_1)-\mathcal{R}(v_2)}
    &\leq
    \left(\abs{v_1}_{L^6}^2 + \abs{v_2}_{L^6} \abs{v_1+v_2}_{L^6} \right) \abs{v_1-v_2}_{L^6}
    \\
    &\leq
    C\left(1+\norm{v_1}^2 +\norm{v_2}^2 \right) \norm{v_1-v_2}
\end{align*}
by the Sobolev embedding $\mathcal{V}\hookrightarrow L^6$ and Young's inequality. This shows~\eqref{equ:est Rv1 minus v2}. Finally, we will show~\eqref{equ:est R vv}. To this end, note that $\abs{v}^2 v\in H^1(\mathscr{O})$ for any $v\in \mathrm{D}(\mathcal{A})$ since by H\"older's inequality and the Sobolev embedding,
\begin{align}\label{equ:Rv H1}
    \abs{\abs{v}^2 v}_{H^1} \leq 
    \abs{v}_{L^6}^3 + \abs{v}_{L^6}^2 \abs{\nabla v}_{L^6}
    \leq
    C\norm{v}^2 \abs{\mathcal{A} v}.
\end{align}
Hence, by the boundedness of $\bb{P}$ in $\mathrm{D}(\mathcal{A}^{\frac18})$ and interpolation, noting \eqref{equ:Rv L2} and \eqref{equ:Rv H1}, we obtain
\begin{align}\label{equ:Rv A18}
    \abs{\mathcal{R}(v)}_{\mathrm{D}(\mathcal{A}^{\frac18})}^2
    \leq
    C\abs{v}_{H^{\frac12}}^2 + C\abs{ \abs{v}^2 v}_{H^{\frac12}}^2
    &\leq
    C\abs{v} \norm{v} + C \abs{ \abs{v}^2 v}_{L^2} \abs{ \abs{v}^2 v}_{H^1}
    \nonumber \\
    &\leq
    C\left(1+\norm{v}^4 \right) \norm{v} \abs{\mathcal{A} v},
\end{align}
proving \eqref{equ:est R vv}. This shows condition (R) in Section~\ref{sec:assum} is satisfied by the map $\mathcal{R}$.

\begin{remark}
In certain physical applications, it is argued that Navier-type (Hodge or perfect slip) boundary conditions specified by
\begin{align}\label{equ:navier bc}
    u\cdot n=0 \quad \text{and} \quad \mathrm{curl}\, u\times n=0 \quad \text{on $\partial \mathscr{O}$}
\end{align}
are more appropriate~\cite{AceAmrCon19}. In this case, the operator $\mathcal{A}=-\nu \bb{P}\Delta$ is the \emph{Hodge--Stokes operator}, and the corresponding function spaces are
\begin{align*}
    \mathcal{V} &:= \{u\in H^1(\mathscr{O}): \divg u=0 \text{ in $\mathscr{O}$, } \left. u\cdot n \right|_{\partial \mathscr{O}}= 0 \,\text{ and } \left.\mathrm{curl}\, u\times n\right|_{\partial\mathscr{O}}=0 \},
    \\
    \mathcal{H} &:= \{u\in L^2(\mathscr{O}): \divg u=0 \text{ in $\mathscr{O}$ and } \left. u\cdot n \right|_{\partial \mathscr{O}}= 0\},
    \\
    \mathrm{D}(\mathcal{A}) &:= H^2(\mathscr{O}) \cap \mathcal{V}.
\end{align*}
The verification of condition (B) and condition (R) proceeds in the same manner. To verify \eqref{equ:Bvv A18} and \eqref{equ:Rv A18}, the corresponding results in~\cite{Kun25} for the Hodge--Stokes operator are used.
\end{remark}

\subsection{Stochastic magnetohydrodynamic equations}

The dynamics of turbulent motion of plasmas and liquid metals~\cite{Bis03} can be described by the stochastic magnetohydrodynamic (MHD) equations~\cite{Mot22, SerTem83}, which is a coupled system of stochastic Navier--Stokes and stochastic Maxwell-type equations. Such model is important in the study of the magnetic reconnection problem in astrophysics~\cite{PriFor00}. The stochastic MHD system read:
\begin{subequations}\label{equ:sme}
	\begin{alignat}{2}
		&\partial_t u - \nu \Delta u + (u\cdot\nabla)u - (B\cdot\nabla) B + \nabla \left(p+\frac12 \abs{B}^2\right) = g_1(u,B)\dot{W}_1
		\; && \quad\text{in $\mathscr{O}$,}
		\label{equ:sme a1}
		\\[1ex]
        \label{equ:sme a2}
        &\partial_t B - \kappa \Delta B + (u\cdot\nabla)B - (B\cdot\nabla)u = g_2(u,B) \dot{W}_2
        \; && \quad\text{in $\mathscr{O}$,}
        \\[1ex]
		&\divg u = \divg B= 0
		\; && \quad\text{in $\mathscr{O}$,}
		\label{equ:sme b}
		\\[1ex]
		&u(0,x)= u_0(x), \; B(0,x)= B_0(x)
		\; && \quad\text{in $\mathscr{O}$,}
		\label{equ:sme c}
		\\[1ex]
		&u = 0,
		\; && \quad\text{on $\partial\mathscr{O}$,}
		\label{equ:sme d}
        \\[1ex]
        \label{equ:sme e}
        &B\cdot n=0, \; \mathrm{curl}\; B\times n=0
        \; && \quad \text{on $\partial\mathscr{O}$.}
	\end{alignat}
\end{subequations}
The fields $u:\mathscr{O}\to \bb{R}^d$ and $B:\mathscr{O}\to \bb{R}^d$ denote the fluid velocity and the magnetic flux density, respectively. Here, $g_1\dot{W}_1$ and $g_2\dot{W}_2$ are the stochastic forcing terms, where $W_1$ and $W_2$ are independent real-valued Brownian motions. One can also add the term $\alpha u+ \beta \abs{u}^{r-1} u$ to the left-hand side of \eqref{equ:sme a1} to model a damped flow or a flow in porous media as in~\cite{AngCamCau23, TitTra19}.

The existence of martingale weak (in the sense of PDEs) solutions to the stochastic MHD systems with multiplicative noise for $d\leq 3$ is shown in~\cite{SriSun99, Yam16} for a bounded domain, and in~\cite{Mot22} for a Poincar\'e domain. Pathwise uniqueness of the solution is also derived for $d\leq 2$, which implies the existence of pathwise solutions (weak in the sense of PDEs). The argument in this paper establishes the existence and uniqueness of probabilistically and analytically strong solutions to the stochastic MHD system (including its Brinkman--Forchheimer modification). Our results also extend to the case of perfect slip boundary conditions for the fluid~\eqref{equ:navier bc} or periodic boundary conditions.

The appropriate functional setting for this problem will now be described. Define the spaces:
\begin{align*}
    V_1 &:= \{u\in H^1_0(\mathscr{O}): \divg u=0\},
    \\
    H_1 &:= \{u\in L^2(\mathscr{O}): \divg u=0 \text{ in $\mathscr{O}$ and } \left. u\cdot n \right|_{\partial \mathscr{O}}= 0\},
    \\
    V_2 &:= \{B\in H^1(\mathscr{O}): \divg B=0 \text{ in $\mathscr{O}$, } \left. B\cdot n \right|_{\partial \mathscr{O}}= 0 \,\text{ and } \left.\mathrm{curl}\, B\times n\right|_{\partial\mathscr{O}}=0 \},
    \\
    H_2 &:= H_1= \{B\in L^2(\mathscr{O}): \divg u=0 \text{ in $\mathscr{O}$ and } \left. u\cdot n \right|_{\partial \mathscr{O}}= 0\},
    \\
    \mathcal{V} &:= V_1 \times V_2,
    \\
    \mathcal{H} &:= H_1 \times H_2.
\end{align*}
As before, $\bb{P}:L^2(\mathscr{O})\to H_1$ is the Helmholtz--Leray projector. Let $A_1= -\nu \bb{P}\Delta$ be the Dirichlet--Stokes operator and $A_2= -\kappa \Delta$ be the Hodge Laplacian operator. Let $\mathcal{A}=A_1\oplus A_2$ with domain $\mathrm{D}(\mathcal{A})= \mathcal{V} \cap \left[H^2(\mathscr{O})\right]^2$ acting on $\Phi=(u,B)$. The map $\mathcal{B}$ is defined by
\begin{align*}
    \mathcal{B}(\Phi_1,\Phi_2)= 
    \begin{pmatrix} \bb{P}\big[(u_1\cdot\nabla)u_2- (B_1\cdot\nabla)B_2\big] \\[0.5ex] (u_1\cdot\nabla)B_2-(B_1\cdot\nabla)u_2 \end{pmatrix}
\end{align*}
for $\Phi_i=(u_i,B_i)\in \mathcal{V}$, where $i=1,2$. The stochastic term is $g(\Phi)\dot{W}= (g_1\dot{W}_1,\, g_2\dot{W}_2)^\top$. In this case, $\mathcal{R}(\Phi)=0$.

The map $\mathcal{B}$ satisfies \eqref{equ:B antisym}, as well as inequalities \eqref{equ:est B 3d alt} and \eqref{equ:est B with A}; see~\cite{Mot22}. The verification of \eqref{equ:H norm B} and \eqref{equ:V norm B} proceeds in the same manner as in \eqref{equ:Bv L2} and \eqref{equ:Bvv A18}.

\subsection{Stochastic Boussinesq model for the B\'enard convection}\label{sec:sbc}

A model for convection in a fluid due to B\'enard consists of a coupled system of Navier--Stokes and heat equations, employing Boussinesq approximation (see~\cite{AgrBerSat15, FoiManTem87, GalPad90} and references therein). One can also use the Brinkman--Forchheimer equation instead for the fluid flow, as in~\cite{AmiLepOta25, SaySemTri21, TitTra24}. Let $e$ be a given vector in $\bb{R}^d$ and suppose that the non-homogeneous boundary temperature data is fixed. By employing a transformation procedure~\cite{His91}, one can convert the problem into a system with homogeneous Dirichlet data, which reads:
\begin{subequations}\label{equ:sbc}
	\begin{alignat}{2}
		&\partial_t u - \nu \Delta u + (u\cdot\nabla)u + \alpha u +\beta |u|^{r-1} u + \nabla p + \theta e = g_1(u,\theta)\dot{W}_1
		\; && \quad\text{in $\mathscr{O}$,}
		\label{equ:sbc a1}
		\\[1ex]
        \label{equ:sbc a2}
        &\partial_t \theta - \kappa \Delta \theta + (u\cdot\nabla)\theta + (u\cdot\nabla)\phi = g_2(u,\theta) \dot{W}_2
        \; && \quad\text{in $\mathscr{O}$,}
        \\[1ex]
		&\divg u = 0
		\; && \quad\text{in $\mathscr{O}$,}
		\label{equ:sbc b}
		\\[1ex]
		&u(0,x)= u_0(x) 
		\; && \quad\text{in $\mathscr{O}$,}
		\label{equ:sbc c}
		\\[1ex]
		&u = 0, \; \theta=0
		\; && \quad\text{on $\partial\mathscr{O}$.}
		\label{equ:sbc d}
	\end{alignat}
\end{subequations}
In the above system, $u:\mathscr{O}\to \bb{R}^d$ and $\theta:\mathscr{O}\to \bb{R}$ denote the fluid velocity and the temperature, respectively. The function $\phi\in H^2(\mathscr{O})$ is assumed to be given. The non-negative coefficients $\alpha$ and $\beta$ are, respectively, the Darcy and the Forchheimer coefficients. Perfect slip boundary conditions for $u$ as defined by \eqref{equ:navier bc} can also be used.

The existence of probabilistically strong, analytically weak solutions to the problem~\eqref{equ:sbc} is established in~\cite{ChuMil10}. The argument in this paper proves the existence and uniqueness of probabilistically and analytically strong solutions to this problem for $d\leq 3$ (which is global when $d\leq 2$).

We now describe the functional setting for this system. Define the spaces:
\begin{align*}
    V_1 &:= \{v\in H^1_0(\mathscr{O}): \divg v=0\},
    \\
    H_1 &:= \{v\in L^2(\mathscr{O}): \divg v=0 \;\text{ and } \left. v\cdot n \right|_{\partial \mathscr{O}}= 0\},
    \\
    V_2 &:= H^1_0(\mathscr{O}),
    \\
    H_2 &:= L^2(\mathscr{O}),
    \\
    \mathcal{V} &:= V_1 \times V_2,
    \\
    \mathcal{H} &:= H_1 \times H_2.
\end{align*}
Let $A_1= -\nu \bb{P}\Delta$ be the Dirichlet--Stokes operator and $A_2=-\kappa\Delta$ be the Dirichlet Laplacian operator. Let $\mathcal{A}= A_1 \oplus A_2$ with domain $\mathrm{D}(\mathcal{A})= \mathcal{V} \cap \left[H^2(\mathscr{O})\right]^2$ acting on $\Phi=(u,\theta)$. The map $\mathcal{B}$ is given by
\begin{align*}
    \mathcal{B}(\Phi_1,\Phi_2)= 
    \begin{pmatrix} \bb{P}\big[(u_1\cdot\nabla)u_2\big] \\[0.5ex] (u_1\cdot\nabla)\theta_2 \end{pmatrix}
\end{align*}
for $\Phi_i=(u_i,\theta_i)\in \mathcal{V}$, where $i=1,2$. The map $\mathcal{R}$ is defined by
\begin{align*}
    \mathcal{R}(\Phi)= 
    \begin{pmatrix} \theta e+ \alpha u + \bb{P}\big[|u|^{r-1} u\big] \\[0.5ex] (u\cdot\nabla) \phi \end{pmatrix},
\end{align*}
where $\Phi=(u,\theta)$. The stochastic term is $g(\Phi)\dot{W}= (g_1\dot{W}_1,\, g_2\dot{W}_2)^\top$. 

By similar argument as in Section~\ref{sec:scbf}, we infer that $\mathcal{B}$ fulfils condition (B). Next, we verify condition (R) for the map $\mathcal{R}$. Firstly, \eqref{equ:Rv v pos} is satisfied with $\mathcal{R}(\Phi)=\mathcal{S}(\Phi)+ \mathcal{F}(\Phi)$, where
\begin{align*}
    \mathcal{S}(\Phi)= 
    \begin{pmatrix} \alpha u + \bb{P}\big[|u|^{r-1} u\big] \\[0.5ex] 0 \end{pmatrix},
    \quad 
    \mathcal{F}(\Phi)= 
    \begin{pmatrix} \theta e \\[0.5ex] (u\cdot\nabla) \phi \end{pmatrix}.
\end{align*}
Since $\mathcal{S}$ has the same form as \eqref{equ:Rv scbf}, it remains to verify~\eqref{equ:est R v in v}, \eqref{equ:est Rv1 v2}, \eqref{equ:est Rv1 minus v2}, and \eqref{equ:est R vv} for the map $\mathcal{F}$. Firstly, by H\"older's inequality and Sobolev embedding,
\begin{align*}
    \abs{\mathcal{F}(\Phi)}^2
    &\leq 
    C \abs{\theta}^2 + C \abs{u}_{L^4}^2 \abs{\nabla \phi}_{L^4}^2
    \leq
    C\norm{\Phi}^2,
\end{align*}
which implies \eqref{equ:est R v in v}.
Next, by H\"older's inequality and the Sobolev embedding,
\begin{align*}
    \abs{\inpro{\mathcal{F}(\Phi_1)}{\Phi_2}}
    &\leq
    C \abs{\theta_1} \abs{u_2} + C \abs{u_1}_{L^4} \abs{\nabla \phi}_{L^4} \abs{\theta_2}
    \\
    &\leq
    C \left(\norm{u_1}+ \norm{\theta_1} \right) \left(\abs{u_2}+ \abs{\theta_2} \right)
    \leq
    C \norm{\Phi_1} \abs{\Phi_2},
\end{align*}
which yields \eqref{equ:est Rv1 v2}. Inequality \eqref{equ:est Rv1 minus v2} can be shown similarly. Finally, by interpolation and Gagliardo--Nirenberg inequalities,
\begin{align*}
    \abs{\mathcal{F}(\Phi)}_{\mathrm{D}(\mathcal{A}^{\frac18})}^2
    \leq
    \abs{\mathcal{F}(\Phi)}_{\mathrm{D}(\mathcal{A}^{\frac14})}^2
    &\leq
    C \abs{\mathcal{F}(\Phi)} \norm{\mathcal{F}(\Phi)}
    \\
    &\leq
    C\norm{\Phi} \big(\norm{\theta}+ \abs{\nabla u}_{L^4} \abs{\nabla \phi}_{L^4} + \abs{u}_{L^\infty} \abs{\phi}_{H^2} \big)
    \\
    &\leq
    C \norm{\Phi} \abs{\mathcal{A}\Phi},
\end{align*}
which implies \eqref{equ:est R vv}, thus completing the verification of condition (R).

\subsection{Stochastic convective dynamo system}

The dynamo theory~\cite{Els56} is a widely accepted mechanism for the creation and the maintenance of planetary magnetic field. It asserts that the cosmic magnetic field is generated and maintained by the turbulent motion of conducting fluid. 

Let $\mathscr{O}= \bb{T}^{d-1}\times [0,h]$ be a cylindrical domain, where $h>0$ and $d=2,3$. Let $\{e_1,\ldots,e_d\}$ be the standard basis of $\bb{R}^d$. We define the following fields:
\begin{enumerate}[(i)]
    \item $u:[0,T]\times \mathscr{O}\to \bb{R}^d$ is the fluid velocity, i.e. $u=(u_1,\ldots,u_d)$,
    \item $B: [0,T]\times \mathscr{O}\to \bb{R}^d$ is the magnetic flux density, i.e. $B=(B_1,\ldots,B_d)$,
    \item $\theta :[0,T] \times \mathscr{O} \to \bb{R}$ is the temperature profile.
\end{enumerate}
The dynamics of the fluid motion, the magnetic field, and the temperature profile are described by the stochastic convective dynamo system~\cite{CatEmoWei03, CatHug06, Tob21}:
\begin{subequations}\label{equ:scds}
	\begin{alignat}{2}
		&\partial_t u - \nu_1 \Delta u + \sigma e_d \times u + (u\cdot\nabla)u - (B\cdot\nabla) B + \nabla \left(p+\frac12 \abs{B}^2\right) + \theta e_d = g_1 \dot{W}_1
		\; && \quad\text{in $\mathscr{O}$,}
		\label{equ:scds a1}
		\\[1ex]
        &\partial_t B - \nu_2 \Delta B + (u\cdot\nabla)B - (B\cdot\nabla)u = g_2 \dot{W}_2
		\; && \quad\text{in $\mathscr{O}$,}
		\label{equ:scds a2}
		\\[1ex]
        \label{equ:scds a3}
        &\partial_t \theta - \nu_3 \Delta \theta + (u\cdot\nabla)\theta + e_d\cdot u = g_3 \dot{W}_3
        \; && \quad\text{in $\mathscr{O}$,}
        \\[1ex]
		&\divg u = 0,\; \divg B=0
		\; && \quad\text{in $\mathscr{O}$,}
		\label{equ:scds b}
		\\[1ex]
		&u(0,x)= u_0(x) ,\; B(0,x)=B_0(x), \; \theta(0,x)=\theta_0(x)
		\; && \quad\text{in $\mathscr{O}$,}
		\label{equ:scds c}
		\\[1ex]
		&u_d=B_d=0,\; \theta=0
		\; && \quad\text{on $\Gamma_d$.}
		\label{equ:scds d}
	\end{alignat}
\end{subequations}
The system \eqref{equ:scds} is equipped with a periodic boundary condition in the lateral parts. Here, $\Gamma_d:= \{x: x_d=0 \text{ or } h\}$ is the top and the bottom boundaries. The constant $\sigma$ is the Coriolis force parameter, which is zero when $d=2$.

The functional setting for this system will be described next. Let $H^1_{\mathrm{d}}(\mathscr{O})$ denote the closure in $H^1(\mathscr{O})$ of the space of compactly supported smooth functions that are periodic in the lateral direction. Define the spaces:
\begin{align*}
    V_1 &= V_2 := \{v\in H^1_\mathrm{d}(\mathscr{O}): \divg v=0\},
    \\
    V_3 &:= H^1_\mathrm{d}(\mathscr{O})
    \\
    H_1 &= H_2 := \{v\in L^2(\mathscr{O}): \divg v=0 \;\text{ in $\mathscr{O}$ and } \left. v\cdot n \right|_{\Gamma_d}= 0\},
    \\
    H_3 &:= L^2(\mathscr{O}),
    \\
    \mathcal{V}&:= V_1\times V_2 \times V_3,
    \\
    \mathcal{H}&:= H_1\times H_2 \times H_3.
\end{align*}
For $i=1,2$, let $A_i= -\nu_i \bb{P}\Delta$ be the (periodic-Dirichlet) Stokes operator, and let $A_3= -\nu_3 \Delta$ be the Laplacian operator with the same boundary conditions. Let $\mathcal{A}= A_1 \oplus A_2 \oplus A_3$ with domain $\mathrm{D}(\mathcal{A})= \mathcal{V} \cap \left[H^2(\mathscr{O})\right]^3$ acting on $\Phi=(u,B,\theta)$. More generally, it is known that $\mathrm{D}(\mathcal{A}^{\frac{s}{2}})= \big[ H^s(\mathscr{O})\big]^3$ for $s\in [0,\frac12)$ and $\mathrm{D}(\mathcal{A}^{\frac12})= \mathcal{V}$; see~\cite{IftRau01} for further properties of these function spaces and operators. The map $\mathcal{B}$ is given by
\begin{align*}
    \mathcal{B}(\Phi_1,\Phi_2)= 
    \begin{pmatrix} \bb{P}\big[(u_1\cdot\nabla)u_2- (B_1\cdot\nabla)B_2 \big] \\[0.5ex] (u_1\cdot\nabla) B_2- (B_1\cdot\nabla)u_2 \\[0.5ex]
    (u_1\cdot\nabla)\theta_2\end{pmatrix}
\end{align*}
for $\Phi_i=(u_i,B_i,\theta_i)\in \mathcal{V}$, where $i=1,2$. The map $\mathcal{R}$ is defined by
\begin{align*}
    \mathcal{R}(\Phi)= 
    \begin{pmatrix} \sigma \bb{P}\big[e_d\times u\big] + \theta e_d\\[0.5ex] 0 \\[0.5ex] e_d\cdot u \end{pmatrix},
\end{align*}
where $\Phi=(u,B,\theta)$. The stochastic term is $g(\Phi)\dot{W}= (g_1\dot{W}_1,\, g_2\dot{W}_2, g_3\dot{W}_3)^\top$. To show that condition (B) and condition (R) in the assumptions are satisfied, we can proceed in a similar manner as in Section~\ref{sec:scbf} and Section~\ref{sec:sbc}.

\subsection{Stochastic micropolar and magneto-micropolar fluid systems}

The theory of micropolar fluids~\cite{Eri66} describes the behaviour of fluids that exhibit rotational effects and micro-rotational inertia, which include blood and liquid crystal. Taking the electromagnetic effects into consideration, we obtain the stochastic magneto-micropolar fluid system~\cite{InoMatOta03, Yam14, Yam16m}, which reads:
\begin{subequations}\label{equ:smmf}
	\begin{alignat}{2}
		&\partial_t u - (\mu+\chi) \Delta u + (u\cdot\nabla)u - (B\cdot\nabla) B + \nabla \left(p+\frac12 \abs{B}^2\right) - \chi \,\curl w= g_1 \dot{W}_1
		\; && \quad\text{in $\mathscr{O}$,}
		\label{equ:smmf a1}
        \\[1ex]
        \label{equ:smmf a2}
        &\partial_t w - \gamma \Delta w - (\alpha+\beta)\nabla(\divg w) + 2\chi w + (u\cdot\nabla)w - \chi \,\curl u = g_2 \dot{W}_2
        \; && \quad\text{in $\mathscr{O}$,}
		\\[1ex]
        \label{equ:smmf a3}
        &\partial_t B - \nu \Delta B + (u\cdot\nabla)B - (B\cdot\nabla)u = g_3\dot{W}_3
        \; && \quad\text{in $\mathscr{O}$,}
        \\[1ex]
		&\divg u = \divg B= 0
		\; && \quad\text{in $\mathscr{O}$,}
		\label{equ:smmf b}
		\\[1ex]
		&u(0,x)= u_0(x),\; w(0,x)=w_0(x), \; B(0,x)= B_0(x)
		\; && \quad\text{in $\mathscr{O}$,}
		\label{equ:smmf c}
		\\[1ex]
		&u = w = 0,
		\; && \quad\text{on $\partial\mathscr{O}$,}
		\label{equ:smmf d}
        \\[1ex]
        \label{equ:smmf e}
        &B\cdot n=0, \; \mathrm{curl}\; B\times n=0
        \; && \quad \text{on $\partial\mathscr{O}$.}
	\end{alignat}
\end{subequations}
Here, $u,w,B$ are vector fields representing the fluid velocity, the micro-rotational vector of the fluid, and the magnetic flux density, respectively. For physical reasons, the coefficients $\mu, \chi,\gamma,\alpha,\beta,\nu$ are assumed to satisfy $\min\{\mu,\chi,\gamma,\nu,\alpha+\beta+\gamma\} >0$. This system can also be further coupled with an equation of the form \eqref{equ:scds a3} describing the temperature dynamics~\cite{KalLanLuk19, XuQiaZha21}.

The existence of strong solutions to the deterministic magneto-micropolar fluid system is shown in~\cite{Roj97, Yam05}. In the stochastic case, the existence of martingale weak solutions to the system is proven in~\cite{Yam14, Yam16m}. This paper establishes the existence and uniqueness of local pathwise strong solutions to the stochastic magneto-micropolar fluid system for $d\leq 3$. This pathwise strong solution is shown to be global when $d\leq 2$.

The functional setting for this problem will be described next. Define:
\begin{align*}
    V_1 &:= \{v\in H^1_0(\mathscr{O}): \divg v=0\},
    \\
    V_2 &:= H^1_0(\mathscr{O}),
    \\
    V_3 &:= \{v\in H^1(\mathscr{O}): \divg u=0 \text{ in $\mathscr{O}$, } \left. u\cdot n \right|_{\partial \mathscr{O}}= 0 \,\text{ and } \left.\mathrm{curl}\, u\times n\right|_{\partial\mathscr{O}}=0\},
    \\
    H_1 &:= \{v\in L^2(\mathscr{O}): \divg v=0 \;\text{ and } \left. v\cdot n \right|_{\partial \mathscr{O}}= 0\},
    \\
    H_2 &:= L^2(\mathscr{O})
    \\
    H_3 &:= H_1
    \\
    \mathcal{V} &:= V_1 \times V_2 \times V_3,
    \\
    \mathcal{H} &:= H_1 \times H_2 \times H_3.
\end{align*}
Let $\bb{P}:L^2(\mathscr{O})\to H_1$ be the Leray--Helmholtz projection, $\Delta_D$ be the Dirichlet Laplacian, and $\Delta_H$ be the Hodge Laplacian. Define the operators:
\begin{align*}
    A_1 &:= -(\mu+\chi)\bb{P}\Delta_D, \text{ where } 
    \mathrm{D}(A_1)= V_1 \cap H^2(\mathscr{O}),
    \\
    A_2 &:= -\gamma\Delta_D  -(\alpha+\beta)\nabla \divg,\text{ where } \mathrm{D}(A_2)= V_2 \cap H^2(\mathscr{O}),
    \\
    A_3 &:= -\nu \Delta_H, \text{ where } \mathrm{D}(A_3)= V_3 \cap H^2(\mathscr{O}),
    \\
    \mathcal{A} &:= A_1\oplus A_2 \oplus A_3, \text{ where } \mathrm{D}(\mathcal{A})= A_1\times A_2\times A_3.
\end{align*}
The map $\mathcal{B}$ is given by
\begin{align*}
    \mathcal{B}(\Phi_1,\Phi_2)= 
    \begin{pmatrix} \bb{P}\big[(u_1\cdot\nabla)u_2- (B_1\cdot\nabla)B_2 \big] \\[0.5ex] (u_1\cdot\nabla) w_2
    \\[0.5ex]
    (u_1\cdot\nabla)B_2- (B_1\cdot\nabla) u_2 \end{pmatrix}
\end{align*}
for $\Phi_i=(u_i,w_i,B_i)\in \mathcal{V}$, where $i=1,2$. The map $\mathcal{R}$ is defined by
\begin{align*}
    \mathcal{R}(\Phi)= 
    \begin{pmatrix} -\chi \,\curl w \\[0.5ex] 2\chi w-\chi \,\curl u \\[0.5ex] 0 \end{pmatrix},
\end{align*}
where $\Phi=(u,w,B)$. The stochastic term is $g(\Phi)\dot{W}= (g_1\dot{W}_1,\, g_2\dot{W}_2, g_3\dot{W}_3)^\top$. As before, we can verify that our standing assumptions are valid for the operators defined above.

\subsection{Stochastic diffusive tropical climate model}

A tropical climate model of the equatorial atmosphere is proposed by Frierson--Majda--Pauluis in~\cite{FriMajPau04} by performing a Galerkin truncation to the hydrostatic Boussinesq equation. Since then, several physically reasonable modifications of the model have been proposed, including adding diffusion or damping terms to some or all of the equations~\cite{LiTi16, Ye17}. Here, we consider the stochastic diffusive tropical climate model:
\begin{subequations}\label{equ:stc}
	\begin{alignat}{2}
		&\partial_t u - \nu_1 \Delta u + (u\cdot\nabla)u + \nabla p + \divg(v\otimes v) = g_1 \dot{W}_1
		\; && \quad\text{in $\mathscr{O}$,}
		\label{equ:stc a1}
        \\[1ex]
        \label{equ:stc a2}
        &\partial_t v - \nu_2 \Delta v + (u\cdot\nabla) v + \nabla\theta + (v\cdot\nabla)u = g_2 \dot{W}_2
        \; && \quad\text{in $\mathscr{O}$,}
		\\[1ex]
        \label{equ:stc a3}
        &\partial_t \theta - \nu_3 \Delta \theta + (u\cdot\nabla)\theta + \divg v = g_3\dot{W}_3
        \; && \quad\text{in $\mathscr{O}$,}
        \\[1ex]
		&\divg u = 0
		\; && \quad\text{in $\mathscr{O}$,}
		\label{equ:stc b}
		\\[1ex]
		&u(0,x)= u_0(x), \; v(0,x)= v_0(x), \; \theta(0,x)=\theta_0(x)
		\; && \quad\text{in $\mathscr{O}$,}
		\label{equ:stc c}
		\\[1ex]
		&u = v = 0,\; \theta=0,
		\; && \quad\text{on $\partial\mathscr{O}$,}
		\label{equ:stc d}
	\end{alignat}
\end{subequations}
where $\nu_1,\nu_2,\nu_3>0$ are diffusivity constants, and
\begin{enumerate}[(i)]
    \item $u:[0,T]\times \mathscr{O}\to \bb{R}^d$ is the barotropic mode,
    \item $v: [0,T]\times \mathscr{O}\to \bb{R}^d$ is the first baroclinic mode of the velocity vector,
    \item $\theta :[0,T] \times \mathscr{O} \to \bb{R}$ is the temperature profile.
\end{enumerate}

In the deterministic case, the system~\eqref{equ:stc} is widely studied. Global existence of strong solutions for $d=2$ is established in~\cite{LiTi16}, even assuming $\nu_3=0$. Global existence of solutions for $d=2$ with $-\nu_1\Delta u$ replaced by the fractional Laplacian is shown in~\cite{Ye17}; see also~\cite{DonWanWu19, DonWuYe19}. For $d=3$, the system with additional damping terms is studied in~\cite{YuaChe21}. In the stochastic case, local existence and uniqueness of strong solutions for the problem when $d=2$ is shown in~\cite{YinYan23}. In this paper, we establish the local existence and uniqueness of pathwise strong solutions for $d\leq 3$, which is global when $d=2$.

We describe the functional setting for this problem as follows. Define:
\begin{align*}
    V_1 &:= \{v\in H^1_0(\mathscr{O}): \divg v=0\},
    \\
    V_2 &= V_3 := H^1_0(\mathscr{O}),
    \\
    H_1 &:= \{v\in L^2(\mathscr{O}): \divg v=0 \;\text{ and } \left. v\cdot n \right|_{\partial \mathscr{O}}= 0\},
    \\
    H_2 &= H_3 := L^2(\mathscr{O})
    \\
    \mathcal{V} &:= V_1 \times V_2 \times V_3,
    \\
    \mathcal{H} &:= H_1 \times H_2 \times H_3.
\end{align*}
Let $A_1= -\nu_1 \bb{P}\Delta$ be the Dirichlet--Stokes operator, $A_2=-\nu_2\Delta$, and $A_3=-\nu_3 \Delta$, where $\Delta$ is Dirichlet Laplacian operator. Let $\mathcal{A}= A_1 \oplus A_2 \oplus A_3$ with domain $\mathrm{D}(\mathcal{A})= \mathcal{V} \cap \left[H^2(\mathscr{O})\right]^3$ acting on $\Phi=(u,v,\theta)$. The map $\mathcal{B}$ is given by
\begin{align*}
    \mathcal{B}(\Phi_1,\Phi_2)= 
    \begin{pmatrix} \bb{P}\big[(u_1\cdot\nabla)u_2+ \divg(v_1\otimes v_2) \big] \\[0.5ex] (u_1\cdot\nabla) v_2 + (v_1\cdot\nabla) u_2
    \\[0.5ex]
    (u_1\cdot\nabla)\theta_2 \end{pmatrix}
\end{align*}
for $\Phi_i=(u_i,v_i,\theta_i)\in \mathcal{V}$, where $i=1,2$. The map $\mathcal{R}$ is defined by
\begin{align*}
    \mathcal{R}(\Phi)= 
    \begin{pmatrix} 0 \\[0.5ex] \nabla\theta \\[0.5ex] \divg v \end{pmatrix},
\end{align*}
where $\Phi=(u,w,B)$. The stochastic term is $g(\Phi)\dot{W}= (g_1\dot{W}_1,\, g_2\dot{W}_2, g_3\dot{W}_3)^\top$. We can verify that our standing assumptions are valid for the operators defined above.

%\subsection{Stochastic Leray $\alpha$-model}
%
%Details to be added: \cite{CheHolOlTit05}. MHD-$\alpha$ model: \cite{LinTit07}. Micropolar-$\alpha$ model: \cite{YuaQia19}. Stochastic versions: \cite{DeuRazSan12, DogGou25}.

\section{Existence and uniqueness of a local strong solution}\label{sec:loc sol}

Suppose that we fix a stochastic basis $\mathcal{S}:=(\Omega, \mathcal{F}, \mathbb{F}, \mathbb{P}, \{\beta_k\}_{k\in \bb{N}})$, where $\bb{F}= \{\mathcal{F}_t\}_{t\geq 0}$ is a filtration satisfying the usual conditions and $\{\beta_k\}_{k\in \bb{N}}$ is a sequence of independent Brownian motions adapted to this filtration. We have the following definitions of weak and strong pathwise solutions of~\eqref{equ:equation}.

\begin{definition}[local weak pathwise solution]\label{def:weak sol}
The pair $(\Phi,\tau)$ is a \emph{local weak pathwise solution} of \eqref{equ:equation} with initial data $\Phi_0$ if $\tau$ is a strictly positive stopping time and $\Phi(\,\cdot\wedge \tau)$ is a predictable process in $\mathcal{V}'$ such that
\begin{align}\label{equ:reg local weak}
	\Phi(\,\cdot\wedge \tau)\in L^2\big(\Omega; C_\mathrm{w}([0,\infty); \mathcal{H})\big), 
	\quad
	\Phi \one_{t\leq \tau} \in L^2\big(\Omega; L^2_{\mathrm{loc}}([0,\infty); \mathcal{V})\big),
\end{align}
and that for any $t>0$,
\begin{align}\label{equ:int equal}
	\Phi(t\wedge\tau) + \int_0^{t\wedge\tau} \big(\mathcal{A}\Phi+ \mathcal{B}(\Phi) + \mathcal{R}(\Phi)\big)\, \ds
	=
	\Phi_0 + \int_0^{t\wedge\tau} g(\Phi)\, \dW \quad \text{in $\mathcal{V}'$}.
\end{align}
\end{definition}

\begin{definition}[local strong pathwise solution]\label{def:strong sol}
A local weak pathwise solution $(\Phi,\tau)$ is said to be a \emph{local strong pathwise solution} of \eqref{equ:equation} with initial data $\Phi_0$ if $\Phi(\,\cdot\wedge \tau)$ is a predictable process in $\mathcal{H}$ such that
\begin{align}\label{equ:reg local strong}
	\Phi(\,\cdot\wedge \tau)\in L^2\big(\Omega; C([0,\infty); \mathcal{V})\big), 
	\quad
	\Phi \one_{t\leq \tau} \in L^2\big(\Omega; L^2_{\mathrm{loc}}([0,\infty); \mathrm{D}(\mathcal{A}))\big),
\end{align}
and that for any $t>0$, the equality~\eqref{equ:int equal} {holds} in $\mathcal{H}$.
\end{definition}

\begin{definition}[maximal strong pathwise solution]\label{def:global strong sol}
The pair $(\Phi,\xi)$ is said to be a \emph{maximal strong pathwise solution} if there exists a strictly increasing sequence of stopping times $\{\tau_n\}_{n\in \bb{N}}$ with $\tau_n \uparrow \xi$ a.s., such that each pair $(\Phi,\tau_n)$ is a local pathwise strong solution and
\begin{align*}
	\sup_{t<\xi} \norm{\Phi(t)}^2 + \int_0^\xi \abs{\mathcal{A}\Phi(s)}^2 \ds = \infty \;\text{ a.s. on the set } \{\xi<\infty\}.
\end{align*}
The sequence $\{\tau_n\}_{n\in \bb{N}}$ is said to \emph{announce} (finite time blow up at) $\xi$. 

Such a maximal strong pathwise solution $(\Phi,\xi)$ is \emph{global} if $\bb{P}\left[\xi=\infty \right]=1$.
\end{definition}

\begin{remark}\label{rem1}
Clearly, the following holds:
\begin{enumerate}
\item $\xi$ is a stopping time;
\item we can choose the announcing sequence $\{\tau_n\}$ such that
\end{enumerate}
\begin{align*}
	\sup_{t\leq \tau_n} \norm{\Phi(t)}^2 + \int_0^{\tau_n} \abs{\mathcal{A}\Phi(s)}^2 \ds = n \;\text{ a.s. on the set } \{\xi<\infty\}.
\end{align*}
\end{remark}
To show the existence of a solution to \eqref{equ:equation}, we use the Galerkin method and a compactness argument utilising Theorem~\ref{the:compactness}. Let $\mathcal{H}_n:= \text{span}\{e_1,e_2,\ldots,e_n\}$, where $\{e_k\}_{k\in \bb{N}}$ is an orthonormal basis for $\mathcal{H}$ consisting of eigenfunctions of $\mathcal{A}$. Let $P_n:\mathcal{H}\to \mathcal{H}_n$ be the projection operator and let $Q_n=I-P_n$, where $I$ is the identity operator.

The Galerkin approximation of order $n$ for \eqref{equ:equation} can be written as a problem in $\mathcal{H}_n$: find an adapted process $\Phi^n \in C([0,T];\mathcal{H}_n)$ satisfying
\begin{equation}
\begin{aligned}\label{equ:galerkin}
	\mathrm{d}\Phi^n + \big(\mathcal{A} \Phi^n + P_n \mathcal{B}(\Phi^n) + P_n \mathcal{R}(\Phi^n) \big) \dt
	&=
	\sum_{k=1}^\infty P_n g_k(\Phi^n)\, \mathrm{d}\beta_k,
	\\
	\Phi^n(0) &= \Phi_0^n:= P_n \Phi_0.
\end{aligned}
\end{equation}
The existence of a unique solution to \eqref{equ:galerkin} follows by a standard argument.
An important ingredient needed to apply Theorem~\ref{the:compactness} is conferred by the following proposition.

\begin{proposition}\label{pro:hypotheses}
Let $\{\Phi^n\}_{n\in \bb{N}}$ be a sequence of solutions to \eqref{equ:galerkin}. Suppose that $\Phi_0\in \mathcal{V}$. For any $M>1$ and $T>0$, consider a collection of stopping times
\begin{align}\label{equ:Tn MT}
	\mathcal{T}_n^{M,T}= \left\{ \tau\leq T: \left( \sup_{t\in [0,\tau]} \norm{\Phi^n(t)}^2 + \int_0^\tau \abs{\mathcal{A}\Phi^n(s)}^2 \ds \right)^{\frac12} \leq M+ \norm{\Phi_0^n} \right\},
\end{align}
and let $\mathcal{T}_{n,m}^{M,T}:= \mathcal{T}_n^{M,T} \cap \mathcal{T}_m^{M,T}$.
Furthermore, for any $n\in\bb{N}, S>0$, and stopping time $\tau$, let
\begin{align}\label{equ:event An tau}
	A_n(\tau,S):= \left\{ \sup_{t\in [0,\tau\wedge S]} \norm{\Phi^n(t)}^2 + \int_0^{\tau\wedge S} \abs{\mathcal{A}\Phi^n(s)}^2 \ds > \norm{\Phi^n_0}^2 + (M-1)^2 \right\}.
\end{align}
Then the following statements hold true:
\begin{enumerate}[(i)]
	\item For any $T>0$ and $M>1$,
	\begin{align}\label{equ:lim sup cauchy}
		\lim_{n\to\infty} \sup_{m>n}\, \sup_{\tau\in \mathcal{T}_{m,n}^{M,T}} \bb{E} \left[ \sup_{t\in [0,\tau]} \norm{\Phi^m(t)-\Phi^n(t)}^2 + \int_0^\tau \abs{\mathcal{A}\big(\Phi^m(s)-\Phi^n (s)\big)}^2 \ds \right] = 0. 
	\end{align}
	\item For any $T>0$ and $M>1$,
	\begin{align}\label{equ:lim sup prob}
		\lim_{S\to 0} \sup_{n\in\bb{N}}\, \sup_{\tau \in\mathcal{T}_n^{M,T}} \bb{P}\left[ A_n(\tau,S) \right] =0.
	\end{align}
\end{enumerate}
\end{proposition}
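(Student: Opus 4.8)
The plan is to prove both assertions by Itô energy estimates at the level of the $\mathcal{V}$-norm: apply the Itô formula to $t\mapsto\norm{\Phi^n(t)}^2=\abs{\mathcal{A}^{1/2}\Phi^n(t)}^2$ (and to $\norm{\Phi^m(t)-\Phi^n(t)}^2$ for part~(i)), control the stochastic integral by the Burkholder--Davis--Gundy inequality, absorb the dissipation term $\abs{\mathcal{A}\cdot}^2$ on the left, and close via the stochastic Gronwall lemma of~\cite{GlaZia09}. The stopping-time constraints $\tau\in\mathcal{T}_n^{M,T}$, $\mathcal{T}_{m,n}^{M,T}$ are used precisely to make the nonlinear coefficients bounded and, crucially, to bound $\int_0^\tau\abs{\mathcal{A}\Phi^n(s)}^2\ds\le(M+\norm{\Phi_0})^2$, which is what compensates for the absence of any cancellation in $\inpro{\mathcal{B}(v)}{\mathcal{A}v}$.

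For part~(i), set $\Psi:=\Phi^m-\Phi^n$ with $m>n$. Subtracting the Galerkin equations and applying Itô to $\norm{\Psi}^2$ gives, up to $\tau\in\mathcal{T}_{m,n}^{M,T}$,
\[
\dif\norm{\Psi}^2+2\abs{\mathcal{A}\Psi}^2\dt=\big(\mathcal{I}_{\mathcal{B}}+\mathcal{I}_{\mathcal{R}}+\mathcal{I}_{g}\big)\dt+\dif N,
\]
with $N$ a martingale whose bracket is controlled by $\int\norm{\Psi}^2\norm{P_m g(\Phi^m)-P_n g(\Phi^n)}_{\ell^2(\mathcal{V})}^2\ds$. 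One writes $P_m\mathcal{B}(\Phi^m)-P_n\mathcal{B}(\Phi^n)=P_n\big[\mathcal{B}(\Psi,\Phi^m)+\mathcal{B}(\Phi^n,\Psi)\big]+(P_m-P_n)\mathcal{B}(\Phi^m)$, and similarly for $\mathcal{R}$ and $g$. The diagonal pieces carry no cancellation: by~\eqref{equ:est B with A},
\[
\abs{\inpro{\mathcal{A}\Psi}{\mathcal{B}(\Psi,\Phi^m)+\mathcal{B}(\Phi^n,\Psi)}}\le\tfrac14\abs{\mathcal{A}\Psi}^2+C\big(\norm{\Phi^m}\abs{\mathcal{A}\Phi^m}+\norm{\Phi^n}^4\big)\norm{\Psi}^2,
\]
and by~\eqref{equ:est Rv1 minus v2} the $\mathcal{R}$-difference contributes $C(1+\norm{\Phi^m}^2+\norm{\Phi^n}^2)\norm{\Psi}^2$; on $\mathcal{T}_{m,n}^{M,T}$ the time-integrals of all these coefficients of $\norm{\Psi}^2$ are deterministically bounded in terms of $M$, $T$ and $\norm{\Phi_0}$. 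The projection-defect pieces are genuinely small: using $\mathcal{B}(v),\mathcal{R}(v)\in\mathrm{D}(\mathcal{A}^{1/8})$ together with~\eqref{equ:V norm B},~\eqref{equ:est R vv} one obtains $\abs{\inpro{\mathcal{A}\Psi}{(P_m-P_n)\mathcal{B}(\Phi^m)}}\le\tfrac14\abs{\mathcal{A}\Psi}^2+C\lambda_{n+1}^{-1/4}\norm{\Phi^m}^2\abs{\mathcal{A}\Phi^m}^2$, whose time-integral is $O(\lambda_{n+1}^{-1/4})$ on $\mathcal{T}_{m,n}^{M,T}$, and from~\eqref{equ:cond G} one gets $\norm{(P_m-P_n)g(\Phi^m)}_{\ell^2(\mathcal{V})}^2\le\lambda_{n+1}^{-1}\norm{g(\Phi^m)}_{\ell^2(\mathrm{D}(\mathcal{A}))}^2$, with time-integral $O(\lambda_{n+1}^{-1})$. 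After absorbing the $\abs{\mathcal{A}\Psi}^2$-terms on the left, estimating $\sup_t\abs{N(t)}$ by BDG and absorbing the resulting $\varepsilon\,\mathbb{E}\sup_t\norm{\Psi(t)}^2$, and noting $\norm{\Psi(0)}^2=\norm{(P_m-P_n)\Phi_0}^2\to0$ as $n\to\infty$, the stochastic Gronwall lemma yields~\eqref{equ:lim sup cauchy}.

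For part~(ii), I would use the inclusion $A_n(\tau,S)\subseteq\{\sup_{[0,\tau\wedge S]}\norm{\Phi^n}^2>\norm{\Phi_0^n}^2+\tfrac12(M-1)^2\}\cup\{\int_0^{\tau\wedge S}\abs{\mathcal{A}\Phi^n}^2\ds>\tfrac12(M-1)^2\}$ and bound the two probabilities separately. For the first, Itô gives $\dif\norm{\Phi^n}^2+\abs{\mathcal{A}\Phi^n}^2\dt\le C(1+\norm{\Phi^n}^6)\dt+\dif M_n$, whose drift is bounded by $C(1+(M+\norm{\Phi_0})^6)$ on $[0,\tau]$ for $\tau\in\mathcal{T}_n^{M,T}$; hence $\sup_{[0,\tau\wedge S]}\norm{\Phi^n}^2\le\norm{\Phi_0^n}^2+C_{M,\Phi_0}S+\sup_{[0,\tau\wedge S]}\abs{M_n}$, and since $\langle M_n\rangle_{\tau\wedge S}\lesssim_{M,\Phi_0}S$ (because $\norm{\Phi^n}$ and $\norm{g(\Phi^n)}_{\ell^2(\mathcal{V})}$ are bounded on $[0,\tau]$), Doob/BDG with Chebyshev make this probability tend to $0$ as $S\to0$, uniformly in $n$ and $\tau$. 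For the second — the delicate one — I would pass to the mild formulation
\[
\Phi^n(t)=e^{-t\mathcal{A}}\Phi_0^n-\int_0^t e^{-(t-s)\mathcal{A}}P_n\big(\mathcal{B}(\Phi^n)+\mathcal{R}(\Phi^n)\big)\ds+\int_0^t e^{-(t-s)\mathcal{A}}P_n g(\Phi^n)\,\dW,
\]
and use $L^2$-maximal regularity for the self-adjoint operator $\mathcal{A}$. The linear part gives $\int_0^S\abs{\mathcal{A}e^{-t\mathcal{A}}\Phi_0^n}^2\dt=\tfrac12\big(\norm{\Phi_0^n}^2-\norm{e^{-S\mathcal{A}}\Phi_0^n}^2\big)\le\tfrac12\big(\norm{\Phi_0}^2-\norm{e^{-S\mathcal{A}}\Phi_0}^2\big)$, which tends to $0$ as $S\to0$ by strong continuity of the semigroup on $\mathcal{V}$, uniformly in $n$. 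The deterministic Duhamel term obeys $\int_0^{\tau\wedge S}\abs{\mathcal{A}\int_0^t e^{-(t-s)\mathcal{A}}P_n\big(\mathcal{B}(\Phi^n)+\mathcal{R}(\Phi^n)\big)\ds}^2\dt\le\int_0^{\tau\wedge S}\abs{\mathcal{B}(\Phi^n)+\mathcal{R}(\Phi^n)}^2\ds\lesssim_{M,\Phi_0}S^{1/2}$ on $\mathcal{T}_n^{M,T}$, using~\eqref{equ:H norm B},~\eqref{equ:est R v in v} and $\int_0^\tau\abs{\mathcal{A}\Phi^n}^2\ds\le(M+\norm{\Phi_0})^2$. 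The stochastic convolution obeys $\mathbb{E}\int_0^{\tau\wedge S}\abs{\mathcal{A}\int_0^{t\wedge\tau}e^{-(t-s)\mathcal{A}}P_n g(\Phi^n)\,\dW}^2\dt\le\tfrac12\,\mathbb{E}\int_0^{\tau\wedge S}\norm{g(\Phi^n)}_{\ell^2(\mathcal{V})}^2\ds\lesssim_{M,\Phi_0}S$, which follows from the identity $\int_0^\infty\abs{\mathcal{A}e^{-r\mathcal{A}}v}^2\dr=\tfrac12\norm{v}^2$ and stochastic Fubini. Combining the three bounds, $\int_0^{\tau\wedge S}\abs{\mathcal{A}\Phi^n}^2\ds\le\omega(S)+3\,\mathcal{Z}_n$ with $\omega(S)\to0$ deterministically and $\mathbb{E}\mathcal{Z}_n\lesssim_{M,\Phi_0}S$, so Chebyshev gives the claim.

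The main obstacle is exactly the dissipation bound in part~(ii): at the $\mathcal{V}$-level there is no cancellation, and a pure energy argument yields only $\int_0^{\tau\wedge S}\abs{\mathcal{A}\Phi^n}^2\ds\lesssim\norm{\Phi_0^n}^2+o_{S\to0}(1)$, which is too weak because the threshold in $A_n(\tau,S)$ is only $\norm{\Phi_0^n}^2+(M-1)^2$. The mild-formulation/maximal-regularity argument above replaces this by a genuine $o_{S\to0}(1)$ bound, but one must verify that all the constants stay uniform in the Galerkin level $n$ — which they do, since $P_n$ commutes with $\mathcal{A}$ and is a contraction. The analogous difficulty in part~(i), namely that $\inpro{\mathcal{B}(\Psi,\Phi^m)+\mathcal{B}(\Phi^n,\Psi)}{\mathcal{A}\Psi}$ does not vanish, is resolved by absorption into $\abs{\mathcal{A}\Psi}^2$ at the price of a coefficient whose time-integral is finite only because $\tau\in\mathcal{T}_{m,n}^{M,T}$; this is precisely why a stochastic Gronwall argument, rather than a deterministic one, is indispensable here.
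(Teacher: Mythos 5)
Your part (i) is correct and is essentially the paper's own argument: the same splitting of $P_m\mathcal{B}(\Phi^m)-P_n\mathcal{B}(\Phi^n)$ (and of the $\mathcal{R}$- and $g$-terms) into difference pieces plus a projection defect, absorption of $\abs{\mathcal{A}\Psi}^2$ via \eqref{equ:est B with A} and \eqref{equ:est Rv1 minus v2}, Poincar\'e decay for the defects through \eqref{equ:V norm B}, \eqref{equ:est R vv} and \eqref{equ:cond G}, BDG for the martingale, and the stochastic Gronwall lemma with $\norm{(P_m-P_n)\Phi_0}^2\to0$ supplying the vanishing initial term.

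Part (ii) is where you genuinely depart from the paper, and the obstacle you flag is real. The paper argues by the direct energy estimate: It\^o for $\norm{\Phi^n}^2$, Young's inequality to peel $\abs{\mathcal{A}\Phi^n}^2$ off the drift, and then the Chebyshev step $\bb{P}[A_n(\tau,S)]\le\bb{P}\big[C\int_0^{S\wedge\tau}(1+\norm{\Phi^n}^2+\norm{\Phi^n}^{10})\,\dt>(M-1)^2\big]\le CS$. The set inclusion behind that step would require, pathwise, that the excess of $\sup_{t\le S\wedge\tau}\norm{\Phi^n(t)}^2+\int_0^{S\wedge\tau}\abs{\mathcal{A}\Phi^n}^2\,\ds$ over $\norm{\Phi_0^n}^2$ be dominated by the drift plus the running supremum of the martingale; but the energy identity leaves the additional term $\norm{\Phi_0^n}^2-\norm{\Phi^n(S\wedge\tau)}^2$, which is (up to small corrections) a multiple of $\int_0^{S\wedge\tau}\abs{\mathcal{A}\Phi^n}^2\,\ds$ itself — circular. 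And since the supremum includes $t=0$, one has $\{\int_0^{S\wedge\tau}\abs{\mathcal{A}\Phi^n}^2\,\ds>(M-1)^2\}\subseteq A_n(\tau,S)$, so the uniform-in-$n$ smallness of the dissipation integral must be proved and does not follow from the energy identity alone when $\Phi_0\in\mathcal{V}\setminus\mathrm{D}(\mathcal{A})$ (already for $\mathcal{B}=\mathcal{R}=g=0$ the decay of $\tfrac12(\norm{\Phi_0^n}^2-\norm{e^{-S\mathcal{A}}\Phi_0^n}^2)$ is $o(1)$ but not $O(S)$, with a rate depending on the spectral profile of $\Phi_0$ rather than on $\norm{\Phi_0}$). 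Your mild-formulation argument supplies exactly the missing ingredient: the identity $\int_0^S\abs{\mathcal{A}e^{-t\mathcal{A}}\Phi_0^n}^2\,\dt=\tfrac12(\norm{\Phi_0^n}^2-\norm{e^{-S\mathcal{A}}\Phi_0^n}^2)\le\tfrac12(\norm{\Phi_0}^2-\norm{e^{-S\mathcal{A}}\Phi_0}^2)\to0$ uniformly in $n$, together with deterministic $L^2$ maximal regularity for the Duhamel term (which is $O(\sqrt{S})$ on $\mathcal{T}_n^{M,T}$ by \eqref{equ:H norm B}, \eqref{equ:est R v in v} and Cauchy--Schwarz) and the It\^o-isometry bound $O(S)$ for the stochastic convolution. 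This costs more than the paper's two lines but is the argument that actually closes the estimate; what the paper's route buys is brevity, at the price of eliding precisely this point. Two cosmetic items: once the $\mathcal{R}$-term is estimated via \eqref{equ:est Rv1 v2} the drift exponent is $\norm{\Phi^n}^{10}$ rather than $\norm{\Phi^n}^6$ (immaterial, since everything is bounded on $[0,\tau]$ for $\tau\in\mathcal{T}_n^{M,T}$), and the It\^o isometry for the stopped stochastic convolution should be applied to the integrand $\one_{s\le\tau}\,g(\Phi^n(s))$.
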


\begin{proof}
For $m>n$, let $\Phi^{m,n}:= \Phi^m-\Phi^n$. We subtract equation \eqref{equ:galerkin} for $\Phi^n$ from that for $\Phi^m$ to obtain
	\begin{align*}
		\mathrm{d} \Phi^{m,n} + \mathcal{A} \Phi^{m,n} \dt 
		&=
		\left[ P_n \mathcal{B}(\Phi^n)- P_m\mathcal{B}(\Phi^m) + P_n \mathcal{R}(\Phi^n) - P_m \mathcal{R}(\Phi^m) \right] \dt 
		\\
		&\quad +
		\sum_{k=1}^\infty \left[P_m g_k(\Phi^m)- P_n g_k(\Phi^n)\right] \mathrm{d}\beta_k,
		\\
		\Phi^{m,n}(0) &= (P_m-P_n)\Phi_0.
	\end{align*}
	By It\^o's lemma applied to $\norm{\Phi^{m,n}}^2$, we have 
	\begin{align}\label{equ:ito d Phi mn}
		&\mathrm{d} \norm{\Phi^{m,n}}^2
		+
		2 \abs{\mathcal{A} \Phi^{m,n}}^2 \dt 
		\nonumber\\
		&=
		2\inpro{P_n \mathcal{B}(\Phi^n)- P_m\mathcal{B}(\Phi^m)}{\mathcal{A}\Phi^{m,n}} \dt
		+ 2\inpro{ P_n \mathcal{R}(\Phi^n) - P_m \mathcal{R}(\Phi^m)}{\mathcal{A} \Phi^{m,n}} \dt
		\nonumber\\
		&\quad
		+
		\sum_{k=1}^\infty \norm{P_m g_k(\Phi^m)- P_n g_k(\Phi^n)}^2 \dt
		+
		2\sum_{k=1}^\infty \inpro{P_m g_k(\Phi^m)- P_n g_k(\Phi^n)}{\mathcal{A} \Phi^{m,n}} \mathrm{d}\beta_k.
	\end{align}
Let $\tau\in \mathcal{T}_{m,n}^{M,T}$ and let $\tau_a$ and $\tau_b$ be stopping times such that $0\leq \tau_a\leq \tau_b\leq \tau$. Integrating \eqref{equ:ito d Phi mn} over $[\tau_a,r]$, then taking supremum over $r\in [\tau_a,\tau_b]$ and applying the expected value, we obtain by the triangle inequality,
\begin{align}\label{equ:I1 to I4}
	&\bb{E} \left[ \sup_{t\in [\tau_a,\tau_b]} \norm{\Phi^{m,n}(t)}^2 \right] 
	+
	2\bb{E} \left[ \int_{\tau_a}^{\tau_b} \abs{\mathcal{A}\Phi^{m,n}(t)}^2 \dt \right]
	\nonumber\\
	&\leq
	\bb{E}\left[ \norm{\Phi^{m,n}(\tau_a)}^2 \right]
	+
	2\bb{E} \left[ \int_{\tau_a}^{\tau_b} \abs{\inpro{P_m \mathcal{B}(\Phi^m)- P_n \mathcal{B}(\Phi^n)}{\mathcal{A} \Phi^{m,n}}} \dt \right]
	\nonumber\\
	&\quad
	+
	\bb{E} \left[ \int_{\tau_a}^{\tau_b} \abs{\inpro{P_m \mathcal{R}(\Phi^m)- P_n \mathcal{R}(\Phi^n)}{\mathcal{A} \Phi^{m,n}}} \dt \right]
	+
	\bb{E} \left[ \int_{\tau_a}^{\tau_b} \sum_{k=1}^\infty \norm{P_m g_k(\Phi^m)- P_n g_k(\Phi^n)}^2 \dt \right]
	\nonumber\\
	&\quad
	+
	2 \bb{E} \left[ \sup_{r\in [\tau_a,\tau_b]} \left| \sum_{k=1}^\infty \int_{\tau_a}^r \inpro{P_m g_k(\Phi^m)- P_n g_k(\Phi^n)}{\mathcal{A} \Phi^{m,n}}\, \mathrm{d}\beta_k \right| \right]
	\nonumber\\
	&=: \bb{E}\left[ \norm{\Phi^{m,n}(\tau_a)}^2 \right] +I_1+ \ldots+ I_4.
\end{align}
Each term $I_j$, $j=1,2,3,4$, will be estimated next. Firstly, we consider the term $I_1$. Note that we can write
\begin{align*}
	P_m \mathcal{B}(\Phi^m)- P_n \mathcal{B}(\Phi^n)
	=
	P_m \mathcal{B}(\Phi^{m,n}, \Phi^m) + P_m \mathcal{B}(\Phi^n, \Phi^{m,n}) + (P_m-P_n) \mathcal{B}(\Phi^n).
\end{align*}
By \eqref{equ:est B with A}, we have 
\begin{align}\label{equ:I1 part 1}
	\abs{\inpro{\mathcal{B}(\Phi^{m,n}, \Phi^m)}{\mathcal{A} \Phi^{m,n}}}
	&\leq
	C \norm{\Phi^{m,n}} \abs{\mathcal{A} \Phi^m} \abs{\mathcal{A} \Phi^{m,n}}
	\nonumber\\
	&\leq
	 C \norm{\Phi^{m,n}}^2 \abs{\mathcal{A} \Phi^m}^2 + \frac19 \abs{\mathcal{A} \Phi^{m,n}}^2,
\end{align}
where in the last step we used Young's inequality. Similarly, we also have
\begin{align}\label{equ:I1 part 2}
	\abs{\inpro{\mathcal{B}(\Phi^{n}, \Phi^{m,n})}{\mathcal{A} \Phi^{m,n}}}
	&\leq
	C \norm{\Phi^n} \norm{\Phi^{m,n}}^{\frac12} \abs{\mathcal{A} \Phi^{m,n}}^{\frac32}
	\nonumber\\
	&\leq
	C \norm{\Phi^{m,n}}^2 \norm{\Phi^n}^4 + \frac19 \abs{\mathcal{A} \Phi^{m,n}}^2,
\end{align}
Furthermore, by Young's inequality, Lemma~\ref{lem:poincare}, and estimate~\eqref{equ:V norm B}, we obtain
\begin{align}\label{equ:I1 part 3}
	\abs{\inpro{(P_m-P_n) \mathcal{B}(\Phi^n)}{\mathcal{A} \Phi^{m,n}}}
	&\leq
	C \abs{Q_n \mathcal{B}(\Phi^n)}^2
	+
	\frac19 \abs{\mathcal{A} \Phi^{m,n}}^2 
	\nonumber \\
	&\leq
	C \lambda_n^{-\frac14} \abs{Q_n \mathcal{B}(\Phi^n)}_{\mathrm{D}(\mathcal{A}^{\frac18})}^2
	+
	\frac19 \abs{\mathcal{A} \Phi^{m,n}}^2 
	\nonumber\\
	&\leq
	C \lambda_n^{-\frac14} \norm{\Phi^n}^2 \abs{\mathcal{A}\Phi^n}^2 + \frac19 \abs{\mathcal{A} \Phi^{m,n}}^2.
\end{align}
Combining \eqref{equ:I1 part 1}, \eqref{equ:I1 part 2}, and \eqref{equ:I1 part 3}, we deduce that for the term $I_1$ in \eqref{equ:I1 to I4},
\begin{align}\label{equ:est I1}
	\abs{I_1}
	&\leq
	C \bb{E}\left[ \int_{\tau_a}^{\tau_b} \norm{\Phi^{m,n}}^2 \Big( \abs{\mathcal{A} \Phi^{m}}^2 + \norm{\Phi^n}^4 \Big) + \lambda_n^{-\frac12} \norm{\Phi^n}^2 \abs{\mathcal{A}\Phi^n}^2  \dt \right] 
	+
	\frac13 \bb{E} \left[ \int_{\tau_a}^{\tau_b} \abs{\mathcal{A} \Phi^{m,n}}^2 \dt\right].
\end{align}
Next, we consider the term $I_2$ in \eqref{equ:I1 to I4}. To this end, we write
\begin{align*}
	P_m \mathcal{R}(\Phi^m)- P_n \mathcal{R}(\Phi^n)
	=
	P_m \left[\mathcal{R}(\Phi^m)- \mathcal{R}(\Phi^n)\right] + (P_m-P_n) \mathcal{R}(\Phi^n).
\end{align*}
By \eqref{equ:est Rv1 v2} for $d\leq 3$ and Young's inequality, we have
\begin{align}\label{equ:I2 part 1}
	\abs{\inpro{\mathcal{R}(\Phi^m)- \mathcal{R}(\Phi^n)}{\mathcal{A} \Phi^{m,n}}}
	&\leq
	C \left(1+ \norm{\Phi^m}^4+ \norm{\Phi^n}^4 \right) \norm{\Phi^{m,n}}^2 + \frac19 \abs{\mathcal{A} \Phi^{m,n}}^2.
\end{align}
Moreover, by an argument similar to that in~\eqref{equ:I1 part 3}, we obtain
\begin{align}\label{equ:I2 part 2}
	\abs{\inpro{(P_m-P_n) \mathcal{R}(\Phi^n)}{\mathcal{A} \Phi^{m,n}}}
	&\leq
	C \abs{Q_n \mathcal{R}(\Phi^n)}^2
	+
	\frac19 \abs{\mathcal{A} \Phi^{m,n}}^2 
	\nonumber\\
	&\leq
	C \lambda_n^{-\frac14} \abs{Q_n \mathcal{R}(\Phi^n)}_{\mathrm{D}(\mathcal{A}^{\frac18})}^2 + \frac19 \abs{\mathcal{A} \Phi^{m,n}}^2 
	\nonumber\\
	&\leq
	C\lambda_n^{-\frac14} \left(1+ \norm{\Phi^n}^4\right) \abs{\mathcal{A} \Phi^n}^2 
	+
	\frac19 \abs{\mathcal{A} \Phi^{m,n}}^2.
\end{align}
Thus, we deduce an estimate for the term $I_2$ in~\eqref{equ:I1 to I4}:
\begin{align}\label{equ:est I2}
	\abs{I_2} 
	&\leq
	C \bb{E}\left[ \int_{\tau_a}^{\tau_b} \norm{\Phi^{m,n}}^2 \left( 1+ \norm{\Phi^m}^4 + \norm{\Phi^n}^4  \right) \dt \right]
	\nonumber\\
	&\quad
	+
	C\bb{E} \left[ \int_{\tau_a}^{\tau_b} \lambda_n^{-\frac14} \left(1+ \norm{\Phi^n}^4\right) \abs{\mathcal{A} \Phi^n}^2 \dt \right]
	+
	\frac13 \bb{E} \left[ \int_{\tau_a}^{\tau_b} \abs{\mathcal{A} \Phi^{m,n}}^2 \dt\right].
\end{align}
The term $I_3$ in \eqref{equ:I1 to I4} will be estimated next. To this end, note that by the triangle inequality, the Lipschitz property of $g$ assumed in Condition~(G), and Lemma~\ref{lem:poincare}, we have
\begin{align}\label{equ:sum PmPn g}
	\sum_{k=1}^\infty \norm{P_m g_k(\Phi^m)- P_n g_k(\Phi^n)}^2
	&\leq
	\sum_{k=1}^\infty \norm{g_k(\Phi^m)- g_k(\Phi^n)}^2 + \norm{Q_n g_k(\Phi^n)}^2
	\nonumber\\
	&\leq
	C \norm{\Phi^{m,n}}^2 + C \lambda_n^{-1} \sum_{k=1}^\infty \abs{\mathcal{A} g_k(\Phi^n)}^2
	\nonumber\\
	&\leq
	C \norm{\Phi^{m,n}}^2 + C\lambda_n^{-1} \big(1+\abs{\mathcal{A} \Phi^n}^2\big).
\end{align}
Therefore,
\begin{align}\label{equ:est I3}
	\abs{I_3}
	&\leq
	C \bb{E} \left[\int_{\tau_a}^{\tau_b} \left( \norm{\Phi^{m,n}}^2 + \lambda_n^{-1} \big(1+\abs{\mathcal{A} \Phi^n}^2\big) \right) \dt \right].
\end{align}
The term $I_4$ is a stochastic term, which we will estimate using the Burkholder--Davis--Gundy inequality and \eqref{equ:sum PmPn g} to obtain
\begin{align}\label{equ:est I4}
	\abs{I_4}
	&\leq
	C \bb{E}\left[ \int_{\tau_a}^{\tau_b} \sum_{k=1}^\infty \abs{\inpro{P_m g_k(\Phi^m)- P_n g_k(\Phi^n)}{\mathcal{A} \Phi^{m,n}}}^2 \dt\right]^{\frac12}
	\nonumber\\
	&\leq
	C \bb{E} \left[ \int_{\tau_a}^{\tau_b} \norm{\Phi^{m,n}}^2 \sum_{k=1}^\infty \norm{P_m g_k(\Phi^m)-P_n g_k(\Phi^n)}^2 \dt \right]^{\frac12}
	\nonumber\\
	&\leq
	C \bb{E}\left[ \int_{\tau_a}^{\tau_b} \norm{\Phi^{m,n}}^2 \left(\norm{\Phi^{m,n}}^2 + \lambda_n^{-1} \big(1+ \abs{\mathcal{A}\Phi^n}^2 \big) \right) \dt \right]^{\frac12}
	\nonumber\\
	&\leq
	\frac12 \bb{E}\left[ \sup_{t\in [\tau_a,\tau_b]} \norm{\Phi^{m,n}}^2 \right] 
	+
	C\bb{E} \left[\int_{\tau_a}^{\tau_b} \left( \norm{\Phi^{m,n}}^2 + \lambda_n^{-1} \big(1+ \abs{\mathcal{A}\Phi^n}^2 \big) \right) \dt \right],
\end{align}
where in the last step we applied Young's inequality. Altogether, from estimates~\eqref{equ:est I1}, \eqref{equ:est I2}, \eqref{equ:est I3}, and \eqref{equ:est I4}, we continue from \eqref{equ:I1 to I4} to infer that
\begin{align*}
	&\bb{E} \left[ \sup_{t\in [\tau_a,\tau_b]} \norm{\Phi^{m,n}(t)}^2 \right] 
	+
	\bb{E} \left[ \int_{\tau_a}^{\tau_b} \abs{\mathcal{A}\Phi^{m,n}(t)}^2 \dt \right]
	\\
	&\leq 
	C \bb{E}\left[ \norm{\Phi^{m,n}(\tau_a)}^2 \right]
	+  
	C \bb{E}\left[\int_{\tau_a}^{\tau_b} \lambda_n^{-\frac14} \Big(1+\norm{\Phi^n}^4\Big) \Big(1+\abs{\mathcal{A} \Phi^n}^2 \Big) \dt \right]
	\\
	&\quad
	+
	C \bb{E}\left[ \int_{\tau_a}^{\tau_b} \norm{\Phi^{m,n}}^2 \Big(1+ \norm{\Phi^m}^4+ \norm{\Phi^n}^4 + \abs{\mathcal{A} \Phi^{m}}^2 + \lambda_n^{-\frac14} \norm{\Phi^n}^2 \abs{\mathcal{A}\Phi^n}^2 \Big) \dt \right]
	\\
	&\leq
	C \bb{E}\left[ \norm{\Phi^{m,n}(\tau_a)}^2 \right]
	+
	C \bb{E}\left[ \int_{\tau_a}^{\tau_b} \norm{\Phi^{m,n}}^2 \Big(1 + \abs{\mathcal{A} \Phi^{m}}^2 \Big) \dt \right]
	+
	C \bb{E} \left[ \int_{\tau_a}^{\tau_b} \lambda_n^{-\frac14} \Big(1+\abs{\mathcal{A} \Phi^n}^2 \Big) \dt \right],
\end{align*}
where in the last step we used the fact that $\tau_a,\tau_b\in \mathcal{T}_m^{M,T}\cap \mathcal{T}_n^{M,T}$ and \eqref{equ:Tn MT} to bound $\norm{\Phi^m}$ and $\norm{\Phi^n}$. Here, $C$ depends on $M$, but is independent of $m$, $n$, $\tau_a$, or $\tau_b$. By the same token, we have
\begin{align*}
	\int_0^\tau \left(1+ \abs{\mathcal{A}\Phi^m (t)}^2 \right) \dt \leq \left(M+ \norm{\Phi^0} \right)^2 + T, \; \text{ a.s.}
\end{align*}
We are now in the position to apply the Gronwall lemma for stochastic processes (Lemma~\ref{lem:gronwall}) with $b=1+\abs{\mathcal{A}\Phi^m}^2$, $u= \norm{\Phi^{m,n}}^2$, $v=\abs{\mathcal{A}\Phi^{m,n}}^2$, and $w=\lambda_n^{-\frac14} \big(1+\abs{\mathcal{A} \Phi^n}^2 \big)$. This results in
\begin{align*}
	\bb{E} \left[ \sup_{t\in [\tau_a,\tau_b]} \norm{\Phi^{m,n}}^2 \right] 
	+
	\bb{E} \left[ \int_{\tau_a}^{\tau_b} \abs{\mathcal{A}\Phi^{m,n}}^2 \dt \right]
	\leq
	C\bb{E} \left[\norm{\Phi^0}^2\right]
	+
	C\lambda_n^{-\frac14},
\end{align*}
where $C$ depends on $M$ and $T$, but is independent of $m$, $n$, or $\tau$. Thus, taking supremum over $\tau \in \mathcal{T}_{m,n}^{M,T}$, then supremum over all $m>n$, and letting $n\to\infty$, we obtain \eqref{equ:lim sup cauchy}.

Next, we aim to show \eqref{equ:lim sup prob}. To this end, by It\^o's lemma applied to $\norm{\Phi^n}$, we have
	\begin{align}\label{equ:ito d Phi n}
	\mathrm{d} \norm{\Phi^{n}}^2
	+
	2 \abs{\mathcal{A} \Phi^{n}}^2 \dt 
	&=
	\Big(-2\inpro{\mathcal{B}(\Phi^n)}{\mathcal{A}\Phi^{n}} 
	+ 2\inpro{\mathcal{R}(\Phi^n)}{\mathcal{A} \Phi^{n}} 
	+
	\sum_{k=1}^\infty \norm{P_n g_k(\Phi^n)}^2 \Big) \dt
	\nonumber	\\
	&\quad
	+
	2\sum_{k=1}^\infty \inpro{g_k(\Phi^n)}{\mathcal{A} \Phi^{n}} \mathrm{d}\beta_k.
\end{align}
Let $\tau \in \mathcal{T}_n^{M,T}$ and $S>0$. Integrating \eqref{equ:ito d Phi n} over $[0,t]$, then taking supremum over $t\in [0,S\wedge \tau]$ and applying the expected value, we obtain
\begin{align}\label{equ:sup Phi n V}
	&\bb{E}\left[\sup_{t\in [0,S\wedge \tau]} \norm{\Phi^n}^2 \right]
	+
	2 \bb{E}\left[\int_0^{S\wedge\tau} \abs{\mathcal{A} \Phi^n}^2 \dt \right]
	\nonumber\\
	&\leq
	\norm{\Phi_0}^2
	-
	2 \bb{E}\left[\int_0^{S\wedge\tau} \abs{\inpro{\mathcal{B}(\Phi^n)}{\mathcal{A}\Phi^n}}\dt \right]
	+
	2 \bb{E} \left[\int_0^{S\wedge\tau} \abs{\inpro{\mathcal{R}(\Phi^n)}{\mathcal{A}\Phi^n}}\dt \right]
	\nonumber\\
	&\quad
	+
	\bb{E} \left[\int_0^{S\wedge\tau} \sum_{k=1}^\infty \norm{P_m g_k(\Phi^n)}^2 \dt\right]
	+
	2 \bb{E} \left[\sup_{r\in [0, S\wedge\tau]} \abs{\sum_{k=1}^\infty \int_0^r 2\inpro{g_k(\Phi^n)}{\Phi^n} \mathrm{d}\beta_k} \right].
	\nonumber\\
	&=: \norm{\Phi_0}^2 + J_1+J_2+J_3+J_4.
\end{align}
We need to estimate each term on the last line. For the term $J_1$, by \eqref{equ:est B with A} for $d\leq 3$ and Young's inequality, we have
\begin{align}\label{equ:est J1}
	\abs{J_1}
	&\leq
	C\bb{E}\left[\int_0^{S\wedge\tau} \norm{\Phi^n}^{\frac32} \abs{\mathcal{A} \Phi^n}^{\frac32} \dt \right]
	\leq
	C\bb{E}\left[\int_0^{S\wedge\tau} \norm{\Phi^n}^{6} \dt\right] 
	+
	\frac14 \bb{E} \left[\int_0^{S\wedge\tau} \abs{\mathcal{A} \Phi^n}^2 \dt \right].
\end{align}
Next, for the term $J_2$, applying \eqref{equ:est Rv1 v2} and Young's inequality yields
\begin{align}\label{equ:est J2}
	J_2
	&\leq
	C\bb{E}\left[\int_0^{S\wedge\tau} \left(1+ \abs{\Phi^n}^{\frac12} \norm{\Phi^n} \abs{\mathcal{A} \Phi^n}^{\frac12}\right) \norm{\Phi^n} \abs{\mathcal{A} \Phi^n} \dt \right]
	\nonumber\\
	&\leq
	C\bb{E}\left[\int_0^{S\wedge\tau} \norm{\Phi^n}^2 \dt\right] 
	+
	C\bb{E}\left[\int_0^{S\wedge\tau} \norm{\Phi^n}^{10} \dt\right] 
	+
	\frac14 \bb{E} \left[\int_0^{S\wedge\tau} \abs{\mathcal{A} \Phi^n}^2 \dt \right].
\end{align}
For the term $J_3$, we use the Lipschitz assumptions on $g$ to obtain
\begin{align}\label{equ:est J3}
	J_3
	&\leq
	C\bb{E}\left[\int_0^{S\wedge\tau} \left(1+\norm{\Phi^n}^2\right) \dt\right].
\end{align}
Finally, for the term $J_4$, we apply the Burkholder--Davis--Gundy inequality to have
\begin{align}\label{equ:est J4}
	J_4
	&\leq
	C\bb{E}\left[ \int_0^{S\wedge \tau} \norm{\Phi^n}^2 \sum_{k=1}^\infty \norm{g_k(\Phi^n)}^2 \dt\right]^{\frac12}
	\nonumber\\
	&\leq
	\frac12 \bb{E} \left[\sup_{t\in [0, S\wedge\tau]} \norm{\Phi^n}^2 \right] 
	+
	C\bb{E} \left[\int_0^{S\wedge\tau} \left(1+\norm{\Phi^n}^2\right) \dt \right]
\end{align}
Substituting \eqref{equ:est J1}, \eqref{equ:est J2}, \eqref{equ:est J3}, and \eqref{equ:est J4} back into \eqref{equ:sup Phi n V}, then rearranging the terms, we obtain
\begin{align*}
	&\bb{E}\left[\sup_{t\in [0,S\wedge \tau]} \norm{\Phi^n}^2 \right]
	+
	2 \bb{E}\left[\int_0^{S\wedge\tau} \abs{\mathcal{A} \Phi^n}^2 \dt \right]
	\nonumber\\
	&\leq
	\norm{\Phi_0}^2
	+
	C\bb{E}\left[\int_0^{S\wedge\tau} \left(1+\norm{\Phi^n}^2 + \norm{\Phi^n}^{10} \right) \dt\right],
\end{align*}
where $C$ is independent of $\tau, S$, and $n$. Therefore, by applying Chebyshev's inequality, for the set $A_n(\tau,S)$ defined in \eqref{equ:event An tau} we have for any $T>0$ and $M>1$,
\begin{align*}
	\bb{P}\left[ A_n(\tau,S) \right]
	&\leq
	\bb{P} \left[C \int_0^{S\wedge\tau} \left(1+ \norm{\Phi^n}^2 + \norm{\Phi^n}^{10} \right) \dt > (M-1)^2 \right]
	\\
	&\leq
	C(M-1)^{-2} \, \bb{E} \left[\int_0^{S\wedge \tau} \left(1+ \norm{\Phi^n}^2 + \norm{\Phi^n}^{10} \right) \dt \right] 
	\leq C S,
\end{align*}
where $C=C(M)$, and in the last step we used the fact that $\tau\in \mathcal{T}_n^{M,T}$. Taking supremum over $\tau\in \mathcal{T}_n^{M,T}$, then over $n\in \bb{N}$, and taking the limit as $S\to 0$, we deduce \eqref{equ:lim sup prob}. This completes the proof of the proposition.
\end{proof}

We now prove the existence of a local strong pathwise solution to~\eqref{equ:equation}.

\begin{theorem}\label{the:strong sol}
Let $\Phi_0\in \mathcal{V}$ be a given initial data. Then there exists a local strong solution $(\Phi,\tau)$ to the problem~\eqref{equ:equation} in the sense of Definition~\eqref{def:strong sol}.
\end{theorem}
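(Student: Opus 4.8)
The plan is to construct the solution from the Galerkin approximations $\{\Phi^n\}$ of \eqref{equ:galerkin} by combining the uniform estimates and the Cauchy property established in Proposition~\ref{pro:hypotheses} with the abstract compactness/convergence machinery of Theorem~\ref{the:compactness}. First I would fix $M>1$ and $T>0$ and use the stopping times $\mathcal{T}_n^{M,T}$ from \eqref{equ:Tn MT} together with the event sets $A_n(\tau,S)$ from \eqref{equ:event An tau}: conclusion~(ii) of Proposition~\ref{pro:hypotheses} (the smallness of $\sup_n\sup_\tau\bb{P}[A_n(\tau,S)]$ as $S\to 0$) guarantees that the $\mathcal{T}_n^{M,T}$ do not degenerate to zero, while conclusion~(i) (the Cauchy property \eqref{equ:lim sup cauchy}) shows that $\{\Phi^n(\cdot\wedge\tau)\}$ is Cauchy in $L^2(\Omega;C([0,\cdot];\mathcal{V}))$ and that $\{\mathcal{A}\Phi^n\one_{t\le\tau}\}$ is Cauchy in $L^2(\Omega;L^2(0,\cdot;\mathcal{H}))$ along the admissible stopping times. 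I would then invoke Theorem~\ref{the:compactness} to extract a limiting process $\Phi$ and a strictly positive stopping time $\tau$ such that, after passing to the limit, $\Phi(\cdot\wedge\tau)$ satisfies the regularity \eqref{equ:reg local strong}, i.e. $\Phi(\cdot\wedge\tau)\in L^2(\Omega;C([0,\infty);\mathcal{V}))$ and $\Phi\one_{t\le\tau}\in L^2(\Omega;L^2_{\mathrm{loc}}([0,\infty);\mathrm{D}(\mathcal{A})))$, and $\Phi(\cdot\wedge\tau)$ is a predictable $\mathcal{H}$-valued process.

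Next I would pass to the limit in the Galerkin identity to verify \eqref{equ:int equal} in $\mathcal{H}$. The linear term $\int_0^{t\wedge\tau}\mathcal{A}\Phi^n\,\ds$ converges in $L^2(\Omega;\mathcal{H})$ by the $L^2$-convergence of $\mathcal{A}\Phi^n\one_{t\le\tau}$; the projections $P_n$ disappear because $P_n x\to x$ in $\mathcal{H}$ (and in $\mathcal{V}$, $\mathrm{D}(\mathcal{A})$) for every fixed $x$; the nonlinear terms $P_n\mathcal{B}(\Phi^n)$ and $P_n\mathcal{R}(\Phi^n)$ converge in $L^1$ in time, in $\mathcal{H}$, on $[0,t\wedge\tau]$ using the continuity estimates \eqref{equ:H norm B}, \eqref{equ:est R v in v} and the difference estimates \eqref{equ:est B with A}, \eqref{equ:est Rv1 minus v2} together with the strong convergence $\Phi^n\to\Phi$ in $C([0,\cdot];\mathcal{V})$ and $\mathcal{A}\Phi^n\to\mathcal{A}\Phi$ in $L^2(0,\cdot;\mathcal{H})$ up to the stopping time; and the stochastic integral $\sum_k\int_0^{t\wedge\tau}P_n g_k(\Phi^n)\,\mathrm{d}\beta_k$ converges in $L^2(\Omega;\mathcal{H})$ by the It\^o isometry and the Lipschitz condition~(G) on $g$ in $\mathcal{H}$. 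Hence $(\Phi,\tau)$ satisfies \eqref{equ:int equal} in $\mathcal{H}$, which, combined with the regularity just obtained, makes $(\Phi,\tau)$ a local strong pathwise solution in the sense of Definition~\ref{def:strong sol}.

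For uniqueness I would take two local strong solutions $(\Phi^{(1)},\tau^{(1)})$ and $(\Phi^{(2)},\tau^{(2)})$ with the same initial data, set $\Psi=\Phi^{(1)}-\Phi^{(2)}$ and $\tau=\tau^{(1)}\wedge\tau^{(2)}$, localise further by a stopping time on which $\norm{\Phi^{(i)}}^2+\int_0^{\cdot}\abs{\mathcal{A}\Phi^{(i)}}^2\,\ds$ is bounded, and apply It\^o's lemma to $\norm{\Psi}^2$. The cross terms $\inpro{\mathcal{B}(\Phi^{(1)})-\mathcal{B}(\Phi^{(2)})}{\mathcal{A}\Psi}$ and $\inpro{\mathcal{R}(\Phi^{(1)})-\mathcal{R}(\Phi^{(2)})}{\mathcal{A}\Psi}$ are handled exactly as in the estimates for $I_1$ and $I_2$ in the proof of Proposition~\ref{pro:hypotheses} (splitting $\mathcal{B}$ bilinearly, using \eqref{equ:est B with A} and \eqref{equ:est Rv1 minus v2}, and absorbing $\abs{\mathcal{A}\Psi}^2$ via Young's inequality), the noise difference is controlled by the Lipschitz property of $g$ in $\mathcal{V}$, and a Gronwall argument (deterministic after localisation, or Lemma~\ref{lem:gronwall}) forces $\Psi\equiv 0$ on $[0,\tau]$; a standard patching then gives uniqueness up to the common existence time.

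\medskip
The main obstacle I anticipate is the limit passage in the absence of any cancellation in the nonlinearity when testing against $\mathcal{A}\Phi$: one needs the $\mathrm{D}(\mathcal{A})$-level convergence $\mathcal{A}\Phi^n\to\mathcal{A}\Phi$ in $L^2$ in time to control $\mathcal{B}(\Phi^n)-\mathcal{B}(\Phi)$ and $\mathcal{R}(\Phi^n)-\mathcal{R}(\Phi)$ in $\mathcal{H}$, and this is precisely what Proposition~\ref{pro:hypotheses}(i) delivers only up to the stopping times $\mathcal{T}_{m,n}^{M,T}$ — so care is needed to ensure the limiting stopping time $\tau$ is genuinely positive (via Proposition~\ref{pro:hypotheses}(ii)) and that all convergences are along a single sequence of stopping times compatible with $\tau$, which is the role played by Theorem~\ref{the:compactness}.
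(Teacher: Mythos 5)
Your overall strategy is the paper's: apply Theorem~\ref{the:compactness} with $X=\mathcal{V}$, $Y=\mathrm{D}(\mathcal{A})$, feed it the two hypotheses verified in Proposition~\ref{pro:hypotheses}, extract a strictly positive stopping time $\tau$ and an a.s.\ limit $\Phi$ in $C([0,\tau];\mathcal{V})\cap L^2(0,\tau;\mathrm{D}(\mathcal{A}))$, and then pass to the limit term by term in the Galerkin identity. (The uniqueness argument you sketch is not part of this theorem --- it is the separate Theorem~\ref{the:unique} --- but it does no harm.)

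There is one step where your argument, as written, would fail: you claim that the nonlinear terms and the stochastic integral converge \emph{strongly}, e.g.\ ``$\sum_k\int_0^{t\wedge\tau}P_n g_k(\Phi^n)\,\mathrm{d}\beta_k$ converges in $L^2(\Omega;\mathcal{H})$ by the It\^o isometry and the Lipschitz condition (G).'' The It\^o isometry reduces this to showing $\bb{E}\int_0^{t\wedge\tau}\sum_k\abs{P_{n_\ell}g_k(\Phi^{n_\ell})-g_k(\Phi)}^2\,\ds\to 0$, i.e.\ to $L^2(\Omega\times[0,T])$ convergence of $\Phi^{n_\ell}$ to $\Phi$; but Theorem~\ref{the:compactness} only delivers \emph{almost sure} convergence in $\mathcal{E}(\tau)$ along the subsequence, and a.s.\ convergence does not upgrade to convergence in expectation without uniform integrability. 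The same issue arises for $\mathcal{B}$ and $\mathcal{R}$, whose $\mathcal{H}$-norms are controlled only by $\norm{\Phi^{n_\ell}}^3\abs{\mathcal{A}\Phi^{n_\ell}}$ and $(1+\norm{\Phi^{n_\ell}}^4)\norm{\Phi^{n_\ell}}^2$, quantities with no a priori moment bounds on all of $\Omega$. The paper resolves this precisely with the cutoff sets $\Omega_\ell$ from \eqref{equ:cauchy 5}: one proves pointwise convergence of the duality pairings $\inpro{P_{n_\ell}\mathcal{B}(\Phi^{n_\ell})}{v}$, $\inpro{P_{n_\ell}\mathcal{R}(\Phi^{n_\ell})}{v}$ for a.e.\ $(\omega,t)$, establishes uniform bounds of $\one_{\Omega_\ell}P_{n_\ell}\mathcal{B}(\Phi^{n_\ell})$ etc.\ in $L^2(\Omega\times[0,T];\mathcal{H})$ using \eqref{equ:H norm B}, \eqref{equ:est R v in v} and \eqref{equ:bdd Lp Omega}, and then invokes Lemma~\ref{lem:weak conv} to get \emph{weak} convergence $\one_{t\le\tau}\one_{\Omega_\ell}P_{n_\ell}\mathcal{B}(\Phi^{n_\ell})\rightharpoonup\one_{t\le\tau}\mathcal{B}(\Phi)$ in $L^2(\Omega;L^2(0,T;\mathcal{H}))$ (and likewise for $\mathcal{R}$ and $g$), which suffices to identify the limit equation by testing against $\one_K v$ for arbitrary measurable $K\subset\Omega\times[0,T]$ and $v\in\mathcal{H}$. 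So the missing ingredient in your write-up is the insertion of $\one_{\Omega_\ell}$ and the replacement of the strong-convergence claims by the weak-convergence argument via Lemma~\ref{lem:weak conv}; once that is done, your outline coincides with the paper's proof.
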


\begin{proof}
We proceed with a compactness argument by applying Theorem~\ref{the:compactness} with $X=\mathcal{V}$ and $Y=\mathrm{D}(\mathcal{A})$, noting the definition of $\mathcal{T}_n^{M,T}$ in \eqref{equ:Tn MT}. Since the hypotheses of this theorem have been shown to hold in Proposition~\ref{pro:hypotheses}, we infer the existence of a subsequence of the Galerkin solutions $\{\Phi^{n_\ell}\}_{\ell\in \bb{N}}$ to \eqref{equ:galerkin}, a strictly positive stopping time $\tau\leq T$, and a process $\Phi(\cdot)= \Phi(\,\cdot\wedge\tau)$ which is continuous in $\mathcal{V}$, such that
\begin{align}\label{equ:conv as}
	\lim_{\ell\to\infty} \left(\sup_{t\in [0,\tau]} \norm{\Phi^{n_\ell}(t)-\Phi(t)}^2 + \int_0^\tau \abs{\mathcal{A} \big(\Phi^{n_\ell}(t) -\Phi(t)\big)}^2 \dt \right) = 0\; \text{ a.s.}
\end{align}
Furthermore, \eqref{equ:cauchy 4} implies for any $p\in [1,\infty)$,
\begin{align}\label{equ:reg Phi}
	\Phi(\,\cdot\wedge\tau) \in L^p\big(\Omega; C([0,T]; \mathcal{V})\big)\;
	\text{ and }\;
	\Phi \one_{t\leq \tau} \in L^p \big(\Omega; L^2(0,T;\mathrm{D}(\mathcal{A}))\big),
\end{align}
while \eqref{equ:cauchy 5} confers the existence of a collection of measurable sets $\{\Omega_\ell\}_{\ell\in \bb{N}}$ with 
\begin{align}\label{equ:bdd Lp Omega}
	\sup_{\ell\in \bb{N}} \bb{E} \left[ \one_{\Omega_\ell} \left( \sup_{t\in [0,\tau]} \norm{\Phi^{n_\ell}(t)}^2 + \int_0^\tau \abs{\mathcal{A}\Phi^{n_\ell}(t)}^2 \dt \right)^{\frac{p}{2}} \right] < \infty.
\end{align}
Thus, given \eqref{equ:conv as} and \eqref{equ:bdd Lp Omega}, by Lemma~\ref{lem:weak conv} we have
\begin{align}
	\label{equ:one Phi conv}
	&\one_{t\leq \tau} \one_{\Omega_\ell} \Phi^{n_\ell} \rightharpoonup \one_{t\leq \tau} \Phi \quad \text{in } L^p\big(\Omega; L^2(0,T;\mathrm{D}(\mathcal{A}))\big),
	\\
	\label{equ:one wedge tau conv}
	&\one_{\Omega_\ell} \Phi^{n_\ell}(\,\cdot\wedge\tau) \overset{\ast}{\rightharpoonup} \Phi \quad \text{in } L^p\big(\Omega;L^\infty(0,T;\mathcal{V})\big).
\end{align}
Next, we consider the convergence of the first nonlinear term $P_{n_\ell}\mathcal{B}(\Phi^{n_\ell})$. For any $v\in \mathcal{V}$, we have
\begin{align*}
	\abs{\inpro{P_{n_\ell} \mathcal{B}(\Phi^{n_\ell})- \mathcal{B}(\Phi)}{v}}
	&\leq
	\abs{\inpro{\mathcal{B}(\Phi^{n_\ell}-\Phi, \Phi^{n_\ell})}{P_{n_\ell} v}}
	+
	\abs{\inpro{\mathcal{B}(\Phi, \Phi^{n_\ell}-\Phi)}{P_{n_\ell} v}}
	+
	\abs{\inpro{\mathcal{B}(\Phi)}{Q_{n_\ell} v}}
	\\
	&\leq
	C \norm{\Phi^{n_\ell}-\Phi} \norm{\Phi^{n_\ell}} \norm{v}
	+
	C \norm{\Phi} \norm{\Phi^{n_\ell}-\Phi} \norm{v}
	+
	C \norm{\Phi}^2 \abs{Q_{n_\ell} v}^{\frac12} \norm{Q_{n_\ell} v}^{\frac12}
	\\
	&\leq
	C \norm{\Phi^{n_\ell}-\Phi} \norm{v} \left(\norm{\Phi^{n_\ell}}+ \norm{\Phi}\right) + C\lambda_{n_\ell}^{-\frac14} \norm{\Phi}^2 \norm{v},
\end{align*}
where in the penultimate step we used \eqref{equ:est B 3d alt}, while in the last step we applied \eqref{equ:poincare Qn}. This estimate, together with \eqref{equ:conv as}, implies that given any $v\in\mathcal{V}$, for a.e. $(\omega,t)\in \Omega\times [0,T]$,
\begin{align}\label{equ:1 P Bv conv}
	\one_{t\leq \tau} \inpro{P_{n_\ell}\mathcal{B}(\Phi^{n_\ell})}{v} \to \one_{t\leq \tau} \inpro{\mathcal{B}(\Phi)}{v}\; \text{ as $\ell\to\infty$}.
\end{align}
Furthermore, we have
\begin{align}\label{equ:sup int P bdd}
	\sup_{\ell\in \bb{N}} \bb{E}\left[\one_{\Omega_{\ell}} \int_0^\tau \abs{P_{n_\ell}\mathcal{B}(\Phi^{n_\ell})}^2 \dt \right]
	&\leq
	C\sup_{\ell\in \bb{N}} \bb{E} \left[\one_{\Omega_\ell} \int_0^\tau \norm{\Phi^{n_\ell}}^3 \abs{\mathcal{A}\Phi^{n_\ell}} \dt \right]
	\nonumber\\
	&\leq
	C \sup_{\ell\in \bb{N}} \bb{E} \left[\one_{\Omega_\ell} \left(\sup_{s\in [0,\tau]} \norm{\Phi^{n_\ell}(s)}^2 \right) \left(\int_0^\tau \abs{\mathcal{A}\Phi^{n_\ell}}^2 \dt\right)\right]
	\nonumber\\
	&\leq
	C \sup_{\ell\in \bb{N}} \bb{E} \left[\one_{\Omega_\ell} \left(\sup_{s\in [0,\tau]} \norm{\Phi^{n_\ell}(s)}^4 + \left(\int_0^\tau \abs{\mathcal{A}\Phi^{n_\ell}}^2 \dt \right)^2 \right)\right] < \infty,
\end{align}
where we applied Young's inequality and \eqref{equ:bdd Lp Omega} in the last line. Altogether, \eqref{equ:1 P Bv conv}, \eqref{equ:sup int P bdd}, and Lemma~\ref{lem:weak conv} imply that
\begin{align}\label{equ:one PB conv}
	\one_{t\leq \tau} \one_{\Omega_\ell}  P_{n_\ell} \mathcal{B}(\Phi^{n_\ell}) \rightharpoonup \one_{t\leq \tau} \mathcal{B}(\Phi) \quad \text{in } L^2\big(\Omega;L^2(0,T;\mathcal{H})\big).
\end{align}
Now, we study the convergence of the second nonlinear term $P_{n_\ell} \mathcal{R}(\Phi^{n_\ell})$. For any $v\in \mathcal{V}$, by the triangle and the H\"older inequalities we have
\begin{align*}
	\abs{\inpro{P_{n_\ell} \mathcal{R}(\Phi^{n_\ell})- \mathcal{R}(\Phi)}{v}}
	&\leq
	\abs{\inpro{\mathcal{R}(\Phi^{n_\ell})- \mathcal{R}(\Phi)}{P_{n_\ell} v}}
	+
	\abs{\inpro{\mathcal{R}(\Phi)}{Q_{n_\ell} v}}
	\\
	&\leq
	C\left(1+\norm{\Phi^{n_\ell}}^2 + \norm{\Phi}^2\right) \norm{\Phi^{n_\ell}-\Phi} \abs{v}
	+
	C \left(1+\norm{\Phi}^2\right) \norm{\Phi} \abs{Q_{n_\ell} v}
	\\
	&\leq
	C\left(1+\norm{\Phi^{n_\ell}}^2 + \norm{\Phi}^2\right) \norm{\Phi^{n_\ell}-\Phi} \norm{v}
	+
	C\lambda_{n_\ell}^{-\frac12} \left(1+\norm{\Phi}^2\right) \norm{\Phi} \norm{v}.
\end{align*}
Here, in the penultimate step we utilised \eqref{equ:est Rv1 minus v2} and \eqref{equ:est R v in v}, while in the last step we applied \eqref{equ:poincare Qn}. Similarly to \eqref{equ:1 P Bv conv}, the above estimate, together with \eqref{equ:conv as}, implies that for a.e. $(\omega,t)\in \Omega\times [0,T]$,
\begin{align}\label{equ:PR conv}
	\one_{t\leq \tau} \inpro{P_{n_\ell} \mathcal{R}(\Phi^{n_\ell})}{v} \to 
	\one_{t\leq \tau} \inpro{\mathcal{R}(\Phi)}{v}
	\; \text{ as $\ell\to\infty$}.
\end{align}
Moreover, noting \eqref{equ:bdd Lp Omega}, we have the estimate
\begin{align}\label{equ:sup int PR}
	\sup_{\ell\in \bb{N}} \bb{E}\left[\one_{\Omega_{\ell}} \int_0^\tau \abs{P_{n_\ell}\mathcal{R}(\Phi^{n_\ell})}^2 \dt \right]
	&\leq
	C\sup_{\ell\in \bb{N}} \bb{E} \left[\one_{\Omega_\ell} \int_0^\tau \left(1+ \norm{\Phi^{n_\ell}}^4 \right) \norm{\Phi^{n_\ell}}^2 \dt \right]
	\nonumber\\
	&\leq
	C \sup_{\ell\in \bb{N}} \bb{E}\left[\one_{\Omega_\ell} \left( \sup_{s\in [0,\tau]} \norm{\Phi^{n_\ell}(s)}^2 + \norm{\Phi^{n_\ell}(s)}^6 \right) \right]
	<
	\infty.
\end{align}
Thus, \eqref{equ:PR conv}, \eqref{equ:sup int PR}, and Lemma~\ref{lem:weak conv} imply that
\begin{align}\label{equ:one PR conv}
	 \one_{t\leq \tau} \one_{\Omega_\ell} P_{n_\ell} \mathcal{R}(\Phi^{n_\ell}) \rightharpoonup \one_{t\leq \tau} \mathcal{R}(\Phi) \quad \text{in } L^2\big(\Omega;L^2(0,T;\mathcal{H})\big).
\end{align}
Finally, we consider the stochastic term. Noting the assumptions on $g$, we obtain
\begin{align*}
	\left(\sum_{k=1}^\infty \abs{P_{n_\ell} g_k(\Phi^{n_\ell})- g_k(\Phi)}^2 \right)^{\frac12}
	&\leq
	\left(\sum_{k=1}^\infty \abs{P_{n_\ell} g_k(\Phi^{n_\ell})- P_{n_\ell} g_k(\Phi)}^2 \right)^{\frac12} + \left(\sum_{k=1}^\infty \abs{Q_{n_\ell} g_k(\Phi)}^2 \right)^{\frac12}
	\\
	&\leq
	C\left( \norm{\Phi^{n_\ell}- \Phi} + \lambda_{n_\ell}^{-1} (1+\norm{\Phi})\right).
\end{align*}
This implies for a.e. $(\omega,t)\in \Omega\times [0,T]$, we have $\one_{t\leq \tau} P_{n_\ell} g(\Phi^{n_\ell}) \to \one_{t\leq \tau} g(\Phi)$ in $\ell^2(\mathcal{H})$. Furthermore, again by the assumptions on $g$ and \eqref{equ:bdd Lp Omega},
\begin{align*}
	\sup_{\ell\in \bb{N}} \bb{E}\left[\one_{\Omega_\ell} \int_0^\tau \sum_{k=1}^\infty \abs{P_{n_\ell} g_k(\Phi^{n_\ell})}^2 \dt \right]
	\leq
	C + C \sup_{\ell\in \bb{N}} \bb{E}\left[\one_{\Omega_\ell} \int_0^\tau \norm{\Phi^{n_\ell}}^2 \dt\right] < \infty,
\end{align*}
from which we infer that
\begin{align}\label{equ:Pg conv}
	\one_{t\leq \tau} \one_{\Omega_\ell} P_{n_\ell} g(\Phi^{n_\ell}) \rightharpoonup \one_{t\leq \tau} g(\Phi) \quad \text{in } L^2\big(\Omega; L^2(0,T; \ell^2(\mathcal{H})) \big).
\end{align}
Thus, from \eqref{equ:one Phi conv}, \eqref{equ:one PB conv}, \eqref{equ:one PR conv}, and \eqref{equ:Pg conv}, we deduce for any fixed $v\in \mathcal{H}$ the following weak convergences in $L^2\big(\Omega; L^2(0,T)\big)$:
\begin{equation}\label{equ:weak conv L2}
	\begin{aligned}
		\one_{\Omega_\ell} \int_0^{t\wedge \tau} \inpro{\mathcal{A}\Phi^{n_\ell}}{v} \ds &\rightharpoonup \int_0^{t\wedge\tau} \inpro{\mathcal{A} \Phi}{v} \ds,
		\\
		\one_{\Omega_\ell} \int_0^{t\wedge\tau} \inpro{P_{n_\ell} \mathcal{B}(\Phi^{n_\ell})}{v} \ds &\rightharpoonup \int_0^{t\wedge\tau} \inpro{\mathcal{B}(\Phi)}{v} \ds,
		\\
		\one_{\Omega_\ell} \sum_{k=1}^\infty \int_0^{t\wedge\tau} \inpro{P_{n_\ell} g_k(\Phi^{n_\ell})}{v} \mathrm{d}\beta_k &\rightharpoonup \sum_{k=1}^\infty \int_0^{t\wedge\tau} \inpro{g_k(\Phi)}{v} \mathrm{d}\beta_k.
	\end{aligned}
\end{equation}

Finally, we apply the following standard argument. Let $K\subset \Omega\times [0,T]$ be a measurable set. Then by \eqref{equ:one wedge tau conv} and \eqref{equ:weak conv L2}, we have
\begin{align*}
	\bb{E}\left[\int_0^T \one_K \inpro{\Phi(t)}{v} \dt \right]
	&=
	\lim_{\ell\to\infty} \bb{E}\left[\int_0^T \inpro{\one_{\Omega_\ell} \Phi^{n_\ell}(t\wedge\tau)}{\one_K v} \dt \right]
	\\
	&=
	\lim_{\ell\to\infty} \bb{E} \left[\int_0^T \one_K \one_{\Omega_\ell} \inpro{P_{n_\ell} \Phi_0}{v} \dt \right]
	\\
	&\quad
	- 
	\lim_{\ell\to \infty} \bb{E}\left[ \int_0^T \one_K \one_{\Omega_\ell} \left(\int_0^{t\wedge\tau} \inpro{\mathcal{A}\Phi^{n_\ell}+ P_{n_\ell} \mathcal{B}(\Phi^{n_\ell})+ P_{n_\ell} \mathcal{R}(\Phi^{n_\ell})}{v} \ds \right) \dt\right]
	\\
	&\quad
	+
	\lim_{\ell\to\infty} \bb{E} \left[\int_0^T \one_K \one_{\Omega_\ell} \left(\sum_{k=1}^\infty \int_0^{t\wedge\tau} \inpro{P_{n_\ell} g_k(\Phi^{n_\ell})}{v} \mathrm{d}\beta_k\right) \dt \right]
	\\
	&=
	\bb{E} \left[\int_0^T  \one_K \left(\inpro{\Phi_0}{v} - \int_0^{t\wedge\tau} \inpro{\mathcal{A} \Phi+\mathcal{B}(\Phi) + \mathcal{R}(\Phi)}{v} \ds \right) \dt\right]
	\\
	&\quad
	+ 
	\bb{E}\left[\int_0^T \one_K \left( \sum_{k=1}^\infty \int_0^{t\wedge\tau} \inpro{g_k(\Phi)}{v} \mathrm{d}\beta_k \right) \dt \right].
\end{align*}
Since $v$ and $K$ are arbitrary, we infer that $\Phi$ satisfies \eqref{equ:int equal} as an equation in $\mathcal{H}$. The required regularity for a strong solution is conferred by \eqref{equ:reg Phi}. This completes the proof of the theorem.
\end{proof}

The following theorem establishes the pathwise uniqueness of local strong solution obtained previously.

\begin{theorem}\label{the:unique}
	Let $\tau>0$ be a stopping time such that $(\Phi^{(1)},\tau)$ and $(\Phi^{(2)},\tau)$ are local strong solutions to \eqref{equ:equation} with the same initial data $\Phi_0$. Then
	\begin{align}\label{equ:prob phi t}
		\bb{P} \left[\Phi^{(1)}(t\wedge\tau)= \Phi^{(2)}(t\wedge\tau), \; \forall t\in [0,\infty)\right] = 1.
	\end{align}
\end{theorem}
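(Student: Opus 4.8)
The plan is the standard pathwise-uniqueness argument: It\^o's formula for the squared $\mathcal{H}$-norm of the difference $\Psi := \Phi^{(1)}-\Phi^{(2)}$, together with a localisation that tames the random (but a.s.\ time-integrable) coefficients. Since each $\Phi^{(i)}(\,\cdot\wedge\tau)$ has the strong-solution regularity \eqref{equ:reg local strong}, $\Psi(\,\cdot\wedge\tau)$ is a.s.\ continuous in $\mathcal{V}$, lies pathwise in $L^2_{\mathrm{loc}}([0,\infty);\mathrm{D}(\mathcal{A}))$, satisfies $\Psi(0)=0$, and, subtracting the two copies of \eqref{equ:int equal}, solves in $\mathcal{H}$ on $[0,\tau]$
\[
\dif\Psi + \mathcal{A}\Psi\,\dt + \big(\mathcal{B}(\Phi^{(1)})-\mathcal{B}(\Phi^{(2)})\big)\dt + \big(\mathcal{R}(\Phi^{(1)})-\mathcal{R}(\Phi^{(2)})\big)\dt = \sum_{k\ge 1}\big(g_k(\Phi^{(1)})-g_k(\Phi^{(2)})\big)\dif\beta_k .
\]
First I would fix $T>0$ and introduce the stopping times $\tau_N := \tau\wedge\inf\{t : \|\Phi^{(1)}(t)\|^2+\|\Phi^{(2)}(t)\|^2 \ge N\}\wedge\inf\{t : \int_0^t(|\mathcal{A}\Phi^{(1)}|^2+|\mathcal{A}\Phi^{(2)}|^2)\,\ds \ge N\}$, so that $\tau_N\uparrow\tau$ a.s.\ by \eqref{equ:reg local strong} and every quantity below is bounded (deterministically in $N$) on $[0,T\wedge\tau_N]$; in particular the variational It\^o formula for $|\Psi|^2$ applies there.

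Next I would apply It\^o's formula to $|\Psi|^2$ and estimate the three drift contributions and the It\^o correction. Splitting $\mathcal{B}(\Phi^{(1)})-\mathcal{B}(\Phi^{(2)}) = \mathcal{B}(\Psi,\Phi^{(1)})+\mathcal{B}(\Phi^{(2)},\Psi)$, the antisymmetry \eqref{equ:B antisym} makes $\inpro{\mathcal{B}(\Phi^{(2)},\Psi)}{\Psi}$ vanish, while \eqref{equ:est B with A} applied with the regular factor $\Phi^{(1)}\in\mathrm{D}(\mathcal{A})$ in the middle slot, followed by Young's inequality, gives $|\inpro{\mathcal{B}(\Psi,\Phi^{(1)})}{\Psi}|\leqs \tfrac12\|\Psi\|^2 + C\|\Phi^{(1)}\|\,|\mathcal{A}\Phi^{(1)}|\,|\Psi|^2$. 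The reaction difference is handled by \eqref{equ:est Rv1 minus v2} and Young's inequality, giving $|\inpro{\mathcal{R}(\Phi^{(1)})-\mathcal{R}(\Phi^{(2)})}{\Psi}|\leqs \tfrac12\|\Psi\|^2 + C(1+\|\Phi^{(1)}\|^2+\|\Phi^{(2)}\|^2)^2|\Psi|^2$, and the It\^o correction $\sum_k|g_k(\Phi^{(1)})-g_k(\Phi^{(2)})|^2$ is bounded by $L_{\mathcal{H}}^2|\Psi|^2$ via the $\mathcal{H}$-Lipschitz part of Condition~(G), \eqref{equ:cond G}. Absorbing the $\tfrac12\|\Psi\|^2$-terms into the $2\|\Psi\|^2$ on the left yields, on $[0,T\wedge\tau_N]$,
\[
\dif|\Psi|^2 + \|\Psi\|^2\,\dt \;\le\; \zeta(t)\,|\Psi|^2\,\dt + \dif M_t ,\qquad \zeta := C\big(1+\|\Phi^{(1)}\|^4+\|\Phi^{(2)}\|^4+\|\Phi^{(1)}\|\,|\mathcal{A}\Phi^{(1)}|\big),
\]
where $M$ is a genuine martingale on this interval and $\int_0^{T\wedge\tau_N}\zeta(s)\,\ds$ is bounded by a deterministic constant.

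To close the argument I would multiply by the bounded-variation weight $\eta(t):=\exp\big(-\int_0^t\zeta(s)\,\ds\big)\in(0,1]$ and apply It\^o's product rule: since $\eta$ has no martingale part, $\dif(\eta|\Psi|^2)\le -2\eta\|\Psi\|^2\,\dt+\eta\,\dif M\le \eta\,\dif M$, so taking expectations over $[0,T\wedge\tau_N]$ gives $\bb{E}[\eta(T\wedge\tau_N)|\Psi(T\wedge\tau_N)|^2]\le |\Psi(0)|^2=0$; as $\eta>0$ this forces $\Psi(t\wedge\tau_N)=0$ a.s.\ for all $t\le T$. Letting $N\to\infty$ (so $\tau_N\uparrow\tau$) and then $T\to\infty$, and invoking the a.s.\ continuity of $\Psi(\,\cdot\wedge\tau)$ in $\mathcal{V}\hookrightarrow\mathcal{H}$, upgrades this to \eqref{equ:prob phi t}. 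Equivalently, one may feed the displayed differential inequality directly into the stochastic Gronwall lemma (Lemma~\ref{lem:gronwall}) with $u=|\Psi|^2$, $v=\|\Psi\|^2$, $r=\zeta$, and vanishing forcing. The only genuine obstacle is that $\zeta$ is truly random and not pointwise bounded — only $\int_0^\tau\zeta\,\ds<\infty$ a.s. — so a naive Gronwall estimate after taking expectations does not apply; this is precisely what the localisation $\tau_N$ combined with the exponential weight (or the stochastic Gronwall lemma) is designed to circumvent, while the cancellation \eqref{equ:B antisym} is what makes the estimate close at the level of the $\mathcal{H}$-norm in the first place.
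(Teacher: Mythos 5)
Your proof is correct and follows the same overall strategy as the paper's: It\^o's formula for $\abs{\Psi}^2$ with $\Psi=\Phi^{(1)}-\Phi^{(2)}$, the cancellation of $\inpro{\mathcal{B}(\Phi^{(2)},\Psi)}{\Psi}$ via \eqref{equ:B antisym}, the estimate \eqref{equ:est Rv1 minus v2} for the reaction difference, the $\mathcal{H}$-Lipschitz bound from Condition (G) for the It\^o correction, a localisation in $R$ (your $N$), and a Gronwall step followed by rational times plus continuity. The differences are technical but worth recording. For the surviving convective term $\inpro{\mathcal{B}(\Psi,\Phi^{(1)})}{\Psi}$ the paper invokes \eqref{equ:est B 3d alt} rather than \eqref{equ:est B with A}; after Young's inequality this yields the coefficient $\norm{\Phi^{(1)}}^4$ instead of your $\norm{\Phi^{(1)}}\abs{\mathcal{A}\Phi^{(1)}}$. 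The former is \emph{pointwise} bounded under the lighter localisation $\sigma_R=\inf\{t:\norm{\Phi^{(1)}(t)}^2+\norm{\Phi^{(2)}(t)}^2>R\}\wedge\tau$, so the paper can take expectations and apply the plain deterministic Gronwall lemma, with no need to also truncate $\int\abs{\mathcal{A}\Phi^{(i)}}^2\,\ds$ or to bring in the exponential weight / stochastic Gronwall machinery that your merely time-integrable coefficient forces; your route is valid but heavier, and only needs the $\mathrm{D}(\mathcal{A})$-regularity of $\Phi^{(1)}$ where the paper's does not. To remove the cutoff, the paper bounds $\bb{P}[\sigma_R<\tau]\leq R^{-1}\,\bb{E}\big[\sup_{s\in[0,\tau]}(\norm{\Phi^{(1)}(s)}^2+\norm{\Phi^{(2)}(s)}^2)\big]\to 0$ by Chebyshev and the moment bound in \eqref{equ:reg local strong}, whereas you use only the a.s.\ convergence $\tau_N\uparrow\tau$, which is marginally more economical. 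One cosmetic slip in your display: It\^o's formula carries a factor $2$ in front of each drift pairing, so your two Young splittings with weight $\tfrac12\norm{\Psi}^2$ absorb the full $2\norm{\Psi}^2$ on the left and the $\norm{\Psi}^2\,\dt$ term you retain actually disappears; this is harmless (take weight $\tfrac14$, or note the term is never used).
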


\begin{proof}
Let $\Psi:= \Phi^{(1)}-\Phi^{(2)}$. Then $\Psi$ satisfies
\begin{align*}
	\mathrm{d}\Psi +\left(\mathcal{A} \Psi + \mathcal{B}(\Phi^{(1)})- \mathcal{B}(\Phi^{(2)}) + \mathcal{R}(\Phi^{(1)}) - \mathcal{R}(\Phi^{(2)}) \right) \dt = \sum_{k=1}^\infty \left(g_k(\Phi^{(1)})- g_k(\Phi^{(2)})\right) \mathrm{d}\beta_k.
\end{align*}
Applying It\^o's lemma to $\abs{\Psi}^2$ yields
\begin{align*}
	\mathrm{d}\abs{\Psi}^2
	&=
	-2 \norm{\Psi}^2 \dt
	-2 \inpro{\mathcal{B}(\Phi^{(1)})- \mathcal{B}(\Phi^{(2)})}{\Psi} \dt
	-2 \inpro{\mathcal{R}(\Phi^{(1)}) - \mathcal{R}(\Phi^{(2)})}{\Psi} \dt 
	\\
	&\quad
	+
	\sum_{k=1}^\infty \abs{g_k(\Phi^{(1)})- g_k(\Phi^{(2)})}^2 \dt 
	+
	2\sum_{k=1}^\infty \inpro{g_k(\Phi^{(1)})-g_k(\Phi^{(2)})}{\Psi} \mathrm{d}\beta_k.
\end{align*}
For any stopping time $\sigma\leq\tau$, we integrate the above expression over $[0,\sigma]$ and apply the expected value to obtain
\begin{align}\label{equ:e Psi sigma}
	&\bb{E} \left[\abs{\Psi(\sigma)}^2\right] + 2\bb{E} \left[\int_0^\sigma \norm{\Psi(t)}^2 \dt\right] 
	\nonumber\\
	&=
	-2 \bb{E}\left[\int_0^\sigma \inpro{\mathcal{B}(\Phi^{(1)})- \mathcal{B}(\Phi^{(2)})}{\Psi} \dt \right] 
	-
	2 \bb{E}\left[\int_0^\sigma \inpro{\mathcal{R}(\Phi^{(1)})- \mathcal{R}(\Phi^{(2)})}{\Psi} \dt \right] 
	\nonumber\\
	&\quad
	+
	\bb{E} \left[ \int_0^\sigma \abs{g_k(\Phi^{(1)})- g_k(\Phi^{(2)})}^2 \dt \right],
\end{align}
where we also used the martingale property of stochastic integral. Note that we have the following estimates. Firstly, by \eqref{equ:B antisym} and \eqref{equ:est B 3d alt},
\begin{align}\label{equ:B1 min B2}
	\abs{\inpro{\mathcal{B}(\Phi^{(1)})- \mathcal{B}(\Phi^{(2)})}{\Psi}}
	=
	\abs{\inpro{\mathcal{B}(\Psi,\Phi^{(1)})}{\Psi}}
	&\leq
	C\norm{\Psi}^{\frac32} \norm{\Phi^{(1)}} \abs{\Psi}^{\frac12}
	\nonumber\\
	&\leq
	\frac14 \norm{\Psi}^2 + C \norm{\Phi^{(1)}}^4 \abs{\Psi}^2,
\end{align}
where in the last step we used Young's inequality. Furthermore, by \eqref{equ:est Rv1 minus v2}, we have
\begin{align}\label{equ:R1 min R2}
	\abs{\inpro{\mathcal{R}(\Phi^{(1)})- \mathcal{R}(\Phi^{(2)})}{\Psi}}
	&=
	C\left(1+\norm{\Phi^{(1)}}^2 + \norm{\Phi^{(2)}}^2 \right) \norm{\Psi} \abs{\Psi}
	\nonumber\\
	&\leq
	\frac14 \norm{\Psi}^2 + C\left(1+\norm{\Phi^{(1)}}^4 + \norm{\Phi^{(2)}}^4 \right) \abs{\Psi}^2.
\end{align}
Now, for any $R>0$, define the stopping time
\begin{align*}
	\sigma_R:= \inf\left\{t>0: \norm{\Phi^{(1)}(t)}^2 + \norm{\Phi^{(2)}(t)}^2 >R\right\} \wedge \tau.
\end{align*}
Noting this stopping time and Lipschitz assumptions on $g$, we employ estimates \eqref{equ:B1 min B2} and \eqref{equ:R1 min R2} in \eqref{equ:e Psi sigma} and rearrange the terms to obtain
\begin{align*}
	\bb{E} \left[\abs{\Psi(\sigma_R \wedge t)}^2\right] + \bb{E} \left[\int_0^{\sigma_R\wedge t} \norm{\Psi(s)}^2 \ds \right] 
	&\leq
	C\bb{E} \left[\int_0^{\sigma_R\wedge t} \left(1+\norm{\Phi^{(1)}}^4 + \norm{\Phi^{(2)}}^4 \right) \abs{\Psi (s)}^2 \ds \right]
	\\
	&\leq
	C(1+R^2) \,\bb{E} \left[\int_0^{\sigma_R\wedge t} \abs{\Psi (s)}^2 \ds \right].
\end{align*}
By the Gronwall lemma, we infer that
\begin{align*}
	\bb{E}\left[\abs{\Psi(\sigma_R \wedge t)}^2\right]=0,
\end{align*}
which implies $\Phi^{(1)}(\sigma_R\wedge t)= \Phi^{(2)}(\sigma_R\wedge t)$ a.s. Thus,
\begin{align*}
	\bb{P} \left[\Phi^{(1)}(t\wedge\tau) \neq \Phi^{(2)}(t\wedge\tau)\right]
	&\leq
	\bb{P} \left[ \{\sigma_R <\tau\} \cap \{\Phi^{(1)}(t\wedge\tau) \neq \Phi^{(2)}(t\wedge\tau)\}\right]
	\\
	&\leq
	\bb{P} \left[ \sigma_R<\tau \right] 
	\\
	&\leq
	\bb{P} \left[\sup_{s\in [0,\tau]} \left(\norm{\Phi^{(1)}(s)}^2 + \norm{\Phi^{(2)}(s)}^2\right) >R \right]
	\\
	&\leq
	\frac{1}{R} \bb{E}\left[ \sup_{s\in [0,\tau]} \left(\norm{\Phi^{(1)}(s)}^2 + \norm{\Phi^{(2)}(s)}^2\right) \right],
\end{align*}
which tends to $0$ as $R\to \infty$. Therefore for any $t\geq 0$, we have $\Phi^{(1)}(t\wedge\tau)= \Phi^{(2)}(t\wedge\tau)$ on a set of full measure which may depend on $t$. By taking the intersection of such sets corresponding to positive rational times, we obtain
\begin{align*}
	\bb{P} \left[\Phi^{(1)}(t\wedge\tau)= \Phi^{(2)}(t\wedge\tau), \; \forall t\in [0,\infty) \cap \bb{Q} \right] = 1.
\end{align*}
By the continuity in time property \eqref{equ:reg local strong} of the solution, we deduce \eqref{equ:prob phi t}, completing the proof of the theorem.
\end{proof}

\section{Existence of a maximal strong solution}\label{sec:max sol}

In this section, we establish the existence of a maximal strong solution $(\Phi, \xi)$ to \eqref{equ:equation}. Furthermore, we show that this maximal strong solution is global if $d=2$. We begin with the following estimate on a local weak solution of \eqref{equ:equation} which will be used on several occasions.

\begin{lemma}\label{lem:E abs Phi p}
	Suppose that $(\Phi,\tau)$ is a local weak solution of \eqref{equ:equation} in the sense of Definition~\ref{def:weak sol} corresponding to an initial data $\Phi_0\in \mathcal{H}$. Then for any $T>0$ and $p\geq 4$,
	\begin{align*}
		\bb{E}\left[\sup_{t\in [0,\tau\wedge T]} \abs{\Phi(t)}^p \right]
		+
		\bb{E} \left[ \int_0^{\tau \wedge T} \norm{\Phi (s)}^2 \abs{\Phi (s)}^{p-2} \ds \right] 
		\leq
		C_p \left(1+\abs{\Phi_0}^p \right),
	\end{align*}
	where $C_p$ is a constant which depends on $p$ and $T$, but is independent of $\tau$.
\end{lemma}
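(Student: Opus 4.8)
The plan is to establish the bound by applying It\^o's formula successively to $\abs{\Phi}^2$ and to $\abs{\Phi}^p=\bigl(\abs{\Phi}^2\bigr)^{p/2}$, followed by a localisation--Gronwall argument and a Burkholder--Davis--Gundy estimate to move the supremum inside the expectation. First I would apply the variational It\^o formula along $\mathcal{V}\subset\mathcal{H}\subset\mathcal{V}'$ to $\abs{\Phi}^2$. Using the antisymmetry $\inpro{\mathcal{B}(\Phi)}{\Phi}=0$ from \eqref{equ:B antisym}, the sign condition $\inpro{\mathcal{S}(\Phi)}{\Phi}\ge0$ together with $\abs{\mathcal{F}(\Phi)}\le C(1+\norm{\Phi})$ from \eqref{equ:Rv v pos}, and $\sum_k\abs{g_k(\Phi)}^2\le C(1+\abs{\Phi})^2$ from \eqref{equ:cond G}, a Young's inequality absorbing $2\abs{\mathcal{F}(\Phi)}\abs{\Phi}$ into $\norm{\Phi}^2$ gives
\[
\dif\abs{\Phi}^2 + \norm{\Phi}^2\,\dt \le C\bigl(1+\abs{\Phi}^2\bigr)\,\dt + 2\sum_{k=1}^\infty\inpro{g_k(\Phi)}{\Phi}\,\dif\beta_k .
\]
Then applying It\^o's formula with $f(x)=x^{p/2}$ — here $p\ge4$ ensures $f''$ is locally bounded near $0$, so no regularisation is needed — and controlling the quadratic variation by $\sum_k\inpro{g_k(\Phi)}{\Phi}^2\le\abs{\Phi}^2\sum_k\abs{g_k(\Phi)}^2\le C\abs{\Phi}^2(1+\abs{\Phi}^2)$, I obtain, after absorbing lower-order powers via $\abs{\Phi}^{p-2}\le 1+\abs{\Phi}^p$,
\[
\dif\abs{\Phi}^p + \tfrac p2\norm{\Phi}^2\abs{\Phi}^{p-2}\,\dt \le C\bigl(1+\abs{\Phi}^p\bigr)\,\dt + p\,\abs{\Phi}^{p-2}\sum_{k=1}^\infty\inpro{g_k(\Phi)}{\Phi}\,\dif\beta_k .
\]

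Next I would localise by $\tau_N:=\inf\{t\ge0:\abs{\Phi(t)}^2\ge N\}\wedge\tau$; since $t\mapsto\abs{\Phi(t)}^2$ is lower semicontinuous and $\Phi(\cdot\wedge\tau)\in C_{\mathrm{w}}([0,\infty);\mathcal{H})$ is bounded on compact time intervals, $\tau_N\wedge T\uparrow\tau\wedge T$ a.s. On $[0,\tau_N\wedge T]$ the integrand of the stochastic term is bounded, so it is a true martingale; integrating over $[0,t\wedge\tau_N\wedge T]$, taking expectations, and using $\bb{E}\int_0^{t\wedge\tau_N\wedge T}(1+\abs{\Phi}^p)\,\ds\le\int_0^t\bigl(1+\bb{E}\abs{\Phi(s\wedge\tau_N\wedge T)}^p\bigr)\,\ds$, Gronwall's lemma yields $\bb{E}[\abs{\Phi(t\wedge\tau_N\wedge T)}^p]\le C_T(1+\abs{\Phi_0}^p)$, hence also $\bb{E}\int_0^{\tau_N\wedge T}\norm{\Phi}^2\abs{\Phi}^{p-2}\,\ds\le C_T(1+\abs{\Phi_0}^p)$, uniformly in $N$. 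To bring the supremum inside, I return to the integrated inequality, take $\sup_{t\in[0,\tau_N\wedge T]}$ then $\bb{E}$, and bound the martingale term by Burkholder--Davis--Gundy: its bracket is $\le C\int_0^{\tau_N\wedge T}(1+\abs{\Phi}^{2p})\,\ds$, and writing $\abs{\Phi}^{2p}\le\bigl(\sup_{s\le\tau_N\wedge T}\abs{\Phi(s)}^p\bigr)\abs{\Phi(s)}^p$ and applying Young's inequality produces $\tfrac12\bb{E}\sup_{[0,\tau_N\wedge T]}\abs{\Phi}^p+C\bb{E}\int_0^{\tau_N\wedge T}(1+\abs{\Phi}^p)\,\ds$; the first summand is finite by the localisation and is absorbed into the left-hand side, the second is already controlled. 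This gives $\bb{E}\sup_{t\in[0,\tau_N\wedge T]}\abs{\Phi(t)}^p\le C_T(1+\abs{\Phi_0}^p)$, and letting $N\to\infty$ via monotone convergence (using $\tau_N\wedge T\uparrow\tau\wedge T$) completes the proof.

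The main obstacle is this absorption step: it is legitimate only because the localisation renders $\bb{E}\sup_{[0,\tau_N\wedge T]}\abs{\Phi}^p$ finite \emph{a priori}, so one must carry $\tau_N$ through the whole computation and pass to the limit only at the very end. A secondary point is the rigorous justification of the first It\^o formula when $\mathcal{B}(\Phi)$ need not lie in $L^2(0,T;\mathcal{V}')$, which is handled by the standard variational It\^o formula exploiting $\inpro{\mathcal{B}(\Phi)}{\Phi}=0$ and the continuity of $\mathcal{B}$, exactly as in the Navier--Stokes setting.
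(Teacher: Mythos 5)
Your proof is correct, and the core computation coincides with the paper's: the same It\^o expansion of $\abs{\Phi}^p$ (your two-step route via $\abs{\Phi}^2$ and $f(x)=x^{p/2}$ produces exactly the paper's identity, with the same use of $\inpro{\mathcal{B}(\Phi)}{\Phi}=0$, $\inpro{\mathcal{S}(\Phi)}{\Phi}\ge 0$, the linear growth of $\mathcal{F}$ and $g$, and the same Burkholder--Davis--Gundy/Young absorption of the martingale term). Where you genuinely diverge is the Gronwall mechanism. The paper runs the estimate between arbitrary pairs of stopping times $0\le\sigma_a\le\sigma_b\le\tau\wedge T$ and closes the argument with the stochastic Gronwall lemma (Lemma~\ref{lem:gronwall}), taking $u=\abs{\Phi}^p$, $v=\norm{\Phi}^2\abs{\Phi}^{p-2}$ and constant $r,w$; you instead localise by $\tau_N=\inf\{t:\abs{\Phi(t)}^2\ge N\}\wedge\tau$, apply the classical Gronwall inequality to $\bb{E}\bigl[\abs{\Phi(t\wedge\tau_N\wedge T)}^p\bigr]$, and pass to the limit by monotone convergence. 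The paper's route is shorter once Lemma~\ref{lem:gronwall} is in hand, but that lemma presupposes $\bb{E}\bigl[\int_0^{\tau\wedge T}(r\,u+w)\,\dt\bigr]<\infty$, i.e.\ $\bb{E}\bigl[\int_0^{\tau\wedge T}\abs{\Phi}^p\,\dt\bigr]<\infty$, which Definition~\ref{def:weak sol} only supplies for $p=2$; verifying it for $p\ge 4$ requires precisely the kind of localisation you carry out, so your version makes explicit an a priori finiteness step that the paper's application leaves implicit. Your added care about the weak lower semicontinuity of $t\mapsto\abs{\Phi(t)}$ (so that $\tau_N$ is a stopping time and the stopped process is bounded) and about the variational It\^o formula when $\mathcal{B}(\Phi)$ is only in $L^1(0,T;\mathcal{V}')$ are both appropriate and consistent with the paper's (unstated) conventions.
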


\begin{proof}
For any stopping times $\sigma_a,\sigma_b$ such that $0\leq \sigma_a \leq t\leq \sigma_b \leq \tau\wedge T$, by applying It\^o's lemma to $\abs{\Phi}^p$, where $p\geq 4$, we have 
\begin{align*}
	&\abs{\Phi(t)}^p 
	+
	p \int_{\sigma_a}^{t} \norm{\Phi(s)}^2 \abs{\Phi(s)}^{p-2} \ds
	+
	p\int_{\sigma_a}^{t} \abs{\Phi(s)}^{p-2} \inpro{\mathcal{R}(\Phi(s))}{\Phi(s)} \ds
	\\
	&=
	\abs{\Phi(\sigma_a)}^p
	+
	\frac{p}{2} \int_{\sigma_a}^{t} \sum_{k=1}^\infty \abs{\Phi(s)}^{p-2} \abs{g_k(\Phi(s))}^2 \ds
	+
	\frac{p(p-2)}{2} \int_{\sigma_a}^t \sum_{k=1}^\infty \abs{\inpro{g_k(\Phi(s))}{\Phi(s)}}^2 \abs{\Phi(s)}^{p-4} \ds
	\\
	&\quad
	+
	p \int_{\sigma_a}^{t} \sum_{k=1}^\infty \abs{\Phi(s)}^{p-2} \inpro{g_k(\Phi(s))}{\Phi(s)} \mathrm{d}\beta_k.
\end{align*}
Noting assumption \eqref{equ:Rv v pos} on $\mathcal{R}$, we then have
\begin{align*}
	&\abs{\Phi(t)}^p 
	+
	p \int_{\sigma_a}^{t} \norm{\Phi(s)}^2 \abs{\Phi(s)}^{p-2} \ds
	\\
	&\leq
	\abs{\Phi(\sigma_a)}^p
	+
	p \int_{\sigma_a}^{t} \abs{\Phi(s)}^{p-2} \abs{\inpro{\mathcal{F}(\Phi(s))}{\Phi(s)}} \ds
	+
	\frac{p(p-1)}{2} \int_{\sigma_a}^{t} \sum_{k=1}^\infty \abs{\Phi(s)}^{p-2} \abs{g_k(\Phi(s))}^2 \ds
	\\
	&\quad
	+
	p \int_{\sigma_a}^{t} \sum_{k=1}^\infty \abs{\Phi(s)}^{p-2} \inpro{g_k(\Phi(s))}{\Phi(s)} \mathrm{d}\beta_k.
\end{align*}
Using the triangle inequality, taking supremum over $t\in [\sigma_a,\sigma_b]$, and applying the expected value, we obtain
\begin{align}\label{equ:E sup I1 to I3}
	&\bb{E} \left[\sup_{t\in [\sigma_a,\sigma_b]} \abs{\Phi(t)}^p\right] 
	+
	p \bb{E} \left[\int_{\sigma_a}^{\sigma_b} \norm{\Phi(s)}^2 \abs{\Phi(s)}^{p-2} \ds \right]
	\nonumber\\
	&\leq
	C_p \bb{E}\left[\abs{\Phi(\sigma_a)}^p \right] 
	+
	C_p \bb{E} \left[ \int_{\sigma_a}^{\sigma_b} \abs{\Phi(s)}^{p-2} \abs{\inpro{\mathcal{F}(\Phi(s))}{\Phi(s)}} \ds \right]
	\nonumber\\
	&\quad
	+
	C_p \bb{E}\left[  \int_{\sigma_a}^{\sigma_b} \sum_{k=1}^\infty \abs{\Phi(s)}^{p-2} \abs{g_k(\Phi(s))}^2 \ds \right] 
	\nonumber\\
	&\quad
	+
	C_p \bb{E}\left[ \sup_{t\in [\sigma_a,\sigma_b]} \abs{\int_{\sigma_a}^t \sum_{k=1}^\infty  \abs{\Phi(s)}^{p-2} \inpro{g_k(\Phi(s))}{\Phi(s)} \mathrm{d}\beta_k} \right]
	\nonumber\\
	&=: C_p \bb{E}\left[\abs{\Phi(\sigma_a)}^p \right]  + I_1+I_2+I_3.
\end{align}
We will estimate each term on the last line. Firstly, note that by \eqref{equ:Rv v pos}, we have
\begin{align}\label{equ:abs Phi p2}
	\abs{\Phi(s)}^{p-2} \abs{\inpro{\mathcal{F}(\Phi(s))}{\Phi(s)}}
	&\leq
	C \abs{\Phi(s)}^{p-2} \left(1+\norm{\Phi(s)}\right) \abs{\Phi(s)}
	\nonumber\\
	&\leq
	C \abs{\Phi(s)}^{p-1} + C \abs{\Phi(s)}^{\frac{p-2}{2}}  \norm{\Phi(s)} \abs{\Phi(s)}^{\frac{p}{2}}
	\nonumber\\
	&\leq
	C + C\abs{\Phi(s)}^p + \frac{p}{4} \norm{\Phi(s)}^2 \abs{\Phi(s)}^{p-2},
\end{align}
where in the last step we applied Young's inequality. Next, by the assumptions on $g$ and Young's inequality, we obtain
\begin{align}\label{equ:int sum Phi p2}
	\int_{\sigma_a}^{\sigma_b} \sum_{k=1}^\infty   \abs{\Phi(s)}^{p-2} \abs{g_k(\Phi(s))}^2 \ds
	&\leq
	C \int_{\sigma_a}^{\sigma_b} \abs{\Phi(s)}^{p-2} \left(1+\abs{\Phi(s)}^2\right) \ds
	\nonumber\\
	&\leq
	C\left(1+\int_{\sigma_a}^{\sigma_b} \abs{\Phi(s)}^p \ds \right).
\end{align}
Similarly, by the Burkholder--Davis--Gundy inequality, we also have
\begin{align}\label{equ:sup sum t}
	&\bb{E}\left[ \sup_{t\in [\sigma_a,\sigma_b]} \abs{\sum_{k=1}^\infty \int_{\sigma_a}^t \abs{\Phi(s)}^{p-2} \inpro{g_k(\Phi(s))}{\Phi(s)} \mathrm{d}\beta_k} \right]
	\nonumber\\
	&\leq
	C_p \bb{E} \left[\int_{\sigma_a}^{\sigma_b} \sum_{k=1}^\infty \abs{\inpro{g_k(\Phi(s))}{\Phi(s)}}^2 \abs{\Phi(s)}^{2(p-2)} \ds \right]^{\frac12}
	\nonumber\\
	&\leq
	C_p \bb{E} \left[\int_{\sigma_a}^{\sigma_b} \left(1+\abs{\Phi(s)}^2 \right) \abs{\Phi(s)}^{2(p-1)} \ds \right]^{\frac12}
	\nonumber\\
	&\leq
	\frac14 \bb{E} \left[\sup_{t\in [\sigma_a,\sigma_b]} \abs{\Phi(t)}^p\right]
	+
	C \bb{E} \left[1+\int_{\sigma_a}^{\sigma_b} \abs{\Phi(s)}^p \ds \right],
\end{align}
where in the last step we again used Young's inequality. Altogether, applying \eqref{equ:abs Phi p2}, \eqref{equ:int sum Phi p2}, and \eqref{equ:sup sum t} in \eqref{equ:E sup I1 to I3}, and rearranging the terms, we infer
\begin{align*}
	&\bb{E} \left[\sup_{t\in [\sigma_a,\sigma_b]} \abs{\Phi(t)}^p\right] 
	+
	p \bb{E} \left[\int_{\sigma_a}^{\sigma_b} \norm{\Phi(s)}^2 \abs{\Phi(s)}^{p-2} \ds \right]
	\le
	\bb{E}\left[1+\abs{\Phi(\sigma_a)}^p \right] 
	+
	C_p \bb{E} \left[\int_{\sigma_a}^{\sigma_b} \abs{\Phi(s)}^p \ds \right].
\end{align*}
The required inequality then follows from Lemma~\ref{lem:gronwall}.
\end{proof}

\subsection{Maximal strong solution for $d\leq 3$}

The existence of a maximal strong pathwise solution in the sense of Definition~\ref{def:global strong sol} will be shown in this section. The argument follows closely the idea in~\cite[Section~5.3.4]{BreFeiHof18}

\begin{theorem}\label{the:max strong sol}
	There exists a unique maximal strong pathwise solution $(\Phi,\xi)$ to \eqref{equ:equation} and a sequence of stopping times $\{\tau_n\}_{n\in\bb{N}}$ announcing $\xi$. 
\end{theorem}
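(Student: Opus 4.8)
The plan is to construct the maximal solution by exhausting the time axis with the local strong solutions of Theorem~\ref{the:strong sol}, patched together consistently via the pathwise uniqueness of Theorem~\ref{the:unique}, following the scheme of~\cite[Section~5.3.4]{BreFeiHof18}. Starting from $\Phi_0\in\mathcal{V}$, Theorem~\ref{the:strong sol} gives a local strong solution $(\Phi,\sigma_1)$ with $\sigma_1>0$; since $\Phi(\,\cdot\wedge\sigma_1)\in C([0,\infty);\mathcal{V})$, the endpoint $\Phi(\sigma_1)$ is an $\mathcal{F}_{\sigma_1}$-measurable $\mathcal{V}$-valued random variable, so a time-shifted, random-initial-data version of Theorem~\ref{the:strong sol} (combined with Theorem~\ref{the:unique} to identify the old and new pieces on their overlap) produces an extension to a strictly larger stopping time. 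Iterating this and passing to the limit yields a process $\Phi$ defined on a random interval $[0,\xi)$ such that $(\Phi,\tau)$ is a local strong solution for every stopping time $\tau<\xi$. As announcing sequence I take $\tau_n:=\big(\inf\{t\ge 0:\norm{\Phi(t)}^2+\int_0^t\abs{\mathcal{A}\Phi(s)}^2\,\ds\ge n\}\big)\wedge n$, which is strictly increasing with $\tau_n\uparrow\xi$ a.s., and each $(\Phi,\tau_n)$ is a local strong solution.

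Next I would establish the blow-up dichotomy of Definition~\ref{def:global strong sol}: on $\{\xi<\infty\}$ one has $\sup_{t<\xi}\norm{\Phi(t)}^2+\int_0^\xi\abs{\mathcal{A}\Phi(s)}^2\,\ds=\infty$. On the complementary event where this quantity is finite, the integral equation together with a reprise of the a~priori estimates would force $\Phi(t)$ to converge in $\mathcal{V}$ as $t\uparrow\xi$; restarting from that limit would then extend $\Phi$ strictly beyond $\xi$, contradicting the maximality of the construction. Hence $\xi$ is a genuine blow-up time and, by the definition of $\tau_n$, one may arrange $\sup_{t\le\tau_n}\norm{\Phi(t)}^2+\int_0^{\tau_n}\abs{\mathcal{A}\Phi}^2\,\ds=n$ on $\{\xi<\infty\}$ as in Remark~\ref{rem1}, so $(\Phi,\xi)$ is a maximal strong pathwise solution.

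For uniqueness, let $(\Phi^{(1)},\xi^{(1)})$ and $(\Phi^{(2)},\xi^{(2)})$ be maximal strong pathwise solutions with announcing sequences $\{\tau_n^{(1)}\}$ and $\{\tau_m^{(2)}\}$. For fixed $n,m$, the stopping time $\tau_n^{(1)}\wedge\tau_m^{(2)}$ makes both processes local strong solutions with the same data, so Theorem~\ref{the:unique} gives $\Phi^{(1)}=\Phi^{(2)}$ on $[0,\tau_n^{(1)}\wedge\tau_m^{(2)}]$; letting $n,m\to\infty$ gives $\Phi^{(1)}=\Phi^{(2)}$ on $[0,\xi^{(1)}\wedge\xi^{(2)})$. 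If $\bb{P}[\xi^{(1)}<\xi^{(2)}]>0$, then on that event $\Phi^{(2)}$ — which agrees with $\Phi^{(1)}$ there — stays bounded in $\mathcal{V}$ up to $\xi^{(1)}$, contradicting the blow-up of $\Phi^{(1)}$; by symmetry $\xi^{(1)}=\xi^{(2)}$ a.s., so the two maximal solutions coincide.

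Finally I would verify that $(\Phi,\xi)$ is a local weak pathwise solution in the sense of Definition~\ref{def:weak sol}. The time $\xi$ is a strictly positive stopping time (Remark~\ref{rem1}). Applying the energy estimate for $\abs{\Phi}^2$ (It\^o's formula, using \eqref{equ:B antisym} and \eqref{equ:Rv v pos}) and Lemma~\ref{lem:E abs Phi p} to each $(\Phi,\tau_n)$ yields, uniformly in $n$ and for every $T>0$,
\[
\bb{E}\left[\sup_{t\in[0,\tau_n\wedge T]}\abs{\Phi(t)}^2+\int_0^{\tau_n\wedge T}\norm{\Phi(s)}^2\,\ds\right]\le C_{T}\big(1+\abs{\Phi_0}^2\big),
\]
so by monotone convergence the same bound holds with $\tau_n$ replaced by $\xi$. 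Hence $\int_0^{\xi\wedge T}\norm{\Phi}^2\,\ds<\infty$ and $\sup_{t<\xi\wedge T}\abs{\Phi(t)}<\infty$ a.s., which, via the assumptions on $\mathcal{B}$, $\mathcal{R}$ and $g$, control $\mathcal{A}\Phi+\mathcal{B}(\Phi)+\mathcal{R}(\Phi)$ in $L^1(0,\xi\wedge T;\mathcal{V}')$ and $g(\Phi)$ in $L^2(0,\xi\wedge T;\ell^2(\mathcal{H}))$ a.s. Passing to the limit $n\to\infty$ in \eqref{equ:int equal} written for $(\Phi,\tau_n)$ — the drift integrals converging in $\mathcal{V}'$, the stochastic integral converging by continuity of the It\^o integral in its upper limit, and $\Phi(\,\cdot\wedge\tau_n)$ converging to a weakly continuous $\mathcal{H}$-valued limit — gives \eqref{equ:int equal} at $t\wedge\xi$ in $\mathcal{V}'$ together with $\Phi(\,\cdot\wedge\xi)\in L^2(\Omega;C_{\mathrm{w}}([0,\infty);\mathcal{H}))$ and $\Phi\,\one_{t\le\xi}\in L^2(\Omega;L^2_{\mathrm{loc}}([0,\infty);\mathcal{V}))$. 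I expect the main obstacle to be the first two steps — formulating and using a restart/time-shift version of the local existence theorem from a random $\mathcal{V}$-valued initial state, and rigorously extracting the blow-up alternative from maximality; the weak-solution verification is then a comparatively routine limiting argument fed by Lemma~\ref{lem:E abs Phi p}.
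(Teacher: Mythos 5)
Your overall architecture is the same as the paper's (which also follows \cite[Section~5.3.4]{BreFeiHof18}): local existence from Theorem~\ref{the:strong sol}, patching via the pathwise uniqueness of Theorem~\ref{the:unique}, a restart on the shifted stochastic basis $(\Omega,\mathcal{F},\{\mathcal{F}_{\tau_n+t}\},\bb{P})$ with $\hat\beta_k(t)=\beta_k(\tau_n+t)-\beta_k(\tau_n)$, hitting-time announcing sequences as in Remark~\ref{rem1}, and Lemma~\ref{lem:E abs Phi p} plus monotone convergence to define $\Phi$ past $\xi$ and verify the local weak-solution property. The one place where you genuinely diverge is the derivation of the blow-up alternative, and it is exactly the spot you flag as the main obstacle. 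You propose to argue that on the non-blow-up event $\Phi(t)$ converges in $\mathcal{V}$ as $t\uparrow\xi$ and then restart from that limit; this requires establishing $\mathcal{V}$-continuity up to the closed endpoint of a merely countably-constructed $\xi$, and also requires that your iteratively defined $\xi$ is genuinely maximal (a countable iteration need not exhaust all admissible extensions unless set up carefully). The paper sidesteps both issues: it takes $\xi=\sup\mathcal{L}$ over \emph{all} stopping times admitting a local strong solution, so that any strict extension contradicts maximality by definition, and it restarts from $\Phi(\tau_n)$ — which is well defined by the $C([0,\infty);\mathcal{V})$ regularity of the local solution — to conclude $\bb{P}[\tau_n\wedge T=\xi]=0$; together with the normalisation of Remark~\ref{rem1} this yields the blow-up statement without ever needing a limit of $\Phi(t)$ at $\xi$. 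If you keep your route, you must actually prove the $\mathcal{V}$-convergence at $\xi^-$ (it is not immediate from $\sup_{t<\xi}\norm{\Phi(t)}^2+\int_0^\xi\abs{\mathcal{A}\Phi}^2\,\ds<\infty$); switching to the paper's formulation removes the need. On the positive side, you supply an explicit uniqueness argument for the maximal pair $(\Phi^{(1)},\xi^{(1)})=(\Phi^{(2)},\xi^{(2)})$ via the blow-up property, which the paper's proof asserts but does not spell out, and you correctly identify that both arguments tacitly rely on a random-initial-data, time-shifted version of Theorem~\ref{the:strong sol}.
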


\begin{proof}
With the aim of extending the solution to a maximal time of existence, we define the set
\[
	\mathcal{L}:= \{\tau \text{ is a stopping time}: \exists \Phi \text{ such that } (\Phi,\tau) \text{ is a local strong pathwise solution} \}.
\]
The set $\mathcal{L}$ is non-empty by Theorem~\ref{the:strong sol}. More precisely, every stopping time $\tau_n$ defined in Remark \ref{rem1} belongs to $\mathcal L$ and $\xi=\sup_n\tau_n$ is a stopping time.
For such $\tau_n$ and each $n\in \bb{N}$, let $\Phi_n$ denote the process for which $(\Phi_n,\tau_n)$ is a local pathwise strong solution. For $\tau_k<\tau_l$ define
\begin{equation*}
    \Omega_{k,\ell}:= \{\omega\in \Omega: \Phi_k(t\wedge\tau_k) = \Phi_\ell (t\wedge\tau_k),\; \forall t\in [0,\infty) \}
\end{equation*}
By uniqueness (Theorem~\ref{the:unique}), we have $\bb{P}\big[\widetilde{\Omega}\big]=1$, where $\widetilde{\Omega}:= \bigcap_{k,\ell\in \bb{N}} \Omega_{k,\ell}$. For a fixed $\omega\in \widetilde{\Omega}$ and every $t>0$, the sequence $\{\Phi_k(t\wedge\tau_k) \}_{k\in \bb{N}}$ is Cauchy in $\mathcal{V}$. Let
\[
\widetilde{\Phi}(t):= \lim_{k\to\infty} \Phi_k(t\wedge\tau_k) \, \text{ a.s.}
\]
By Lemma~\ref{lem:E abs Phi p} and the monotone convergence theorem, we have for any $T>0$,
\begin{equation}\label{equ:E sup u}
	\bb{E} \left[\sup_{t\in [0, \xi\wedge T]} \abs{\widetilde{\Phi}(t)}^2 + \int_0^{\xi\wedge T} \norm{\widetilde{\Phi}(s)}^2 \ds \right] < \infty.
\end{equation}
Therefore, noting \eqref{equ:E sup u}, we can define $\Phi(t)$ for any $t>0$ by
\begin{align}\label{equ:define weak}
	\inpro{\Phi(t)}{v}
	&= 
	\inpro{\Phi(0)}{v}
	-
	\int_0^{t\wedge\xi} \inpro{\mathcal{A}\widetilde{\Phi} + \mathcal{B}(\widetilde{\Phi})- \mathcal{R}(\widetilde{\Phi})}{v} \ds 
	+
	\int_0^{t\wedge\xi} \sum_{k=1}^\infty \inpro{g_k(\widetilde{\Phi})}{v} \mathrm{d}\beta_k
\end{align}
for all $v\in \mathcal{V}$. Note that $\Phi(t,\omega)=\widetilde{\Phi}(t,\omega)$ for all $t<\xi$ and $\Phi(\cdot)$ is weakly continuous a.s. in $\mathcal{H}$. As such, \eqref{equ:define weak} implies that $(\Phi,\xi)$ is a local weak pathwise solution in the sense of Definition~\ref{def:weak sol}. 

%Now, for any $n\in\bb{N}$ define the stopping time
%\begin{align}\label{equ:rho R xi}
%	\tau_n:= \inf_{t\geq 0} \left\{ \sup_{s\in [0,t]} \norm{\Phi(s)}^2 + \int_0^t \abs{\mathcal{A}\Phi(s)}^2 \ds > R \right\} \wedge \xi.
%\end{align}

In fact, by construction of $\tau_n$ in Remark~\ref{rem1}, $(\Phi,\tau_n)$ is a local strong solution for any $n\in \bb{N}$.
Next, we observe that, by repeating the construction of a local strong pathwise solution, a solution on $[0,\tau_n]$ can be extended to a solution on $[0,\tau_n+\hat{\tau}]$ for an a.s. strictly positive stopping time $\hat{\tau}$. Indeed, we can construct a new solution starting from $\Phi(\tau_n)$ as the initial data as follows. Define a stochastic basis $(\hat{\Omega}, \hat{\mathcal{F}}, \{\hat{\mathcal{F}}_t\}_{t\geq 0}, \hat{\bb{P}}):= (\Omega, \mathcal{F}, \{\mathcal{F}_{\tau_n+t}\}_{t\geq 0}, \bb{P})$ together with a sequence of independent Brownian motions $\{\hat{\beta}_k\}_{k\in \bb{N}}$, where $\hat{\beta}_k(t) := \beta_k (\tau_n+t)- \beta_k(\tau_n)$. Then by the local existence theory, there exists a local strong solution $(\hat{\Phi}, \hat{\tau})$ relative to $\{\hat{\beta}_k\}_{k\in \bb{N}}$ on the stochastic basis $(\hat{\Omega}, \hat{\mathcal{F}}, \{\hat{\mathcal{F}}_t\}_{t\geq 0}, \hat{\bb{P}})$. Since solutions are unique, setting
\begin{equation*}
	\Phi_\mathrm{e} (t):=
	\begin{cases}
		\Phi(t), &\text{if $t\leq \tau_n$,}
		\\
		\hat{\Phi}(t), &\text{if $\tau_n<t\leq \tau_n+\hat{\tau}$.}
	\end{cases}
\end{equation*}
yields a local strong pathwise solution $(\Phi_\mathrm{e}, \sigma)$, defined up to the stopping time $\sigma:=\tau_n+ \hat{\tau}$, relative to the original stochastic basis $(\Omega, \mathcal{F}, \{\mathcal{F}_t\}_{t\geq 0}, \bb{P})$.

Finally, it remains to show that $\bb{P}\left[\tau_n \wedge T= \xi\right]=0$. Suppose otherwise, that for some sufficiently large $n$ and $T$, we have $\bb{P}\left[\tau_n \wedge T= \xi\right]> 0$. The argument in the previous paragraph then confers the existence of a stopping time $\sigma \in \mathcal{L}$ such that $\sigma >\tau_n\wedge T$ a.s., contradicting the maximality of $\xi$ by the definition of $\xi=\sup \mathcal{L}$. We also note that $\{\tau_n\}_{n\in\bb{N}}$ announces $\xi$. This completes the proof of the theorem.
\end{proof}

\subsection{Global strong solution for $d=2$}

The existence of a global strong pathwise solution to \eqref{equ:equation} for $d=2$ will be established in this section.

\begin{theorem}\label{the:global strong 2d}
Let $d=2$. The maximal strong pathwise solution $(\Phi, \xi)$ of \eqref{equ:equation} corresponding to an initial data $\Phi_0\in \mathcal{V}$ is global in the sense of Definition~\ref{def:global strong sol}. In particular, there exists an increasing sequence of strictly positive stopping times $\{\tau_n\}_{n\in\bb{N}}$ with $\tau_n\uparrow \xi$, with~$\bb{P}\left[\xi=\infty\right]=1$. 
\end{theorem}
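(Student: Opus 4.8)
The plan is to show $\bb{P}[\xi=\infty]=1$ by proving $\bb{P}[\xi\le T]=0$ for every fixed $T>0$ and then letting $T\to\infty$. By Definition~\ref{def:global strong sol}, on $\{\xi<\infty\}$ one has $\sup_{t<\xi}\norm{\Phi(t)}^2+\int_0^{\xi}\abs{\mathcal{A}\Phi(s)}^2\,\ds=\infty$ a.s.; since on $\{\xi\le T\}$ this remains true with $\xi$ replaced by $\xi\wedge T$, it suffices to exhibit, for each $T$, localizing stopping times under which $\sup_{t<\xi\wedge T}\norm{\Phi(t)}^2+\int_0^{\xi\wedge T}\abs{\mathcal{A}\Phi(s)}^2\,\ds<\infty$ on an event of probability arbitrarily close to $1$.

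First I would record the energy estimate. Since $(\Phi,\xi)$ is a local weak pathwise solution by Theorem~\ref{the:max strong sol}, Lemma~\ref{lem:E abs Phi p} applied with $\tau=\xi$ gives, for every $p\ge 4$,
\[
\bb{E}\Big[\sup_{t\in[0,\xi\wedge T]}\abs{\Phi(t)}^p\Big]+\bb{E}\Big[\int_0^{\xi\wedge T}\norm{\Phi(s)}^2\abs{\Phi(s)}^{p-2}\,\ds\Big]\le C_p\big(1+\abs{\Phi_0}^p\big).
\]
In particular $\sup_{t<\xi\wedge T}\abs{\Phi(t)}<\infty$ a.s., and taking $p=4,8$ the random variable $\Lambda_T:=\int_0^{\xi\wedge T}\big(1+\abs{\Phi(s)}^2+\abs{\Phi(s)}^6\big)\norm{\Phi(s)}^2\,\ds$ has finite expectation, hence is a.s. finite. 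This is the two-dimensional ingredient that fails when $d=3$.

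Next I would apply It\^o's formula to $\norm{\Phi}^2$ on $[0,\tau_n]$ (justified by the strong-solution regularity~\eqref{equ:reg local strong}), which yields
\[
\mathrm{d}\norm{\Phi}^2+2\abs{\mathcal{A}\Phi}^2\,\dt=\Big(-2\inpro{\mathcal{B}(\Phi)}{\mathcal{A}\Phi}-2\inpro{\mathcal{R}(\Phi)}{\mathcal{A}\Phi}+\sum_{k=1}^\infty\norm{g_k(\Phi)}^2\Big)\dt+2\sum_{k=1}^\infty\inpro{g_k(\Phi)}{\mathcal{A}\Phi}\,\mathrm{d}\beta_k.
\]
Using the $d=2$ cases of~\eqref{equ:est B with A} and~\eqref{equ:est Rv1 v2} with $v_3=\mathcal{A}\Phi$, then Young's inequality and Condition~(G), I would bound $2\abs{\inpro{\mathcal{B}(\Phi)}{\mathcal{A}\Phi}}\le\tfrac14\abs{\mathcal{A}\Phi}^2+C\abs{\Phi}^2\norm{\Phi}^4$, $2\abs{\inpro{\mathcal{R}(\Phi)}{\mathcal{A}\Phi}}\le\tfrac14\abs{\mathcal{A}\Phi}^2+C\norm{\Phi}^2+C\abs{\Phi}^6\norm{\Phi}^4$, and $\sum_k\norm{g_k(\Phi)}^2\le C(1+\norm{\Phi}^2)$. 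Writing the quartic terms as $\abs{\Phi}^2\norm{\Phi}^4=(\abs{\Phi}^2\norm{\Phi}^2)\norm{\Phi}^2$ and $\abs{\Phi}^6\norm{\Phi}^4=(\abs{\Phi}^6\norm{\Phi}^2)\norm{\Phi}^2$ and absorbing the $\abs{\mathcal{A}\Phi}^2$ contributions into the left-hand side, this gives
\[
\mathrm{d}\norm{\Phi}^2+\abs{\mathcal{A}\Phi}^2\,\dt\le\big(C+\phi(t)\norm{\Phi}^2\big)\dt+\mathrm{d}M_t,\qquad \phi(t):=C\big(1+\abs{\Phi(t)}^2\norm{\Phi(t)}^2+\abs{\Phi(t)}^6\norm{\Phi(t)}^2\big),
\]
where $M$ is a local martingale with $\mathrm{d}\langle M\rangle_t\le C\abs{\mathcal{A}\Phi}^2(1+\abs{\Phi}^2)\,\dt$, and where $\int_0^{\xi\wedge T}\phi(s)\,\ds\le C\Lambda_T<\infty$ a.s.

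Finally I would run a localized stochastic Gronwall argument. For $K,R>0$ put $\theta_K:=\inf\{t\ge 0:\int_0^t\phi(s)\,\ds\ge K\}$ and $\rho_R:=\inf\{t\ge 0:\abs{\Phi(t)}\ge R\}$. On $[0,\tau_n\wedge\theta_K\wedge\rho_R]$ the coefficient satisfies the deterministic bound $\int_0^{\cdot}\phi\le K$ while $\abs{\Phi}\le R$ controls $\langle M\rangle$ by $\int\abs{\mathcal{A}\Phi}^2\,\ds$, so Lemma~\ref{lem:gronwall} applies with $u=\norm{\Phi}^2$, $v=\abs{\mathcal{A}\Phi}^2$, $r=\phi$, $w\equiv C$ and gives $\bb{E}\big[\sup_{t\le\tau_n\wedge\theta_K\wedge\rho_R}\norm{\Phi(t)}^2+\int_0^{\tau_n\wedge\theta_K\wedge\rho_R}\abs{\mathcal{A}\Phi}^2\,\ds\big]\le C_{K,R,T}\big(1+\norm{\Phi_0}^2\big)$, uniformly in $n$. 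Letting $n\to\infty$, so that $\tau_n\uparrow\xi$, and using the monotone convergence theorem, the same bound holds with the upper limits replaced by $\xi\wedge\theta_K\wedge\rho_R$; in particular this quantity is a.s. finite. On $\{\xi\le T\}\cap\{\theta_K\ge\xi\}\cap\{\rho_R\ge\xi\}$ one has $\xi\wedge\theta_K\wedge\rho_R=\xi$, so this finiteness contradicts the blow-up defining a maximal solution; hence this event is null and $\bb{P}[\xi\le T]\le\bb{P}[\theta_K<\xi\wedge T]+\bb{P}[\rho_R<\xi\wedge T]$. Since $\int_0^{\xi\wedge T}\phi\,\ds<\infty$ and $\sup_{t<\xi\wedge T}\abs{\Phi(t)}<\infty$ a.s., the right-hand side tends to $0$ as $K,R\to\infty$, so $\bb{P}[\xi\le T]=0$; as $T$ was arbitrary, $\bb{P}[\xi=\infty]=1$ and the announcing sequence $\{\tau_n\}$ satisfies $\tau_n\uparrow\xi=\infty$ a.s. The hard part is not any single estimate but the bookkeeping in this last step: guaranteeing that after localization the Gronwall coefficient admits a \emph{deterministic} bound on the relevant random interval and that the stochastic integral is a genuine martingale there with bracket dominated by $\int\abs{\mathcal{A}\Phi}^2\,\ds$, so that Lemma~\ref{lem:gronwall} is applicable. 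The entire two-dimensional phenomenon is concentrated in the sharper bounds~\eqref{equ:est B with A} and~\eqref{equ:est Rv1 v2} (cases $d=2$) combined with Lemma~\ref{lem:E abs Phi p}.
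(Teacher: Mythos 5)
Your proposal is correct and follows essentially the same route as the paper: the key ingredients in both are the moment bound of Lemma~\ref{lem:E abs Phi p} (with $p=4,8$) to control $\int_0^{\xi\wedge T}\abs{\Phi}^6\norm{\Phi}^2\,\ds$, It\^o's formula for $\norm{\Phi}^2$ combined with the $d=2$ cases of \eqref{equ:est B with A} and \eqref{equ:est Rv1 v2}, and the stochastic Gronwall lemma after localizing so that the Gronwall coefficient is deterministically bounded. The only differences are cosmetic: the paper skips your limit-in-$n$/contradiction endgame by exploiting the normalization $\sup_{t\le\tau_n}\norm{\Phi}^2+\int_0^{\tau_n}\abs{\mathcal{A}\Phi}^2\,\ds=n$ from Remark~\ref{rem1} to bound $\bb{P}[\tau_n\le T]$ directly by Chebyshev with an explicit $1/n$ rate, and it avoids your extra localization $\rho_R$ by estimating the martingale bracket via the $\mathcal{V}$-Lipschitz property of $g$ as $\sum_k\abs{\inpro{g_k(\Phi)}{\mathcal{A}\Phi}}^2\le C\norm{\Phi}^2\big(1+\norm{\Phi}^2\big)$ rather than through $\abs{\mathcal{A}\Phi}^2$.
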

%Furthermore, there exists an increasing sequence of measurable subsets $\{\Omega_\ell\}_{\ell\in\bb{N}}$ with $\Omega_\ell \uparrow \Omega$ such that for any $\ell, m\in \bb{N}$,
%\begin{align*}
%	\lim_{n\to\infty} \bb{E}\left[ \one_{\Omega_\ell} \left(\sup_{t\in [0,\tau_m]} \norm{\Phi^n-\Phi}^2 + \int_0^{\tau_m} \norm{\mathcal{A}(\Phi^n-\Phi)}^2 \ds \right) \right] = 0,
%\end{align*}
%where $\Phi^n$ is the solution of the Galerkin approximation \eqref{equ:galerkin}.

\begin{proof}
Let $\{\tau_n\}_{n\in\bb{N}}$ be a strictly increasing sequence of stopping times announcing $\xi$. Note that
\begin{align}\label{equ:xi less inf}
	\{\xi<\infty\} = \bigcup_{T=1}^\infty \{\xi\leq T\} = \bigcup_{T=1}^\infty \bigcap_{n=1}^\infty \{\tau_n\leq T\}.
\end{align}
Since $\tau_n$ is increasing, we have
\begin{align}\label{equ:tau n less t}
	\bb{P} \left[\bigcap_{n=1}^\infty \{\tau_n\leq T\} \right] 
	=
	\lim_{n\to \infty} \bb{P}\left[\tau_n \leq T\right].
\end{align}
Therefore, to show that $\bb{P}\left[\xi< \infty\right]=0$, by \eqref{equ:xi less inf} and \eqref{equ:tau n less t}, it suffices to show that for any $T\geq 1$,
\begin{equation}\label{equ:lim n P tau n}
	\lim_{n\to \infty} \bb{P}\left[\tau_n\leq T\right] = 0.
\end{equation}
To this end, for any $M>0$, define the stopping time
\begin{align}\label{equ:sigma M}
	\sigma_M:= \inf_{t\geq 0} \left\{\int_0^{t\wedge\xi} \abs{\Phi(s)}^6 \norm{\Phi(s)}^2 \ds > M\right\} \wedge 2T.
\end{align}
Note that
\begin{align}\label{equ:P tau n less t}
	\bb{P} \left[\tau_n\leq T\right]
	\leq
	\bb{P} \left[\sigma_M\leq T\right]
	+
	\bb{P} \left[ \left\{\tau_n\leq T\right\} \cap \left\{\sigma_M>T\right\}\right] 
	 =: I_1+I_2.
\end{align}
The term $I_1$ can be bounded by the Chebyshev inequality as follows:
\begin{align}\label{equ:I1 P}
	I_1
	&\leq
	\bb{P} \left[\int_0^{T\wedge\xi} \abs{\Phi(s)}^6 \norm{\Phi(s)}^2 \ds \geq M\right] 
	\leq
	\frac{1}{M} \bb{E} \left[\int_0^{T\wedge\xi} \abs{\Phi(s)}^6 \norm{\Phi(s)}^2 \ds \right].
\end{align}
Note that the last term tends to zero as $M\to\infty$ by Lemma~\ref{lem:E abs Phi p}. Next, by definition, for the term $I_2$ in \eqref{equ:P tau n less t}, we have
\begin{align}\label{equ:I2 P}
	I_2 &\leq 
	\bb{P} \left[ \left\{ \sup_{t\in [0,\tau_n\wedge T]} \norm{\Phi(t)}^2 + \int_0^{\tau_n \wedge T} \abs{\mathcal{A} \Phi(s)}^2 \ds \geq n \right\} 
	\cap \left\{\sigma_M>T\right\}\right] 
	\nonumber\\
	&\leq
	\bb{P} \left[ \sup_{t\in [0,\tau_n\wedge \sigma_M]} \norm{\Phi(t)}^2 + \int_0^{\tau_n \wedge \sigma_M} \abs{\mathcal{A} \Phi(s)}^2 \ds \geq n \right].
\end{align}
We need to estimate this last probability. Let $T,M$, and $n$ be fixed. Let $\tau_a,\tau_b$ be a pair of stopping times such that $0\leq \tau_a \leq \tau_b \leq \tau_n\wedge \sigma_M$. Applying It\^o's lemma to $\norm{\Phi}^2$ and following the same argument as in~\eqref{equ:sup Phi n V}, we have
\begin{align}\label{equ:sub tau J1 J4}
	&\bb{E} \left[ \sup_{t\in [\tau_a,\tau_b]} \norm{\Phi(t)}^2 \right] 
	+
	2 \bb{E} \left[ \int_{\tau_a}^{\tau_b} \abs{\mathcal{A} \Phi(s)}^2 \ds \right] 
	\nonumber\\
	&\leq
	\bb{E} \left[\norm{\Phi(\tau_a)}^2\right] 
	+
	C \bb{E}\left[\int_{\tau_a}^{\tau_b} \abs{\inpro{\mathcal{B}(\Phi)}{\mathcal{A}\Phi}}\ds \right]
	+
	C \bb{E} \left[\int_{\tau_a}^{\tau_b} \abs{\inpro{\mathcal{R}(\Phi)}{\mathcal{A}\Phi}}\ds \right]
	\nonumber\\
	&\quad
	+
	\bb{E} \left[\int_{\tau_a}^{\tau_b} \sum_{k=1}^\infty \norm{g_k(\Phi)}^2 \ds \right]
	+
	\bb{E} \left[\sup_{t\in [\tau_a, \tau_b]} \abs{\sum_{k=1}^\infty \int_{\tau_a}^t 2\inpro{g_k(\Phi)}{\Phi} \mathrm{d}\beta_k} \right].
	\nonumber\\
	&=: \bb{E} \left[\norm{\Phi(\tau_a)}^2\right] + J_1+J_2+J_3+J_4.
\end{align}
For the term $J_1$, we use \eqref{equ:est B with A} for $d=2$ and Young's inequality to obtain
\begin{align*}
	J_1
	&\leq
	C\bb{E} \left[\int_{\tau_a}^{\tau_b} \abs{\Phi}^{\frac12} \norm{\Phi} \abs{\mathcal{A} \Phi}^{\frac32} \ds \right] 
	\leq
	C \bb{E} \left[ \int_{\tau_a}^{\tau_b} \abs{\Phi}^2 \norm{\Phi}^4 \ds \right] 
	+
	\frac12 \bb{E} \left[\int_{\tau_a}^{\tau_b} \abs{\mathcal{A} \Phi}^2 \ds \right].
\end{align*}
Next, we apply \eqref{equ:est Rv1 v2} for $d=2$ and Young's inequality to estimate $J_2$ as
\begin{align*}
	J_2
	&\leq
	C\bb{E} \left[\int_{\tau_a}^{\tau_b} \left(1+\abs{\Phi}^{\frac32} \abs{\mathcal{A} \Phi}^{\frac12}\right) \norm{\Phi} \abs{\mathcal{A} \Phi} \ds \right]
	\\
	&\leq
	C \bb{E} \left[ \int_{\tau_a}^{\tau_b} \norm{\Phi}^2+ \abs{\Phi}^6 \norm{\Phi}^4 \ds \right] 
	+
	\frac12 \bb{E} \left[\int_{\tau_a}^{\tau_b} \abs{\mathcal{A} \Phi}^2 \ds \right].
\end{align*}
By the assumption on $g$, we also have
\begin{align*}
	J_3 \leq
	C\bb{E}\left[\int_{\tau_a}^{\tau_b} \left(1+\norm{\Phi}^2\right) \ds\right].
\end{align*}
For the last term, by the Burkholder--Davis--Gundy inequality, we have
\begin{align*}
	J_4
	\leq
	\frac12 \bb{E} \left[\sup_{t\in [\tau_a, \tau_b]} \norm{\Phi(t)}^2 \right] 
	+
	C\bb{E} \left[\int_{\tau_a}^{\tau_b} \left(1+\norm{\Phi}^2\right) \ds \right].
\end{align*}
Substituting these estimates into \eqref{equ:sub tau J1 J4} and rearranging the terms, we obtain
\begin{align}\label{equ:E sup norm Phi}
	&\bb{E} \left[ \sup_{t\in [\tau_a,\tau_b]} \norm{\Phi(t)}^2 \right] 
	+
	\bb{E} \left[ \int_{\tau_a}^{\tau_b} \abs{\mathcal{A} \Phi(s)}^2 \ds \right] 
	\nonumber\\
	&\leq
	C \bb{E} \left[\norm{\Phi(\tau_a)}^2\right] 
	+
	C \bb{E} \left[ \int_{\tau_a}^{\tau_b} \left(1+\abs{\Phi}^6 \norm{\Phi}^2 \right) \norm{\Phi}^2 \ds \right].
\end{align}
Note that by the definition of $\sigma_M$ in \eqref{equ:sigma M}, we have
\begin{align*}
	\int_0^{\sigma_M} \abs{\Phi}^6 \norm{\Phi}^2 \ds \leq M, \; \text{ a.s.}
\end{align*}
We apply the Gronwall lemma for stochastic processes (Lemma~\ref{lem:gronwall}) to \eqref{equ:E sup norm Phi} to obtain
\begin{align*}
	&\bb{E} \left[ \sup_{t\in [0,\tau_n\wedge\sigma_M]} \norm{\Phi(t)}^2 + \int_0^{\tau_n \wedge\sigma_M} \abs{\mathcal{A} \Phi(s)}^2 \ds \right] 
	\leq
	C_{T,M} \left(1+ \norm{\Phi_0}^2 \right),
\end{align*}
where $C_{T,M}$ depends on $T$ and $M$, but is independent of $N$. Now, continuing from \eqref{equ:I2 P}, by the Chebyshev inequality we have
\begin{align}\label{equ:I2 P fin}
	I_2
	&\leq
	\frac{1}{n}  \bb{E} \left[ \sup_{t\in [0,\tau_n\wedge\sigma_M]} \norm{\Phi(t)}^2 + \int_0^{\tau_n \wedge\sigma_M} \abs{\mathcal{A} \Phi(s)}^2 \ds \right]
	\leq
	\frac{C_{T,M}}{n} \left(1+ \norm{\Phi_0}^2 \right).
\end{align}
Altogether, from \eqref{equ:P tau n less t}, \eqref{equ:I1 P}, and \eqref{equ:I2 P fin}, we deduce that for any $T\geq 1$,
\begin{align*}
	\bb{P}\left[\tau_n \leq T\right] \leq \frac{1}{M} \bb{E} \left[\int_0^{T\wedge\xi} \abs{\Phi(s)}^6 \norm{\Phi(s)}^2 \ds \right]
	+
	\frac{C_{T,M}}{n} \left(1+ \norm{\Phi_0}^2 \right).
\end{align*}
First, we let $n\to\infty$ to obtain
\begin{align*}
	\lim_{n\to\infty} \bb{P}\left[\tau_n \leq T\right] \leq \frac{1}{M} \bb{E} \left[\int_0^{T\wedge\xi} \abs{\Phi(s)}^6 \norm{\Phi(s)}^2 \ds \right].
\end{align*}
Since this is true for any $M>0$, we conclude that $\lim_{n\to\infty} \bb{P}\left[\tau_n \leq T\right]=0$, thus $\bb{P}\left[\xi< \infty\right]=0$, as required in \eqref{equ:lim n P tau n}.
\end{proof}

\appendix

\section{Auxiliary results} \label{sec:app}

For convenience, we state several auxiliary results that are used in the paper. The first is a compactness theorem for stochastic processes satisfying a certain Cauchy criterion~\cite{GlaZia09, MikRoz04}. The second is a generalised Poincar\'e-type inequalities, while the third is a general lemma concerning weak convergence in Banach spaces. The last result is a Gronwall-type lemma for stochastic processes~\cite{GlaZia09}.

\begin{theorem}[\cite{GlaZia09, MikRoz04}]\label{the:compactness}
Suppose that $(X, \abs{\cdot}_X)$ and $(Y, \abs{\cdot}_Y)$ are Banach spaces with continuous embedding $Y\subset X$. Define the space
\[
\mathcal{E}(T):= C([0,T]; X) \cap L^2(0,T; Y)
\]
with the norm
\[
\abs{v}_{\mathcal{E}(T)}:= \left( \sup_{t\in [0,T]} \abs{v(t)}_X^2 + \int_0^T \abs{v(t)}_Y^2 \dt \right)^{\frac12}.
\]
Let $\{v_n\}_{n\in\bb{N}}$ be a sequence of $Y$-valued stochastic processes so that for every $T>0$, $v_n\in \mathcal{E}(T)$ a.s. For any $M>1$ and $T>0$, define a collection of stopping times
\begin{align*}
	\mathcal{T}_{n}^{M,T}:= \left\{\tau\leq T: \abs{v_n}_{\mathcal{E}(\tau)} \leq M + \abs{v_n(0)}_X\right\},
\end{align*}
and let $\mathcal{T}_{n,m}^{M,T}:= \mathcal{T}_n^{M,T} \cap \mathcal{T}_m^{M,T}$.

\begin{enumerate}[(i)]
\item Suppose that, for some $M>1$ and $T>0$, we have
\begin{align}
	\label{equ:cauchy 1}
	\lim_{n\to \infty} \sup_{m\geq n} \,\sup_{\tau \in \mathcal{T}_{m,n}^{M,T}} \bb{E}\left[ \abs{v_n-v_m}_{\mathcal{E}(\tau)} \right] &= 0,
	\\
	\label{equ:cauchy 2}
	\lim_{S\to 0} \sup_{n\in \bb{N}} \,\sup_{\tau \in \mathcal{T}_{n}^{M,T}}
	\bb{P} \left[ \abs{v_n}_{\mathcal{E} (\tau \wedge S)}^2 > \abs{v_n(0)}_{X}^2 + (M-1)^2 \right] &= 0.
\end{align}
Then there exists a stopping time $\tau$ with $\bb{P}(0<\tau\leq T)=1$, a process $v$ with $v(\,\cdot\,\wedge\tau) \in \mathcal{E}(\tau)$, and a subsequence $\{v_{n_l}\}$ such that
\begin{align*}
	&\abs{v_{n_l}-v}_{\mathcal{E}(\tau)} \to 0,\; \text{ a.s. when $l \to \infty$,}
	\\
	&\abs{v}_{\mathcal{E}(\tau)} \leq M + \sup_{n\in \bb{N}} \abs{v_n(0)}_X \; \text{ a.s.}.
\end{align*}
\item If, in addition to~\eqref{equ:cauchy 1} and \eqref{equ:cauchy 2}, we also have
\begin{align}
	\label{equ:cauchy 3}
	\sup_{n\in \bb{N}} \bb{E}\left[ \abs{v_n(0)}_X^p \right] < \infty \; \text{ for some $p\in [1,\infty)$},
\end{align}
then 
\begin{align}\label{equ:cauchy 4}
	\bb{E}\left[ \abs{v}_{\mathcal{E}(\tau)}^p \right] &\leq C_p \left(M^p + \sup_n \bb{E}\left[ \abs{v_n(0)}_X^p \right] \right).
\end{align}
Furthermore, there exists a sequence of measurable sets $\Omega_l \uparrow \Omega$ such that
\begin{align}\label{equ:cauchy 5}
	\sup_{l\in \bb{N}} \bb{E}\left[\one_{\Omega_l} \abs{v_{n_l}}_{\mathcal{E}(\tau)}^p \right] &< \infty,
	\\
	\label{equ:lim cauchy 6}
	\lim_{l\to\infty} \bb{E}\left[\one_{\Omega_l} \abs{v_{n_l}-v}_{\mathcal{E}(\tau)}^p \right] &= 0.
\end{align}
\end{enumerate}
\end{theorem}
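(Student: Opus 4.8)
The plan is to follow the standard construction underlying such abstract Cauchy/compactness criteria (as in \cite{GlaZia09}): exhibit a single stopping time $\tau$ and a subsequence along which the $v_n$ converge almost surely in the pathwise Banach space $\mathcal{E}(\tau)$, using \eqref{equ:cauchy 1} to produce the Cauchy property and \eqref{equ:cauchy 2} solely to keep $\tau$ strictly positive.

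First I would record the maximality structure of the families $\mathcal{T}_n^{M,T}$. Since $t\mapsto \abs{v_n}_{\mathcal{E}(t)}$ is nondecreasing and continuous, the exit time $\theta_n:=\inf\{t\ge 0:\abs{v_n}_{\mathcal{E}(t)}\ge M+\abs{v_n(0)}_X\}\wedge T$ is the largest element of $\mathcal{T}_n^{M,T}$, and $\theta_m\wedge\theta_n\in\mathcal{T}_{m,n}^{M,T}$. Feeding $\tau=\theta_n$ into \eqref{equ:cauchy 2} and splitting on $\{\theta_n<S\}$ and $\{\theta_n\ge S\}$, one checks that the event in \eqref{equ:cauchy 2} contains $\{\theta_n<S\}$, because at the exit time the squared norm equals $(M+\abs{v_n(0)}_X)^2>\abs{v_n(0)}_X^2+(M-1)^2$ for $M>1$. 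Hence \eqref{equ:cauchy 2} upgrades to the uniform non-degeneracy
\begin{align*}
\lim_{S\to 0}\ \sup_{n\in\bb{N}}\ \bb{P}\big[\theta_n<S\big]=0,
\end{align*}
which is the only role that \eqref{equ:cauchy 2} will play.

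Next, from \eqref{equ:cauchy 1} I would extract a subsequence $(n_l)$ with $\bb{E}\big[\abs{v_{n_l}-v_{n_{l+1}}}_{\mathcal{E}(\theta_{n_l}\wedge\theta_{n_{l+1}})}\big]\le 4^{-l}$, using $\theta_{n_l}\wedge\theta_{n_{l+1}}\in\mathcal{T}_{n_l,n_{l+1}}^{M,T}$; Chebyshev and Borel--Cantelli then make $\sum_l\abs{v_{n_l}-v_{n_{l+1}}}_{\mathcal{E}(\theta_{n_l}\wedge\theta_{n_{l+1}})}$ finite almost surely. The crux is to convert this into convergence on a \emph{fixed} stopping time. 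The plan is to show that the exit times $\theta_{n_l}$ converge almost surely to a stopping time $\tau$ with $\tau\le\theta_{n_l}$ along the (possibly re-thinned) subsequence, so that $\abs{\cdot}_{\mathcal{E}(\tau)}\le\abs{\cdot}_{\mathcal{E}(\theta_{n_l}\wedge\theta_{n_{l+1}})}$ and $(v_{n_l})$ becomes pathwise Cauchy in the complete space $\mathcal{E}(\tau(\omega))$; its limit defines $v$ and yields $\abs{v_{n_l}-v}_{\mathcal{E}(\tau)}\to 0$ a.s. Strict positivity $\bb{P}(0<\tau\le T)=1$ is precisely where the non-degeneracy above enters: by the Portmanteau theorem $\bb{P}(\tau<S)\le\liminf_l\bb{P}(\theta_{n_l}<S)\le\sup_n\bb{P}(\theta_n<S)\to 0$ as $S\to 0$, forcing $\bb{P}(\tau=0)=0$. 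This construction of $\tau$ — simultaneously a.s.\ positive, dominated by the exit times (so that the energy bounds transfer), and making $\mathcal{E}(\tau)$ complete pathwise — is the main obstacle, since exit times are not continuous functionals of the path and the naive infimum $\inf_l\theta_{n_l}$ may genuinely vanish with positive probability. The bound $\abs{v}_{\mathcal{E}(\tau)}\le M+\sup_n\abs{v_n(0)}_X$ in part (i) then follows from $\abs{v_{n_l}}_{\mathcal{E}(\tau)}\le M+\abs{v_{n_l}(0)}_X$ and lower semicontinuity of the norm under a.s.\ convergence.

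For part (ii), under \eqref{equ:cauchy 3} the pointwise bound $\abs{v_{n_l}}_{\mathcal{E}(\tau)}\le M+\abs{v_{n_l}(0)}_X$ gives $\sup_l\bb{E}\big[\abs{v_{n_l}}_{\mathcal{E}(\tau)}^p\big]\le C_p\big(M^p+\sup_n\bb{E}\abs{v_n(0)}_X^p\big)$, and Fatou applied to the a.s.\ convergence yields \eqref{equ:cauchy 4}; this uniform $L^p$ bound already gives \eqref{equ:cauchy 5} for any choice of $\Omega_l$. To obtain \eqref{equ:lim cauchy 6} at the endpoint exponent $p$, I would set $G:=\sup_l\abs{v_{n_l}}_{\mathcal{E}(\tau)}$, which is finite a.s.\ because $(v_{n_l})$ converges, and take $\Omega_l:=\{G\le\gamma_l\}$ for $\gamma_l\uparrow\infty$ chosen to increase slowly enough. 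Then $\Omega_l\uparrow\Omega$, on $\Omega_l$ one has $\abs{v_{n_l}-v}_{\mathcal{E}(\tau)}^p\le(2\gamma_l)^p$, and since $\abs{v_{n_l}-v}_{\mathcal{E}(\tau)}\to 0$ a.s., a diagonal/dominated-convergence argument (tuning $\gamma_l$ against the rate at which $\bb{E}[\abs{v_{n_l}-v}_{\mathcal{E}(\tau)}^p\wedge A]\to 0$ for each fixed $A$) gives $\bb{E}\big[\one_{\Omega_l}\abs{v_{n_l}-v}_{\mathcal{E}(\tau)}^p\big]\to 0$, which is \eqref{equ:lim cauchy 6}.
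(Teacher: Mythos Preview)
The paper does not prove this theorem: it is stated in the appendix purely as a quoted auxiliary result from \cite{GlaZia09, MikRoz04}, with no accompanying argument. There is therefore no ``paper's own proof'' to compare against; your proposal is in effect a sketch of how the cited references proceed.

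That said, your outline tracks the strategy of \cite{GlaZia09} reasonably well at the level of the overall architecture (extract the maximal stopping times $\theta_n$, use \eqref{equ:cauchy 2} to get $\sup_n\bb{P}[\theta_n<S]\to 0$, use \eqref{equ:cauchy 1} to pull out a rapidly Cauchy subsequence, then pass to a limit on a common stopping time), but the step you yourself flag as ``the main obstacle'' is genuinely unresolved in your write-up. You assert that the $\theta_{n_l}$ ``converge almost surely to a stopping time $\tau$ with $\tau\le\theta_{n_l}$'', yet give no mechanism for this; as you note, $\inf_l\theta_{n_l}$ need not be positive, and exit times are not continuous in the path. In \cite{GlaZia09} this is handled by a quantitative comparison: closeness of $v_{n_l}$ and $v_{n_{l+1}}$ in $\mathcal{E}(\theta_{n_l}\wedge\theta_{n_{l+1}})$, together with continuity of $t\mapsto\abs{v_n}_{\mathcal{E}(t)}$ and the fact that the norm equals $M+\abs{v_n(0)}_X$ at the exit time, forces $\abs{\theta_{n_l}-\theta_{n_{l+1}}}$ to be controlled, so that (after possibly thinning further) the $\theta_{n_l}$ form a Cauchy sequence of stopping times whose limit $\tau$ is strictly positive by the non-degeneracy estimate. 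Without this comparison lemma your argument has a real gap at exactly the point you identify.

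Your treatment of part~(ii) is essentially correct in spirit: the uniform pathwise bound $\abs{v_{n_l}}_{\mathcal{E}(\tau)}\le M+\abs{v_{n_l}(0)}_X$ plus \eqref{equ:cauchy 3} gives \eqref{equ:cauchy 4} and \eqref{equ:cauchy 5} directly, and a truncation/diagonal argument of the type you describe yields \eqref{equ:lim cauchy 6}.
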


\begin{lemma}[\cite{GlaZia09}]  \label{lem:weak conv}
Suppose that $X$ is a separable Banach space and let $Y\subset X$ be a dense subset. Let $X'$ be the dual of $X$ and denote the dual pairing between $u\in X$ and $v\in X'$ by $\inpro{u}{v}$. Assume that $(E,\mathcal{E},\mu)$ is a finite measure space. Let $p\in (1,\infty)$. Suppose that $\{v_n\}_{n\in\bb{N}}$ is a bounded sequence in $L^p(E,X')$ and 
\begin{equation}\label{equ:A2}
	\inpro{v_n}{y} \to \inpro{v}{y}, \; \text{ $\mu$-a.e. for all $y\in Y$.}
\end{equation}
Then $v_n \overset{\ast}{\rightharpoonup} v$ in $L^p(E,X')$, i.e., \eqref{equ:A2} holds for all $y\in X.$
\end{lemma}

\begin{lemma}[Poincar\'e-type inequalities]\label{lem:poincare}
	Let $\mathcal{H}$ and $\mathcal{A}$ be defined as in Subsection~\ref{subsec:fun set}. Let $\mathcal{H}_n:= \text{span}\{e_1,e_2,\ldots,e_n\}$, where $\{e_k\}_{k\in \bb{N}}$ is an orthonormal basis for $\mathcal{H}$ consisting of eigenfunctions of $\mathcal{A}$. Let $P_n:\mathcal{H}\to \mathcal{H}_n$ be the projection operator and let $Q_n=I-P_n$, where $I$ is the identity operator. Then for any real numbers $\alpha_1$ and $\alpha_2$, $\alpha_1<\alpha_2$, if $v\in \mathrm{D}(\mathcal{A}^{\alpha_2})$, then
	\begin{align}\label{equ:poincare Qn}
		\abs{Q_n v}_{\alpha_1} &\leq \lambda_n^{\alpha_1-\alpha_2} \abs{Q_n v}_{\alpha_2},
		\\
		\label{equ:Poincare Pn}
		\abs{P_n v}_{\alpha_2} &\leq \lambda_n^{\alpha_2-\alpha_1} \abs{P_n v}_{\alpha_1}.
	\end{align}
\end{lemma}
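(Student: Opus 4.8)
The plan is to reduce both inequalities to the spectral representation of $v$ in the eigenbasis $\{e_k\}$ and then exploit the monotonicity of the eigenvalues $\{\lambda_k\}$ together with the sign of the exponent $\alpha_1-\alpha_2$. First I would write, for any $v\in\mathrm{D}(\mathcal{A}^{\alpha_2})$, the Fourier coefficients $c_k:=\inpro{v}{e_k}_{\mathcal{H}}$, so that by the very definition of the fractional-power norm $\abs{Q_n v}_{\alpha_1}^2=\sum_{k>n}\lambda_k^{2\alpha_1}\abs{c_k}^2$ and $\abs{Q_n v}_{\alpha_2}^2=\sum_{k>n}\lambda_k^{2\alpha_2}\abs{c_k}^2$, since $Q_n v=\sum_{k>n}c_k e_k$ retains only the high modes; the analogous identities hold for $P_n v=\sum_{k\le n}c_k e_k$ with the sums taken over $k\le n$.

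For \eqref{equ:poincare Qn} I would factor each summand as $\lambda_k^{2\alpha_1}=\lambda_k^{2(\alpha_1-\alpha_2)}\lambda_k^{2\alpha_2}$. Since the sum runs over $k>n$ we have $\lambda_k\ge\lambda_n$, and because $\alpha_1-\alpha_2<0$ the map $t\mapsto t^{2(\alpha_1-\alpha_2)}$ is decreasing, giving $\lambda_k^{2(\alpha_1-\alpha_2)}\le\lambda_n^{2(\alpha_1-\alpha_2)}$ for every such $k$. Inserting this bound into each term and summing yields $\abs{Q_n v}_{\alpha_1}^2\le\lambda_n^{2(\alpha_1-\alpha_2)}\sum_{k>n}\lambda_k^{2\alpha_2}\abs{c_k}^2=\lambda_n^{2(\alpha_1-\alpha_2)}\abs{Q_n v}_{\alpha_2}^2$, and taking square roots gives the claim.

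The proof of \eqref{equ:Poincare Pn} is entirely symmetric: I would write $\lambda_k^{2\alpha_2}=\lambda_k^{2(\alpha_2-\alpha_1)}\lambda_k^{2\alpha_1}$, note that now the sum is over $k\le n$ so $\lambda_k\le\lambda_n$, and use $\alpha_2-\alpha_1>0$ (hence $t\mapsto t^{2(\alpha_2-\alpha_1)}$ is increasing) to get $\lambda_k^{2(\alpha_2-\alpha_1)}\le\lambda_n^{2(\alpha_2-\alpha_1)}$. Summing over $k\le n$ then gives $\abs{P_n v}_{\alpha_2}^2\le\lambda_n^{2(\alpha_2-\alpha_1)}\abs{P_n v}_{\alpha_1}^2$, as required.

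There is no substantial obstacle here: the only points requiring care are bookkeeping ones, namely tracking that the high modes ($k>n$) satisfy $\lambda_k\ge\lambda_n$ while the low modes ($k\le n$) satisfy $\lambda_k\le\lambda_n$, and pairing each case with the correct sign of the relevant exponent so that the monotone bound points in the right direction. Finiteness of all the sums is guaranteed by the hypothesis $v\in\mathrm{D}(\mathcal{A}^{\alpha_2})$, which controls the $\alpha_2$-norms; the $\alpha_1$-norm of $Q_n v$ is then automatically finite, being dominated by a constant multiple of the $\alpha_2$-norm through the estimate itself, so no convergence issues arise.
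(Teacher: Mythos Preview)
Your proof is correct. The paper actually states this lemma in the appendix as a standard auxiliary result without providing a proof, so there is no argument to compare against; the spectral-expansion approach you outline is exactly the natural (and standard) way to establish such Poincar\'e-type bounds for fractional powers of $\mathcal{A}$.
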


\begin{lemma}[Gronwall-type lemma for stochastic processes~\cite{GlaZia09}]\label{lem:gronwall}
Let $T>0$ be fixed. Suppose that $b,u,v,w: [0,T)\times \Omega\to\bb{R}$ are real-valued, non-negative stochastic processes. Let $\tau<T$ be a stopping time such that
\begin{align*}
	\bb{E}\left[ \int_0^\tau (b(t)u(t)+w(t))\, \dt \right] <\infty.
\end{align*}
Moreover, assume that for a fixed constant $\kappa$,
\begin{align*}
	\int_0^\tau b(t)\,\dt \leq \kappa, \; \text{ a.s.}
\end{align*}
Suppose that there exists a constant $C_0$ such that for all stopping times $\tau_a,\tau_b$ with $0\leq \tau_a < \tau_b \leq \tau$, we have
\begin{align*}
	\bb{E}\left[ \sup_{s\in [\tau_a,\tau_b]} u(s) + \int_{\tau_a}^{\tau_b} v(t)\,\dt\right] \leq C_0 \bb{E}\left[ u(\tau_a)+ \int_{\tau_a}^{\tau_b} (b(t)u(t)+w(t))\, \dt \right].
\end{align*}
Then
\begin{align*}
	\bb{E}\left[ \sup_{s\in [0,\tau]} u(s) + \int_0^\tau v(t)\,\dt\right] 
	\leq C\bb{E}\left[ u(0)+ \int_0^\tau w(t)\,\dt \right],
\end{align*}
where $C:= C(C_0,T,\kappa)$.
\end{lemma}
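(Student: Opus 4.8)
The plan is to prove the assertion by a \emph{stopping-time partition} argument that reduces the random, nonautonomous factor $r$ to a controlled, essentially deterministic number of ``short'' subintervals on which the term $\int ru\,\dt$ can be absorbed into the left-hand side. The essential difficulty is that $r$ is random and only its \emph{pathwise} integral is controlled, through $\int_0^\tau r\,\dt\le\kappa$ a.s.; one cannot invoke a deterministic Gronwall inequality directly, nor naively move the expectation inside the hypothesis. Throughout I take $C_0\ge 1$ without loss of generality, since enlarging $C_0$ only weakens the hypothesis.

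First I would record the key absorption inequality on a short subinterval. Suppose $\tau_a,\tau_b$ are stopping times with $0\le\tau_a\le\tau_b\le\tau$ and, crucially, $\int_{\tau_a}^{\tau_b} r\,\dt\le \tfrac{1}{2C_0}$ a.s. Then from the pathwise bound $\int_{\tau_a}^{\tau_b} r u\,\dt\le\big(\sup_{[\tau_a,\tau_b]}u\big)\int_{\tau_a}^{\tau_b}r\,\dt\le\tfrac{1}{2C_0}\sup_{[\tau_a,\tau_b]}u$ and the structural hypothesis, taking expectations and using the global finiteness $\bb{E}[\int_0^\tau(ru+w)\,\dt]<\infty$ to justify the subtraction, I would absorb $\tfrac12\bb{E}[\sup_{[\tau_a,\tau_b]}u]$ on the left to obtain
\[
\bb{E}\Big[\sup_{s\in[\tau_a,\tau_b]}u(s)+\int_{\tau_a}^{\tau_b}v\,\dt\Big]\le 2C_0\,\bb{E}\Big[u(\tau_a)+\int_{\tau_a}^{\tau_b}w\,\dt\Big].
\]

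Next I would construct the partition realising this smallness. Setting $R(t):=\int_0^t r\,\ds$ (adapted and continuous, so the hitting times below are stopping times), define $\rho_0:=0$ and $\rho_{k+1}:=\inf\{t>\rho_k: R(t)-R(\rho_k)\ge \tfrac{1}{2C_0}\}\wedge\tau$. Every completed subinterval consumes exactly $\tfrac{1}{2C_0}$ of the budget, so the number of completed subintervals is at most $2C_0\kappa$; consequently $\rho_N=\tau$ a.s. for the \emph{deterministic} index $N:=\lceil 2C_0\kappa\rceil+1$, and each piece satisfies $\int_{\rho_k}^{\rho_{k+1}}r\,\dt\le\tfrac{1}{2C_0}$. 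Writing $a_k:=\bb{E}[u(\rho_k)]$ and $W_k:=\bb{E}[\int_{\rho_k}^{\rho_{k+1}}w\,\dt]$, the displayed subinterval estimate together with $a_{k+1}\le\bb{E}[\sup_{[\rho_k,\rho_{k+1}]}u]$ gives $a_{k+1}\le 2C_0 a_k+2C_0 W_k$, whence by iteration $a_k\le(2C_0)^k\big(a_0+\sum_j W_j\big)\le(2C_0)^k\big(a_0+\bb{E}[\int_0^\tau w\,\dt]\big)$ for all $k\le N$.

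Finally I would sum the subinterval estimates over $k=0,\dots,N-1$. Since $\sup_{[0,\tau]}u\le\sum_k\sup_{[\rho_k,\rho_{k+1}]}u$ and $\int_0^\tau v\,\dt=\sum_k\int_{\rho_k}^{\rho_{k+1}}v\,\dt$, combining with the bound on $a_k$ and $\sum_k W_k\le\bb{E}[\int_0^\tau w\,\dt]$ yields the claimed inequality with a constant of the form $C=2C_0\sum_{k=0}^{N-1}(2C_0)^k+2C_0$, depending only on $C_0$ and $\kappa$ (hence in particular on $C_0,T,\kappa$). The one point requiring care is integrability bookkeeping: I would dispose of the trivial case $\bb{E}[u(0)]+\bb{E}[\int_0^\tau w\,\dt]=\infty$ at the outset, and otherwise invoke the global hypothesis $\bb{E}[\int_0^\tau(ru+w)\,\dt]<\infty$ to guarantee inductively that each $a_k$ and each $\bb{E}[\sup_{[\rho_k,\rho_{k+1}]}u]$ is finite, so that every absorption is legitimate. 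I expect this absorption-plus-finite-partition mechanism---rather than any single estimate---to be the crux, as it is precisely what converts the pathwise budget $\kappa$ on $r$ into a finite, $\omega$-independent number of absorbable steps.
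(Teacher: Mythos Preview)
The paper does not supply its own proof of this lemma: it is stated in the appendix as an auxiliary result and attributed to~\cite{GlaZia09}. Your argument is correct and is precisely the stopping-time partition scheme used in that reference---define hitting times $\rho_k$ so that each subinterval carries at most $\tfrac{1}{2C_0}$ of the $r$-budget, absorb the $ru$ term on each piece, and iterate over the resulting deterministic number $N=\lceil 2C_0\kappa\rceil+1$ of steps; your integrability bookkeeping (disposing of the case where the right-hand side is infinite, and using the hypothesis $\bb{E}[\int_0^\tau(ru+w)\,\dt]<\infty$ to ensure each $\bb{E}[\sup_{[\rho_k,\rho_{k+1}]}u]$ is finite before subtracting) is exactly the care the original proof requires.
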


\section*{Statements and declarations}

\subsection*{Conflict of interest}
On behalf of all authors, the corresponding author states that there is no conflict of interest.

\subsection*{Funding statement}
The authors acknowledge financial support through the Australian Research Council's Discovery Projects funding scheme (projects DP200101866 and DP220101811).

Agus L. Soenjaya is supported by the Australian Government Research Training Program (RTP) Scholarship awarded at the University of New South Wales, Sydney.

\subsection*{Data availability}
No data are associated with this article.

%%%%%%%%%%%%%%% Bibliography %%%%%%%%%%%%%%%%%%

\newcommand{\noopsort}[1]{}\def\cprime{$'$}
\def\soft#1{\leavevmode\setbox0=\hbox{h}\dimen7=\ht0\advance \dimen7
	by-1ex\relax\if t#1\relax\rlap{\raise.6\dimen7
		\hbox{\kern.3ex\char'47}}#1\relax\else\if T#1\relax
	\rlap{\raise.5\dimen7\hbox{\kern1.3ex\char'47}}#1\relax \else\if
	d#1\relax\rlap{\raise.5\dimen7\hbox{\kern.9ex \char'47}}#1\relax\else\if
	D#1\relax\rlap{\raise.5\dimen7 \hbox{\kern1.4ex\char'47}}#1\relax\else\if
	l#1\relax \rlap{\raise.5\dimen7\hbox{\kern.4ex\char'47}}#1\relax \else\if
	L#1\relax\rlap{\raise.5\dimen7\hbox{\kern.7ex
			\char'47}}#1\relax\else\message{accent \string\soft \space #1 not
		defined!}#1\relax\fi\fi\fi\fi\fi\fi}


\begin{thebibliography}{10}
	
	\bibitem{AceAmrCon19}
	P.~Acevedo, C.~Amrouche, C.~Conca, and A.~Ghosh.
	\newblock Stokes and {N}avier-{S}tokes equations with {N}avier boundary
	condition.
	\newblock {\em C. R. Math. Acad. Sci. Paris},  {\bf 357} (2019), 115--119.
	
	\bibitem{AgrBerSat15}
	R.~Agroum, C.~Bernardi, and J.~Satouri.
	\newblock Spectral discretization of the time-dependent {N}avier-{S}tokes
	problem coupled with the heat equation.
	\newblock {\em Appl. Math. Comput.},  {\bf 268} (2015), 59--82.
	
	\bibitem{AmiLepOta25}
	D.~Amigo, F.~Lepe, E.~Ot\'arola, and G.~Rivera.
	\newblock A virtual element method for a convective {B}rinkman-{F}orchheimer
	problem coupled with a heat equation.
	\newblock {\em Comput. Math. Appl.},  {\bf 191} (2025), 1--23.
	
	\bibitem{AmrRej14}
	C.~Amrouche and A.~Rejaiba.
	\newblock {$L^p$}-theory for {S}tokes and {N}avier-{S}tokes equations with
	{N}avier boundary condition.
	\newblock {\em J. Differential Equations},  {\bf 256} (2014), 1515--1547.
	
	\bibitem{AngCamCau23}
	L.~Angelo, J.~Cama\~no, and S.~Caucao.
	\newblock A five-field mixed formulation for stationary magnetohydrodynamic
	flows in porous media.
	\newblock {\em Comput. Methods Appl. Mech. Engrg.},  {\bf 414} (2023), Paper
	No. 116158, 30.
	
	\bibitem{Bis03}
	D.~Biskamp.
	\newblock {\em Magnetohydrodynamic turbulence}.
	\newblock Cambridge University Press, Cambridge, 2003.
	
	\bibitem{BreFeiHof18}
	D.~Breit, E.~Feireisl, and M.~Hofmanov\'a.
	\newblock {\em Stochastically forced compressible fluid flows}, volume~3 of
	{\em De Gruyter Series in Applied and Numerical Mathematics}.
	\newblock De Gruyter, Berlin, 2018.
	
	\bibitem{CaiJiu08}
	X.~Cai and Q.~Jiu.
	\newblock Weak and strong solutions for the incompressible {N}avier-{S}tokes
	equations with damping.
	\newblock {\em J. Math. Anal. Appl.},  {\bf 343} (2008), 799--809.
	
	\bibitem{CatEmoWei03}
	F.~Cattaneo, T.~Emonet, and N.~Weiss.
	\newblock On the interaction between convection and magnetic fields.
	\newblock {\em The Astrophysical Journal},  {\bf 588} (2003), 1183.
	
	\bibitem{CatHug06}
	F.~Cattaneo and D.~W. Hughes.
	\newblock Dynamo action in a rotating convective layer.
	\newblock {\em J. Fluid Mech.},  {\bf 553} (2006), 401--418.
	
	\bibitem{ChuMil10}
	I.~Chueshov and A.~Millet.
	\newblock Stochastic 2{D} hydrodynamical type systems: well posedness and large
	deviations.
	\newblock {\em Appl. Math. Optim.},  {\bf 61} (2010), 379--420.
	
	\bibitem{DonWanWu19}
	B.-Q. Dong, W.~Wang, J.~Wu, Z.~Ye, and H.~Zhang.
	\newblock Global regularity for a class of 2{D} generalized tropical climate
	models.
	\newblock {\em J. Differential Equations},  {\bf 266} (2019), 6346--6382.
	
	\bibitem{DonWuYe19}
	B.-Q. Dong, J.~Wu, and Z.~Ye.
	\newblock Global regularity for a 2{D} tropical climate model with fractional
	dissipation.
	\newblock {\em J. Nonlinear Sci.},  {\bf 29} (2019), 511--550.
	
	\bibitem{DuaMil09}
	J.~Duan and A.~Millet.
	\newblock Large deviations for the {B}oussinesq equations under random
	influences.
	\newblock {\em Stochastic Process. Appl.},  {\bf 119} (2009), 2052--2081.
	
	\bibitem{Els56}
	W.~M. Elsasser.
	\newblock Hydromagnetic dynamo theory.
	\newblock {\em Rev. Mod. Phys.},  {\bf 28} (1956), 135--163.
	
	\bibitem{Eri66}
	A.~C. Eringen.
	\newblock Theory of micropolar fluids.
	\newblock {\em J. Math. Mech.},  {\bf 16} (1966), 1--18.
	
	\bibitem{FefHajRob22}
	C.~L. Fefferman, K.~W. Hajduk, and J.~C. Robinson.
	\newblock Simultaneous approximation in {L}ebesgue and {S}obolev norms via
	eigenspaces.
	\newblock {\em Proc. Lond. Math. Soc. (3)},  {\bf 125} (2022), 759--777.
	
	\bibitem{FoiManTem87}
	C.~Foias, O.~Manley, and R.~Temam.
	\newblock Attractors for the {B}\'enard problem: existence and physical bounds
	on their fractal dimension.
	\newblock {\em Nonlinear Anal.},  {\bf 11} (1987), 939--967.
	
	\bibitem{FriMajPau04}
	D.~M.~W. Frierson, A.~J. Majda, and O.~M. Pauluis.
	\newblock Large scale dynamics of precipitation fronts in the tropical
	atmosphere: a novel relaxation limit.
	\newblock {\em Commun. Math. Sci.},  {\bf 2} (2004), 591--626.
	
	\bibitem{Gal11}
	G.~P. Galdi.
	\newblock {\em An introduction to the mathematical theory of the
		{N}avier-{S}tokes equations}.
	\newblock Springer Monographs in Mathematics. Springer, New York, second
	edition, 2011.
	\newblock Steady-state problems.
	
	\bibitem{GalPad90}
	G.~P. Galdi and M.~Padula.
	\newblock A new approach to energy theory in the stability of fluid motion.
	\newblock {\em Arch. Rational Mech. Anal.},  {\bf 110} (1990), 187--286.
	
	\bibitem{GaoLiu19b}
	H.~Gao and H.~Liu.
	\newblock Stochastic 3{D} {N}avier-{S}tokes equations with nonlinear damping:
	martingale solution, strong solution and small time {LDP}.
	\newblock In {\em Stochastic {PDE}s and modelling of multiscale complex
		system}, volume~20 of {\em Interdiscip. Math. Sci.}, pages 9--36. World Sci.
	Publ., Hackensack, NJ, 2019.
	
	\bibitem{GaoLiu19}
	H.~Gao and H.~Liu.
	\newblock Well-posedness and invariant measures for a class of stochastic 3{D}
	{N}avier-{S}tokes equations with damping driven by jump noise.
	\newblock {\em J. Differential Equations},  {\bf 267} (2019), 5938--5975.
	
	\bibitem{GlaZia09}
	N.~Glatt-Holtz and M.~Ziane.
	\newblock Strong pathwise solutions of the stochastic {N}avier-{S}tokes system.
	\newblock {\em Adv. Differential Equations},  {\bf 14} (2009), 567--600.
	
	\bibitem{HajRob17}
	K.~W. Hajduk and J.~C. Robinson.
	\newblock Energy equality for the 3{D} critical convective
	{B}rinkman-{F}orchheimer equations.
	\newblock {\em J. Differential Equations},  {\bf 263} (2017), 7141--7161.
	
	\bibitem{His91}
	T.~Hishida.
	\newblock Existence and regularizing properties of solutions for the
	nonstationary convection problem.
	\newblock {\em Funkcial. Ekvac.},  {\bf 34} (1991), 449--474.
	
	\bibitem{IftRau01}
	D.~Iftimie and G.~Raugel.
	\newblock Some results on the {N}avier-{S}tokes equations in thin 3{D} domains.
	\newblock {\em J. Differential Equations},  {\bf 169} (2001), 281--331.
	
	\bibitem{InoMatOta03}
	H.~Inoue, K.~Matsuura, and M.~\^Otani.
	\newblock Strong solutions of magneto-micropolar fluid equation.
	\newblock {\em Discrete Contin. Dyn. Syst.},  {\bf 1} (2003), 439--448.
	
	\bibitem{KalLanLuk19}
	P.~Kalita, J.~A. Langa, and G.~{\L}ukaszewicz.
	\newblock Micropolar meets {N}ewtonian. {T}he {R}ayleigh-{B}\'enard problem.
	\newblock {\em Phys. D},  {\bf 392} (2019), 57--80.
	
	\bibitem{KinMoh23}
	K.~Kinra and M.~T. Mohan.
	\newblock Large time behavior of deterministic and stochastic 3{D} convective
	{B}rinkman-{F}orchheimer equations in periodic domains.
	\newblock {\em J. Dynam. Differential Equations},  {\bf 35} (2023), 2355--2396.
	
	\bibitem{Kun25}
	P.~C. Kunstmann.
	\newblock {$H^\infty$}-calculus for the {S}tokes operator with {H}odge,
	{N}avier, and {R}obin boundary conditions on unbounded domains.
	\newblock arXiv:2504.18895, 2025.
	
	\bibitem{KunWei17}
	P.~C. Kunstmann and L.~Weis.
	\newblock New criteria for the {$H^\infty$}-calculus and the {S}tokes operator
	on bounded {L}ipschitz domains.
	\newblock {\em J. Evol. Equ.},  {\bf 17} (2017), 387--409.
	
	\bibitem{LiTi16}
	J.~Li and E.~Titi.
	\newblock Global well-posedness of strong solutions to a tropical climate
	model.
	\newblock {\em Discrete Contin. Dyn. Syst.},  {\bf 36} (2016), 4495--4516.
	
	\bibitem{LiKim24}
	K.-O. Li, Y.-H. Kim, Y.-N. Kim, and S.-I. O.
	\newblock Local and global strong solutions to the 3{D} {N}avier-{S}tokes
	equations with damping.
	\newblock {\em J. Evol. Equ.},  {\bf 24} (2024), Paper No. 60, 20.
	
	\bibitem{LiuGao18}
	H.~Liu and H.~Gao.
	\newblock Ergodicity and dynamics for the stochastic 3{D} {N}avier-{S}tokes
	equations with damping.
	\newblock {\em Commun. Math. Sci.},  {\bf 16} (2018), 97--122.
	
	\bibitem{MikRoz04}
	R.~Mikulevicius and B.~L. Rozovskii.
	\newblock Stochastic {N}avier-{S}tokes equations for turbulent flows.
	\newblock {\em SIAM J. Math. Anal.},  {\bf 35} (2004), 1250--1310.
	
	\bibitem{Moh21}
	M.~T. Mohan.
	\newblock Wentzell-{F}reidlin large deviation principle for stochastic
	convective {B}rinkman-{F}orchheimer equations.
	\newblock {\em J. Math. Fluid Mech.},  {\bf 23} (2021), Paper No. 62, 44.
	
	\bibitem{Mot22}
	E.~Motyl.
	\newblock Stochastic magneto-hydrodynamic equations ({MHD}): invariant measures
	in 2{D} {P}oincar\'e{} domains.
	\newblock {\em J. Math. Anal. Appl.},  {\bf 514} (2022), Paper No. 126317, 79.
	
	\bibitem{PriFor00}
	E.~Priest and T.~Forbes.
	\newblock {\em Magnetic reconnection}.
	\newblock Cambridge University Press, Cambridge, 2000.
	
	\bibitem{Roj97}
	M.~A. Rojas-Medar.
	\newblock Magneto-micropolar fluid motion: existence and uniqueness of strong
	solution.
	\newblock {\em Math. Nachr.},  {\bf 188} (1997), 301--319.
	
	\bibitem{SaySemTri21}
	T.~Sayah, G.~Semaan, and F.~Triki.
	\newblock Finite element methods for the {D}arcy-{F}orchheimer problem coupled
	with the convection-diffusion-reaction problem.
	\newblock {\em ESAIM Math. Model. Numer. Anal.},  {\bf 55} (2021), 2643--2678.
	
	\bibitem{SerTem83}
	M.~Sermange and R.~Temam.
	\newblock Some mathematical questions related to the {MHD} equations.
	\newblock {\em Comm. Pure Appl. Math.},  {\bf 36} (1983), 635--664.
	
	\bibitem{SriSun99}
	S.~S. Sritharan and P.~Sundar.
	\newblock The stochastic magneto-hydrodynamic system.
	\newblock {\em Infin. Dimens. Anal. Quantum Probab. Relat. Top.},  {\bf 2}
	(1999), 241--265.
	
	\bibitem{Tem01}
	R.~Temam.
	\newblock {\em Navier-{S}tokes equations}.
	\newblock AMS Chelsea Publishing, Providence, RI, 2001.
	\newblock Theory and numerical analysis, Reprint of the 1984 edition.
	
	\bibitem{TitTra19}
	E.~S. Titi and S.~Trabelsi.
	\newblock Global well-posedness of a 3{D} {MHD} model in porous media.
	\newblock {\em J. Geom. Mech.},  {\bf 11} (2019), 621--637.
	
	\bibitem{TitTra24}
	E.~S. Titi and S.~Trabelsi.
	\newblock Global well-posedness of a three-dimensional
	{B}rinkman-{F}orchheimer-{B}\'enard convection model in porous media.
	\newblock {\em Discrete Contin. Dyn. Syst. Ser. S},  {\bf 17} (2024),
	1857--1875.
	
	\bibitem{Tob21}
	S.~M. Tobias.
	\newblock The turbulent dynamo.
	\newblock {\em J. Fluid Mech.},  {\bf 912} (2021), Paper No. P1, 76.
	
	\bibitem{XuQiaZha21}
	F.~Xu, L.~Qiao, and M.~Zhang.
	\newblock On the well-posedness for the 2{D} micropolar {R}ayleigh-{B}\'enard
	convection problem.
	\newblock {\em Z. Angew. Math. Phys.},  {\bf 72} (2021), Paper No. 17, 13.
	
	\bibitem{Yam05}
	N.~Yamaguchi.
	\newblock Existence of global strong solution to the micropolar fluid system in
	a bounded domain.
	\newblock {\em Math. Methods Appl. Sci.},  {\bf 28} (2005), 1507--1526.
	
	\bibitem{Yam14}
	K.~Yamazaki.
	\newblock 3-{D} stochastic micropolar and magneto-micropolar fluid systems with
	non-{L}ipschitz multiplicative noise.
	\newblock {\em Commun. Stoch. Anal.},  {\bf 8} (2014), 413--437.
	
	\bibitem{Yam16m}
	K.~Yamazaki.
	\newblock Exponential convergence of the stochastic micropolar and
	magneto-micropolar fluid systems.
	\newblock {\em Commun. Stoch. Anal.},  {\bf 10} (2016), Article 2, 271--295.
	
	\bibitem{Yam16}
	K.~Yamazaki.
	\newblock Global martingale solution to the stochastic nonhomogeneous
	magnetohydrodynamics system.
	\newblock {\em Adv. Differential Equations},  {\bf 21} (2016), 1085--1116.
	
	\bibitem{Ye17}
	Z.~Ye.
	\newblock Global regularity for a class of 2{D} tropical climate model.
	\newblock {\em J. Math. Anal. Appl.},  {\bf 446} (2017), 307--321.
	
	\bibitem{YinYan23}
	X.~Yin and L.~Yan.
	\newblock Local well-posedness for 2{D} stochastic tropical climate model.
	\newblock {\em Discrete Contin. Dyn. Syst. Ser. B},  {\bf 28} (2023),
	5037--5054.
	
	\bibitem{YuaChe21}
	B.~Yuan and X.~Chen.
	\newblock Global regularity for the 3{D} tropical climate model with damping.
	\newblock {\em Appl. Math. Lett.},  {\bf 121} (2021), Paper No. 107439, 6.
	
	\bibitem{Zia98}
	M.~Ziane.
	\newblock On the two-dimensional {N}avier-{S}tokes equations with the free
	boundary condition.
	\newblock {\em Appl. Math. Optim.},  {\bf 38} (1998), 1--19.
	
\end{thebibliography}
\end{document}